\documentclass[11pt,a4paper]{article}
\usepackage{amsmath,amssymb,latexsym,amsthm}
\usepackage[english]{babel}
\usepackage[latin2]{inputenc}

\usepackage{amsmath,amsfonts,amssymb}

\begin{document}

\title{$3$-dimensional  Bol loops corresponding to  solvable Lie triple systems}
\author{\'Agota Figula} 
\date{}
\maketitle

\let\cal\mathcal

\newtheorem*{Dfn}{Definition}
\newtheorem{Theo}{Theorem}
\newtheorem{Lemma}[Theo]{Lemma}
\newtheorem{Prop}[Theo]{Proposition}

\theoremstyle{definition}

\newtheorem{rema}{Remark}

\allowdisplaybreaks

\newcommand\bsin{\operatorname{\mathbf{sin}}}
\newcommand\bcos{\operatorname{\mathbf{cos}}}

\newcommand\arc{\operatorname{arc}}
\newcommand\ctg{\operatorname{ctg}}

\begin{abstract} We classify the connected $3$-dimensional differentiable Bol loops $L$ having a solvable Lie group as the
 group topologically generated by the left translations of $L$ using $3$-dimensional solvable Lie
triple systems. Together with \cite{figula} our results complete the
classification of all $3$-dimensional differentiable Bol loops.
\end{abstract}

\noindent
{\footnotesize {2000 {\em Mathematics Subject Classification:} 22A30, 20N05, 53C30, 53C35.}}

\noindent
{\footnotesize {{\em Key words and phrases:} differentiable Bol loops, Lie triple systems, Bol algebras, enveloping algebras, homogeneous manifolds. }} 

\noindent
{\footnotesize {{\em Thanks: } This paper has been supported by DAAD.} }

\section{Introduction}

The present research on  differentiable loops is focused  to
such loops  which have local forms determined in a unique way by their  tangential objects.
The most important and most studied class of differentiable loops  are the
Bol loops. Their tangential objects, the Bol algebras, may  be seen as  Lie triple systems
with an additional binary operation (cf.\ \cite{loops} pp.\ 84--86, Def.\ 6.10).
As known the Lie triple systems are in one-to-one correspondence to (global)
simply connected symmetric spaces (cf.\ \cite{symmetric}, \cite{loops} Section~6).
Hence there is a strong connection between the theory of differentiable Bol
loops and the theory of symmetric spaces. In particular the theory of  connected differentiable Bruck loops
(which form a subclass of the class of Bol loops) is essentially the theory of affine
symmetric spaces (cf.\ \cite{loops} Section~11).

The $2$-dimensional differentiable Bol loops are classified in \cite{loops} (Section~25). My goal is to classify
differentiable multiplications satisfying the left Bol identity on  $3$-dimensional connected manifolds
since these manifolds also play an exceptional role.

The $3$-dimensional differentiable Bol loops having a  non-solvable Lie
group as the group topologically generated by the left translations  have
been determined  in \cite{figula}. In this paper I classify all  $3$-dimensional
connected differentiable (global) Bol loops in  which the left translations generate
a solvable Lie group. Since for differentiable Bol loops the group topologically generated by the
left translations is always a Lie group with the results of this paper the
classification of  $3$-dimensional differentiable Bol loops is complete.

We treat the differentiable Bol loops as images of global differentiable sections $\sigma :G/H \to G$, where $G$
is a connected Lie group, $H$ is a closed subgroup containing no non-trivial normal subgroup of $G$ and
 for all $r,s \in \sigma (G/H)$ the element $rsr$ lies in $\sigma (G/H)$.
In this treatment  the exponential images of Lie triple systems form local Bol loops.
Hence for the classification of $3$-dimensional differentiable Bol loops $L$ having a solvable Lie group $G$
as the group topologically generated by the left translations we proceed in the following way: First
we determine all solvable $3$-dimensional Lie triple systems ${\bf m}$ and all enveloping Lie
algebras ${\mathbf g}$ of ${\bf m}$. We show that ${\bf g}$ and  therefore the solvable Lie group $G$ topologically
generated by the left translations of a differentiable Bol loop has dimension four or  five.
Then we find for any pair $({\bf g},{\bf m})$ all subalgebras
 ${\bf h}$ containing no non-trivial ideal of ${\bf g}$ such that
${\bf g}={\bf m} \oplus {\bf h}$ and  we prove  that global Bol loops $L$ correspond
precisely to those exponential images of ${\bf m}$, which form a system of
representatives for the cosets of $\exp {\bf h}$ in $G$.

If the group $G$ is nilpotent then $G$ is the $4$-dimensional non-decompo\-sab\-le nilpotent Lie group and
the corresponding $3$-dimensional Bol loops form only one isotopism class containing precisely two
isomorphism classes (Theorem 4, Section~5.1).

If the solvable Lie group $G$ is $4$-dimensional and not nilpotent then it is a central extension of a
$1$-dimensional Lie group $N$ either by the $3$-dimensional solvable Lie group $G_1$ with precisely two
$1$-dimensional normal subgroups or by the direct product $G_2$ of $\mathbb R$ and the $2$-dimensional
non-abelian Lie group. All loops $L$ corresponding to the extensions of $N$ by $G_1$ are extensions
of $N$ by a loop isotopic to the pseudo-euclidean plane loop (Theorem 6 in Section~5.2 and Theorem 9 in Section~5.3).
The $3$-dimensional Bol loops having the  central extension of $\mathbb R$ by $G_2$ as the group topologically generated
by their left translations are all isomorphic (Theorem 6 in Section~5.2).

If the solvable Lie group $G$ is $5$-dimensional then it is either a semidirect product $G$ of
$\mathbb R^4$ by the group $S=\mathbb R$ such that either no element of $S$ different from the identity
has a real eigenvalue in $\mathbb R^4$ or such that $G$ has a $1$-dimensional centre and precisely
two $1$-dimensional non-central normal subgroups. We prove that for  both groups  $G$  there exist infinitely many
non-isotopic $3$-dimensional differentiable Bol loops corresponding to $G$ (Theorem 7 in Section~5.2 and Theorem 11
in Section~6).

The variety of the $3$-dimensional differentiable Bol loops having a solvable
Lie group as the group topologically generated by their left translations
contains families of loops depending on up to four real parameters.
The size of this variety is so enormous that a classification of
$4$-dimensional differentiable Bol loops having a solvable Lie group as the
group generated  by the left translations seems to be not attainable.

\section{Some basic notions of the theory of Bol loops}

A set $L$ with a binary operation $(x,y) \mapsto x \cdot y$ is called a loop if there exists an element $e \in L$
such that $x=e \cdot x=x \cdot e$ holds for all $x \in L$ and the equations $a \cdot y=b$ and $x \cdot a=b$ have
precisely one solution which we denote by $y=a \backslash b$ and $x=b/a$. The left translation $\lambda _a: y
\mapsto a \cdot y :L \to L$ is a bijection of $L$ for any $a \in L$. Two loops $(L_1, \circ )$ and $(L_2, \ast )$
are called isotopic if there are three bijections $\alpha ,\beta ,\gamma : L_1 \to L_2 $ such that $\alpha (x)
\ast \beta (y)=\gamma (x \circ y)$ holds for any $x,y \in L_1$. Isotopy is an equivalence relation. If $\alpha
=\beta =\gamma $ then the isotopic loops $(L_1, \circ )$ and $(L_2, \ast )$ are called isomorphic. Let $(L_1,
\cdot )$ and $(L_2, \ast )$ be two loops. The set $L=L_1 \times L_2= \{ (a,b) \mid a \in L_1,\ b \in L_2 \}$ with
the componentwise multiplication  is again a loop, which is called the direct product of $L_1$ and $L_2$, and the
loops $(L_1, \cdot )$,  $(L_2, \ast )$ are subloops of $L$.

A loop $L$ is called a Bol loop if for any two left translations
$\lambda _a, \lambda _b$ the product $\lambda _a \lambda _b \lambda _a $ is
again a left translation of $L$. If $L_1$ and $L_2$ are Bol loops, then the
direct product $L_1 \times L_2$ is again a Bol loop.

If the elements of $L$ are points of a differentiable manifold and the operations $(x,y) \mapsto x \cdot y$,
$(x,y) \mapsto x/y$, $ (x,y) \mapsto x \backslash y :L \times L \to L$ are differentiable mappings then $L$ is called a
differentiable loop.

If $L$ is a connected differentiable Bol loop then the group $G$ topologically generated by the left translations
is a connected Lie group (cf.\ \cite{loops}, p.\ 33; \cite{quasigroups}, pp.\ 414--416).

Every connected differentiable Bol loop is isomorphic to a  loop $L$ realized
on the factor space $G/H$, where $G$ is a connected Lie group, $H$ is a
connected closed subgroup containing no normal subgroup $\neq \{1\}$ of $G$
and $\sigma : G/H \to G$ is  a differentiable section with $\sigma (H)=1 \in G$ such that the
subset $\sigma (G/H)$ generates $G$  and for all $r,s \in \sigma (G/H)$ the
element $rsr$ is contained in $\sigma (G/H)$ (cf.\ \cite{loops}, p.~18 and Lemma 1.3, p.\ 17, \cite{kiechle},
Corollary 3.11, p.\ 51). The multiplication  of   $L$ on the factor space $G/H$  is  defined by
$x H \ast y H=\sigma (x H) y H$.

Let  $L_1$ be a loop in the factor space  $G/H$ with respect to the section  $\sigma :G/H \to G$. The loops $L_2$
isomorphic to $L_1$ and having the same set of left translations $\sigma (G/H)$ and the same group $G$ as the group
generated by  $\sigma (G/H)$ correspond to automorphisms $\alpha $ of $G$, which leave $\sigma (G/H)$ invariant. The loop
$L_2$ corresponding to $\alpha $ is realized on $G/ \alpha(H)$ such that the multiplication of $L_2$ is given by
$x  \alpha(H) \ast y \alpha(H)=[\alpha \circ \sigma \circ \alpha _H^{-1}(x \alpha(H))] y \alpha(H)$, where the mapping
$\alpha _H:G/H \to G/ \alpha(H)$ is defined by $kH \to \alpha (k) \alpha(H)$.   Moreover, let $L$ and $L'$
be loops having the same group $G$ generated by their left translations.
Then $L$ and $L'$ are isotopic if and only if there is a loop $L''$
isomorphic to $L'$ having $G$ again as the group generated by its left
translations such that there exists an inner automorphism $\tau $ of $G$
mapping the stabilizer $H''$ of $e'' \in L''$ onto the stabilizer $H$ of
$e \in L$ (cf.\ \cite{loops}, Theorem 1.11, p.\ 21).

\medskip
A real vector space $V$ with a trilinear multiplication $(.\,,.\,,.\,)$ is called a
Lie triple system $\mathcal{V}$, if the following identities are satisfied:
\begin{align}
&(X,X,Y) =0  \\
&(X,Y,Z)+(Y,Z,X)+(Z,X,Y)=0 \\
&( X,Y,(U,V,W) )=( (X,Y,U),V,W ) \nonumber \\
&\hskip 2.85cm +(U,(X,Y,V),W ) +( U,V,(X,Y,W) ). \label{eq:sec2} \end{align}
A Bol algebra $A$ is a Lie triple system $(V,(.\,,.\,,.\,))$ with a bilinear
skew-symmetric operation $[[.\,,.\,]]$, $(X,Y) \mapsto [[X,Y]]:V \times V \to V$
such that the following identity is satisfied:
\begin{multline*}
[[(X,Y,Z),W]]-[[(X,Y,W),Z]]+\big(Z,W,[[X,Y]]\big) \\[1mm]
-\big(X,Y,[[Z,W]]\big)+\big[ \big[ [[X,Y]],[[Z,W]] \big ] \big]=0.  \end{multline*}
With any connected differentiable Bol loop $L$ we can associate a Bol algebra
in the following way: Let $G$ be the Lie group topologically generated by the
 left translations of $L$, and let $(\bf{g, [.\,,.\,]}) $ be the Lie algebra of
$G$. Denote by $\bf{h}$ the Lie algebra of the stabilizer $H$ of the identity $e \in L$ in
$G$ and by $\bf{m} $ $=T_1 \sigma (G/H)$ the tangent space at $1 \in G$ of
the image of the section $\sigma :G/H \to G$ corresponding to the Bol loop  $L$.
Then $\bf{g}=\bf{m} \oplus \bf{h}$, $\big[ [\bf{m},\bf{m}], \bf{m} \big] \subseteq \bf{m}$ and ${\bf m}$
generates the Lie algebra ${\bf g}$. The subspace $\bf{m}$ with the operations
defined by $(X,Y,Z) \mapsto \big[ [X,Y], Z \big]$, $(X,Y) \mapsto [X,Y]_{\bf{m}}$, where $X$, $Y$, $Z$
are elements of  $\bf{m}$ and $Z \mapsto Z_{\bf{m} }: \bf{g} \to \bf{m}$ is the projection of $\bf{g}$ onto $\bf{m}$ along
$\bf{h}$, is the Bol algebra of $L$. The Lie algebra $\bf{g}$ of  $G$ is isomorphic to an enveloping Lie algebra
of the Lie triple system ${\bf m}$ corresponding to  $L$.

An imbedding $T$ of a Lie triple system $\mathcal{V}$ into  a Lie algebra $\mathcal{L}^T$ is a
linear mapping $X \mapsto X^T$ of $\mathcal{V}$ into $\mathcal{L}^T$ such that
\begin{itemize}
\item[(i)]  $(X,Y,Z)^T=[[X^T,Y^T],Z^T]$ holds for all $X,Y,Z \in \mathcal{V}$ and
\item[(ii)]
the image $\mathcal{V}^T$ generates $\mathcal{L}^T$.
\end{itemize}
The Lie algebra  $\mathcal{L}^T$ is called  enveloping Lie algebra of the imbedding $T$. An imbedding $U$ of a Lie triple
system $\mathcal{V}$ is called universal and $\mathcal{L}^U=\mathcal{V}^U \oplus [\mathcal{V}^U,\mathcal{V}^U]$ is a
universal Lie algebra of $\mathcal{V}$ if and only if, for every imbedding $T$ of  $\mathcal{V}$ the mapping
$X^U \mapsto X^T$ is single-valued and can be extended to a Lie algebra homomorphism of $\mathcal{L}^U$ onto
$\mathcal{L}^T$ (\cite{jacobson2}, p.\ 519, and \cite{structure}, p.\ 219).

In \cite{jacobson2} (pp.\ 517--518) it is shown  that for every Lie triple system  $\mathcal{V}$
there exists a  particular imbedding $S$ such that $\sum  _i [X_i^S,Y_i^S]=0$ for
 $X_i, Y_i \in \mathcal{V}$ if and only if $\sum _i(X_i,Y_i,Z)=0$ for every $Z \in \mathcal{V}$. Moreover
 $\mathcal{L}^S=\mathcal{V}^S \oplus [\mathcal{V}^S,\mathcal{V}^S]$. This imbedding is called
the standard imbedding of $\mathcal{V}$ and the Lie algebra  $\mathcal{L}^S$ is the smallest
enveloping algebra. Using the standard imbedding the existence and the uniqueness of a universal imbedding $U$ of
every Lie triple system $\mathcal{V}$ follows (\cite{jacobson2}, p.\ 519). Moreover  if $\mathcal{V}$ is a $n$-dimensional
Lie triple system then the universal Lie algebra $\mathcal{L}^U$ of  $\mathcal{V}$  and therefore every
enveloping Lie algebra  $\mathcal{L}^T$ of  $\mathcal{V}$ has dimension at least $n$ and at most $n(n+1)/2$.

\medskip
A loop $L$ is called a left A-loop if each $\lambda _{x,y}= \lambda_{xy}^{-1}\lambda_x \lambda_y:L \to L$ is an
automorphism of $L$. If $L$ is a differentiable left A-loop then the group $G$
topologically generated by its left translations  is a Lie group
(cf.\ \cite{loops}, Proposition 5.20, p.\ 75). If  ${\bf g}$ is  the Lie algebra of $G$ and  ${\bf h}$ is  the Lie
algebra of the stabilizer $H$ of the identity $e \in L$ in $G$ then one has
$ {\bf m} \oplus {\bf h}= {\bf g}$ and $[{\bf h},{\bf m}] \subseteq {\bf m}$, where ${\bf m}$ is the tangent space $T_e L$
(cf.\ \cite{loops}, Definition 5.18. and Proposition 5.20.  pp.\ 74--75).

A  differentiable loop $L$ is called a Bruck loop if there is an involutory automorphism $\sigma $ of the Lie algebra
${\bf g}$ of the connected Lie group $G$ generated by the left translations of $L$ such that the tangent space
$T_e(L)={\bf m}$ is the $-1$-eigenspace and the Lie algebra ${\bf h}$ of the
stabilizer $H$ of $e \in L$ in $G$ is the $+1$-eigenspace of $\sigma $.

Let $L_1$ be a loop defined on the factor space $G_1/H_1$ with respect to a section
$\sigma _1:G_1/H_1 \to G_1$ the image of which is the set $M_1 \subset G_1$. Let $G_2$ be a  group, let
$\varphi :H_1 \to G_2$ be a homomorphism and $(H_1, \varphi (H_1))=\{(x, \varphi (x)); x \in H_1\}$. A
loop $L$ is called a Scheerer extension of $G_2$ by $L_1$ if $L$ is defined on the factor space
 $(G_1 \times G_2)/(H_1, \varphi (H_1))$ with respect to the section
 $\sigma:(G_1 \times G_2)/(H_1, \varphi (H_1)) \to G_1 \times G_2$ the image of which is the set $M_1 \times G_2$
 (\cite{loops}, Section~2).

\smallskip
From  \cite{figula} we will use often the following fact:

\begin{Lemma}
Let $L$ be a differentiable global loop and denote by ${\bf m}$ the tangent space of
$T_1 \sigma (G/H)$, where $\sigma :G/H \to G$ is the section corresponding to $L$. Then ${\bf m}$ does not contain any
 element of $Ad_{g^{-1}} {\bf h}=g {\bf h} g^{-1}$ for some $g \in G$. Moreover, every element of $G$ can be written
 uniquely as a product of an element of $\sigma (G/H)$ with an element of $H$.
\end{Lemma}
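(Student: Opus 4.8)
The plan is to prove the two assertions of the lemma in turn, starting from the defining property of the loop section, namely that $\sigma(G/H)$ is a system of representatives of the left cosets $gH$ which is stable under the triple product $r,s\mapsto rsr$.

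\medskip
First I would establish the second, more elementary claim: every $g\in G$ can be written uniquely as a product $g = rh$ with $r\in M:=\sigma(G/H)$ and $h\in H$. Since $M$ is the image of the section $\sigma$, each coset $gH$ contains exactly one element of $M$, namely $r = \sigma(gH)$; then $h = r^{-1}g$ is the unique element of $H$ with $g = rh$. This is just a restatement of the fact that a section picks exactly one representative from each coset, but I would spell it out because it is used repeatedly.

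\medskip
For the first claim I would argue by contradiction. Suppose some $0\neq X\in{\bf m}$ lay in $g{\bf h}g^{-1}$ for some $g\in G$, i.e.\ $X = \mathrm{Ad}_{g}(Y)$ with $Y\in{\bf h}$, $Y\neq 0$. The element $X$, being in ${\bf m}=T_1\sigma(G/H)$, is tangent to the submanifold $M=\sigma(G/H)$ at $1$; hence the one-parameter subgroup $\exp(tX)$ is contained in $M$ for $t$ in a neighbourhood of $0$ (here one uses that $M$ is locally the exponential image of ${\bf m}$ and is closed under $r\mapsto r^2$, so small powers and hence $\exp(tX)$ for small $t$ stay in $M$; this is exactly the local Bol loop structure recalled in the text). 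On the other hand $\exp(tX) = \exp(t\,\mathrm{Ad}_g Y) = g\exp(tY)g^{-1}$, and $\exp(tY)\in H$ since $Y\in{\bf h}$. Thus $g\exp(tY)g^{-1}\in M$ for all small $t$. Now I would use the coset-representative property: conjugation by $g$ permutes the cosets, and I want to derive that a whole arc of $M$ lies in a single conjugate of $H$, which forces that arc to be trivial. More precisely, $M\cap gHg^{-1}$ is the $\mathrm{Ad}_g$-image of $M'\cap H$ where $M' = g^{-1}Mg$ is the representative set of the loop obtained by conjugating by $g$; but $M'\cap H = \{1\}$ because $M'$, being a transversal, meets the subgroup $H$ in exactly the identity coset's representative, which is $1$. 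Therefore $g\exp(tY)g^{-1}=1$ for all small $t$, so $Y=0$, a contradiction. Alternatively, and perhaps more cleanly in this setting, I would invoke the characterization of isotopy via inner automorphisms stated just before the lemma: the subset $g{\bf h}g^{-1}$ is the Lie algebra of the stabilizer of the identity of an isotopic loop realized on $G/gHg^{-1}$ with the conjugated section $g\mapsto g^{-1}\sigma(\cdot)g$, and the statement "${\bf m}\cap{\bf h}=0$" for any loop (which holds because ${\bf g}={\bf m}\oplus{\bf h}$ is a direct sum) applied to this isotopic loop gives ${\bf m}'\cap{\bf h}'=0$; transporting back yields ${\bf m}\cap g{\bf h}g^{-1}=0$ provided one checks ${\bf m}$ is carried into itself, which is where the stability of $\sigma(G/H)$ under $r\mapsto rsr$ enters.

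\medskip
The main obstacle I anticipate is the passage from the infinitesimal statement ($X\in{\bf m}\cap\mathrm{Ad}_g{\bf h}$) to a genuine contradiction at the group level: one must be careful that $\exp(tX)$ really does stay inside the \emph{global} section image $\sigma(G/H)$ and not merely inside some local piece, and that $M\cap gHg^{-1}$ is genuinely a single point rather than a positive-dimensional set. Handling this rigorously requires the \emph{global} hypothesis on $L$ (that $\sigma$ is defined on all of $G/H$ and its image is exactly a transversal closed under the triple product), together with the uniqueness-of-decomposition fact proved in the first step; the local Bol-loop/exponential description alone would not suffice. Once that decomposition is in hand, the rest is a short formal argument.
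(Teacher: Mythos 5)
The paper does not actually reprove this lemma here (it imports it from \cite{figula}), so your argument has to stand on its own. Your proof of the second assertion is correct and is indeed just the transversal property of a section: $\sigma(gH)$ is the unique element of $M=\sigma(G/H)$ lying in the coset $gH$, and then $h=\sigma(gH)^{-1}g$ is forced.

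For the first assertion there is a genuine gap, and it sits exactly at the step ``$\exp(tX)$ is contained in $M$ for $t$ near $0$.'' That does not follow from ${\bf m}=T_1M$: a submanifold through $1$ is in general not locally the exponential image of its tangent space, and the lemma is stated for an arbitrary differentiable global loop, not only for Bol loops. Even in the Bol case the inclusion $\exp({\bf m})\subseteq M$ is a substantive theorem (local uniqueness of the Bol loop determined by its Bol algebra, plus the twisted-subgroup structure of $M$); the sentence you lean on from the introduction asserts that $\exp$ of a Lie triple system \emph{is} the section of some local Bol loop, not that every global loop's section contains it. Two smaller inaccuracies: $M'=g^{-1}Mg$ is a transversal for the cosets of $g^{-1}Hg$, not of $H$, so ``$M'\cap H=\{1\}$ because $M'$ is a transversal'' is not right as stated --- the correct route to $M\cap gHg^{-1}=\{1\}$ is sharp transitivity (the equation $xH\ast gH=gH$ has the unique solution $xH=H$); and your fallback via isotopes founders on the very point you flag, since conjugation does not carry ${\bf m}=T_1M$ to the tangent space of the isotope's section at \emph{its} identity. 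Note finally that the set-theoretic fact $M\cap gHg^{-1}=\{1\}$ can never by itself yield the tangent-space statement: two submanifolds may meet in a single point and still be tangent there. The repair is more elementary than your route and needs no Bol hypothesis: since right division is differentiable, the evaluation map $ev_g\colon M\to G/H$, $m\mapsto m\,gH$, is a diffeomorphism (its inverse is $bH\mapsto\sigma\bigl((bH)/(gH)\bigr)$); its differential at $1$ is the restriction to ${\bf m}$ of the differential of the orbit map $x\mapsto x\,gH$, whose kernel is the Lie algebra $g{\bf h}g^{-1}$ of the stabilizer of $gH$, so injectivity gives ${\bf m}\cap g{\bf h}g^{-1}=\{0\}$.
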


\section{3-dimensional solvable Lie triple systems}

Let $({\bf m}, [[.\,,.\,],.\,])$ be a Lie triple system and let  $({\bf g}^*, [.\,,.\,])$ be
the standard enveloping Lie algebra of $({\bf m}, [[.\,,.\,],.\,])$  (\cite{structure}, p.\ 219).
The isomorphism classes of the $3$-dimensional solvable Lie triple systems and their standard
enveloping Lie algebras may be classified as follows:

\medskip
\noindent
{\bf 1.} If the Lie triple system ${\bf m}$ is abelian then it is the $3$-dimensional abelian Lie
algebra, which is also the standard enveloping Lie algebra of ${\bf m}$ (see Theorem 4.1,
Type I in \cite{bouetou1}).

\medskip
\noindent
{\bf 2.} Since a $3$-dimensional Lie triple system cannot have a $2$-dimensional centre we consider now the case that
${\bf m}$ has a $1$-dimensional centre $\langle e_1 \rangle$. Then the
factor Lie triple system ${\bf m}/ \langle e_1 \rangle$ is $2$-dimensional and according to
\cite{locally} (pp.\ 44--45) it is either abelian or satisfies one of the following relations:
\[
(i) \quad   [[e_2,e_3],e_3]= e_2, \ \qquad  \ (ii) \quad [[e_2,e_3],e_3]=- e_2. \]
It follows that  for ${\bf m}$ and for the corresponding Lie algebra ${\bf g}^*$ we have the following possibilities.

\medskip
\noindent
{\bf 2 a.} If ${\bf m}/ \langle e_1 \rangle$ is abelian then we have $ [[e_2,e_3],e_2]= e_1$, since
${\bf m}$ is not abelian. This Lie triple system is isomorphic to the Lie triple system
belonging to the relation $ [[e_2,e_3],e_3]= e_1$ under the isomorphism $\alpha $ given by
$\alpha (e_1)=e_1$, $\alpha (e_2)=e_3$, $\alpha (e_3)=-e_2$ (see Theorem 4.1, Type II in
\cite{bouetou1}). Then the Lie algebra ${\bf g}^*$ is defined by the following non-trivial relations
\[
[e_2,e_3]=e_4, \quad  [e_4,e_3]=e_1. \]
According to \cite{morozov} (p.\ 162) this is the unique
 $4$-dimensional  nilpotent Lie algebra with $2$-dimensional commutator algebra.

\medskip 
{\bf 2 b.}  The Lie triple system is the direct product of $ \langle e_1 \rangle$ with the $2$-dimensional Lie
triple system satisfying in {\bf 2} either (i) or (ii) respectively.
Using the isomorphism $\alpha $ given by $\alpha (e_1)=e_3$, $\alpha (e_2)=e_1$,
$\alpha (e_3)=e_2$ the Lie triple system with the relation (i) changes into the Lie triple
system ${\bf m}^{+} \times  \langle e_3 \rangle$ satisfying $[[e_1,e_2],e_2]= e_1$ (Type III
in \cite{bouetou1}) and the Lie triple system with the relation (ii) becomes  the Lie triple
system ${\bf m}^{-} \times  \langle e_3 \rangle$ satisfying $[[e_1,e_2],e_2]=- e_1$ (Type III in \cite{bouetou1}).
The Lie algebra ${\bf g}^*_{(+)}$ corresponding to ${\bf m}^{+} \times  \langle e_3 \rangle$ is given by
\[
[e_1,e_2]=e_4, \quad  [e_4,e_2]= e_1, \]
whereas the other products are zero. This  shows that ${\bf g}^*_{(+)}$ is the direct product of the $3$-dimensional
solvable Lie algebra having precisely two $1$-dimensional ideals (\cite{jacobson}, pp.\ 12--14) and
the $1$-dimensional Lie algebra.\\
The Lie algebra ${\bf g}^*_{(-)}$ belonging to ${\bf m}^{-} \times  \langle e_3 \rangle$ is
defined  by
\[
[e_1,e_2]=e_4, \quad   [e_4,e_2]=- e_1, \]
which shows that ${\bf g}^*_{(-)}$ is the direct product of  the $3$-dimensional
solvable Lie algebra having no $1$-dimensional ideal (\cite{jacobson}, pp.\ 12--14) and the
$1$-dimensional Lie algebra.

\medskip
\noindent
{\bf 2 c.}  The Lie triple system is a non-split extension of  $ \langle e_1 \rangle$ by the
$2$-dimensional Lie triple system belonging to the relation (i) or (ii) in {\bf 2} respectively.
Hence it is characterized by
\begin{align*}
{\bf m}^{+} & : [[e_2,e_3],e_2]=e_1, & &  [[e_2,e_3],e_3]= e_2 \quad \qquad \text{or} \\[2mm]
{\bf m}^{-} & :  [[e_2,e_3],e_2]=e_1,  &&  [[e_2,e_3],e_3]=- e_2 \end{align*}
(Type V in \cite{bouetou1}).

The Lie algebra ${\bf g}^*_{(+)}$ of  ${\bf m}^{+}$ is given by
\[
[e_2,e_3]=e_4, \quad  [e_4,e_2]= e_1, \quad   [e_4,e_3]= e_2 \] which shows that  ${\bf g}^{*}_{(+)}$ contains the
$3$-dimensional nilpotent ideal $\langle e_1,e_2, e_4 \rangle$ and the  factor Lie algebra ${\bf g}^*_{(+)}/
\langle e_1 \rangle $ is the $3$-dimensional Lie algebra having precisely two $1$-dimensional  ideals. This Lie
algebra is isomorphic to $g_{4,8}$ with $h=-1$ in \cite{mubarak1} (p.\ 121).

The Lie algebra  ${\bf g}^*_{(-)}$ of  ${\bf m}^{-}$ is defined  by
\[
[e_2,e_3]=e_4, \quad  [e_4,e_2]= e_1, \quad  [e_4,e_3]=-e_2 ,\] which shows that it contains the  $3$-dimensional
nilpotent ideal $\langle e_1,e_2, e_4 \rangle$  and the basis element $e_3$  acts as a euclidean rotation in the
$2$-dimensional subspace $\langle e_2, e_4 \rangle $. This Lie algebra is isomorphic to $g_{4,9}$ with $p=0$ in
\cite{mubarak1} (p.\ 121).

\medskip
\noindent
{\bf 3.} It remains to discuss that ${\bf m}$ has only trivial centre. In this case ${\bf m}$ is determined by
\[
[[e_2,e_3],e_3]=e_1, \quad [[e_3,e_1],e_3]= e_2 \]
(Type VI in  \cite{bouetou1}).

The corresponding Lie algebra ${\bf g}^*$ is defined by:
\[
[e_2,e_3]=e_4,\quad  [e_4,e_3]=e_1, \quad  [e_1,e_3]=e_5, \quad  [e_5, e_3]=-e_2, \]
and the other products are zero. The Lie algebra ${\bf g}^*$ has two  $2$-dimensional  ideals which are invariant
under the action of $e_3$.

\begin{rema}
Our classification of the $3$-dimensional Lie triple system is a slight modification  of
{\sc Bouetou}'s  classification (\cite{bouetou1}). He  has two classes more, namely
\begin{align*}
\text{a)}\qquad \qquad \quad & [[e_2,e_3],e_1]=e_1, & &  [[e_3,e_1],e_2]=-e_1 \qquad\\[3pt]
\text{b)} \qquad\qquad \quad  & [[e_1,e_2],e_2] = \varepsilon e_1, & &  [[e_1,e_2],e_3]= e_1\qquad   \\[3pt]
        \qquad \qquad\quad  & [[e_3,e_1],e_2] = -e_1,& & [[e_3,e_1],e_3] = -\varepsilon e_1,\qquad  \end{align*}
where $\varepsilon =\pm 1$.

The case a) does not satisfy the property (\ref{eq:sec2}) in the definition of a Lie triple system and
the case b) is isomorphic to the case {\bf 2 b} using the isomorphism
\[
\alpha (e_1)=e_1,\quad \alpha(e_2)= \varepsilon e_2 - e_3,\quad \alpha(e_3)=- \varepsilon e_2+(\varepsilon +1) e_3. \]
\end{rema}

\section{3-dimensional Bol loops corresponding to the abelian\\ Lie triple system are abelian groups}

\begin{Lemma}
The universal Lie algebra
${\bf g}^U$ of the abelian Lie triple system ${\bf m}$ is given by the following multiplication table:
\[
[e_1,e_2]=e_4, \quad  [e_1,e_3]=e_5,\quad  [e_2,e_3]=e_6, \]
and the other products are zero.
\end{Lemma}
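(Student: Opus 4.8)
The plan is to realise the universal Lie algebra explicitly as the free two-step nilpotent Lie algebra on three generators, and then to pin it down by a dimension count. Since ${\bf m}$ is abelian, its triple product vanishes identically, so by axiom (i) of an imbedding every enveloping Lie algebra $\mathcal{L}^T$ of ${\bf m}$ satisfies $[[X^T,Y^T],Z^T]=(X,Y,Z)^T=0$ for all $X,Y,Z\in{\bf m}$; in particular $[\mathcal{V}^T,\mathcal{V}^T]$ centralises $\mathcal{V}^T$. This makes it natural to look for ${\bf g}^U$ among the two-step nilpotent Lie algebras generated in degree one.

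First I would introduce the Lie algebra ${\bf n}$ with basis $e_1,\dots,e_6$ and the bracket relations displayed in the statement. Antisymmetry is built into the table, and the Jacobi identity holds because $e_4,e_5,e_6$ span $[{\bf n},{\bf n}]$ and are central, so $[{\bf n},[{\bf n},{\bf n}]]=0$. Next I would check that the inclusion of ${\bf m}=\langle e_1,e_2,e_3\rangle$ into ${\bf n}$ is an imbedding of the abelian Lie triple system: for (i) both $(X,Y,Z)=0$ and $[[X,Y],Z]=0$ hold, the latter because $[X,Y]\in\langle e_4,e_5,e_6\rangle$ lies in the centre of ${\bf n}$; for (ii), $e_1,e_2,e_3$ generate ${\bf n}$ since their pairwise brackets already produce $e_4,e_5,e_6$. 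Hence ${\bf n}$ is an enveloping Lie algebra of ${\bf m}$ with ${\bf n}={\bf m}\oplus[{\bf m},{\bf m}]$, $[{\bf m},{\bf m}]=\langle e_4,e_5,e_6\rangle$, and $\dim{\bf n}=6$.

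Finally I would invoke the universal property. Applied to the imbedding ${\bf m}\hookrightarrow{\bf n}$ it yields a Lie algebra homomorphism of ${\bf g}^U$ onto ${\bf n}$ sending $e_i^U$ to $e_i$ for $i=1,2,3$. On the other hand ${\bf m}$ is $3$-dimensional, so $\dim{\bf g}^U\le 3\cdot 4/2=6$. A surjective linear map from a space of dimension at most $6$ onto the $6$-dimensional space ${\bf n}$ must be an isomorphism; therefore ${\bf g}^U\cong{\bf n}$, and reading off the images of $e_1^U,e_2^U,e_3^U$ together with $[e_1^U,e_2^U]$, $[e_1^U,e_3^U]$, $[e_2^U,e_3^U]$ gives the asserted multiplication table.

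There is no genuinely hard step here; the one thing to watch is not to confuse ${\bf g}^U$ with the standard enveloping algebra, which for the abelian Lie triple system collapses to the $3$-dimensional abelian Lie algebra because every $\sum_i[X_i^S,Y_i^S]$ already vanishes. The substance of the argument is the observation that the free two-step nilpotent Lie algebra on the three generators already attains the maximal admissible dimension $n(n+1)/2$, and is therefore forced to be universal, so that no direct verification of the ``freeness'' of $[\mathcal{V}^U,\mathcal{V}^U]$ is needed.
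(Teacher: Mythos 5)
Your proof is correct, and it reaches the multiplication table by a different route than the paper. The paper argues entirely inside ${\bf g}^U$: it uses the decomposition ${\bf g}^U={\bf m}^U\oplus[{\bf m}^U,{\bf m}^U]$, names the pairwise brackets $e_4,e_5,e_6$, and notes that since ${\bf m}$ is abelian the triple product $[[e_i,e_j],e_k]$ vanishes, so no further relations appear --- the linear independence of $e_4,e_5,e_6$ is left implicit (in the subsequent lemmas the same template is used and the Jacobi identity genuinely kills some of these generators). You instead construct the candidate algebra ${\bf n}$ externally as the free two-step nilpotent Lie algebra on three generators, verify that ${\bf m}\hookrightarrow{\bf n}$ is an imbedding, and then combine the universal surjection ${\bf g}^U\twoheadrightarrow{\bf n}$ with the bound $\dim{\bf g}^U\le n(n+1)/2=6$ to force an isomorphism. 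What your argument buys is a clean, explicit justification that $[e_1,e_2]$, $[e_1,e_3]$, $[e_2,e_3]$ really are linearly independent in the universal algebra, which is the one point the paper's two-line proof glosses over; what the paper's version buys is uniformity with the computations for the non-abelian triple systems, where one writes down the full table and lets the Jacobi identity do the pruning. Your closing remark correctly distinguishes ${\bf g}^U$ from the standard enveloping algebra, which for the abelian system is just the $3$-dimensional abelian Lie algebra, in agreement with case {\bf 1} of Section~3.
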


\begin{proof}
According to the definition of ${\bf g}^U$ we have ${\bf m}^U \cap [{\bf m}^U, {\bf m}^U]=0$. Thus we can choose the
elements $e_1$, $e_2$, $e_3$ as a basis of ${\bf m}^U$ and the elements $e_4:=[e_1,e_2]$, $e_5:=[e_1,e_3]$ and
$e_6:=[e_2,e_3]$ as the generators of $[{\bf m}^U, {\bf m}^U]$. Since ${\bf m}$ is abelian  we obtain the assertion.
\end{proof}

The centre $Z$ of ${\bf g}^U$ is generated by the  elements $e_4$, $e_5$, $e_6$ and is equal to
$[{\bf m}^U,{\bf m}^U]$. Therefore the Lie group $G^U$ of ${\bf g}^U$ is a $6$-dimensional nilpotent Lie group of
nilpotency class $2$. Every enveloping Lie algebra ${\bf g}^T$ of ${\bf m}$ is an epimorphic image of ${\bf g}^U$.
The $4$- or $5$-dimensional epimorphic images of ${\bf g}^U$ are also nilpotent and has nilpotency class $2$.
It follows from  \cite{loops} (p.\ 311) that any global connected differentiable proper Bol loop $L$ having a Lie group of
nilpotency class $2$ as the group topologically generated by its left translations contains an at least $3$-dimensional
nilpotent subgroup. Hence there does not exist any  differentiable proper $3$-dimensional Bol loop $L$ corresponding to
the abelian Lie triple system.

\section{3-dimensional Bol loops belonging to a Lie triple system \\ with  1-dimensional centre}

\subsection{Bol loops corresponding to the non-decomposable nilpotent standard enveloping Lie algebra with dimension 4}

We consider the Lie triple system ${\bf m}$ of type {\bf 2 a}  in Section 3.
\begin{Lemma}
The universal Lie algebra ${\bf g}^U$ of the Lie triple system ${\bf m}$  of type {\bf 2 a} is the
$5$-dimensional nilpotent Lie algebra defined by the following non-trivial products:
\[
[e_2,e_3]=e_4,\quad  [e_4,e_3]=e_1, \quad \ [e_3,e_1]=e_5. \]
The unique $4$-dimensional epimorphic image of ${\bf g}^U$ (up to isomorphisms) is the standard enveloping Lie algebra
${\bf g}^*$ described in {\bf 2 a}.
\end{Lemma}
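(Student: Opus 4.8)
The plan is to compute ${\bf g}^U$ directly from the defining decomposition ${\bf g}^U = {\bf m}^U \oplus [{\bf m}^U,{\bf m}^U]$ with ${\bf m}^U \cap [{\bf m}^U,{\bf m}^U] = 0$, in the same spirit as the preceding Lemma. Taking $e_1,e_2,e_3$ as a basis of ${\bf m}^U$, the subspace $[{\bf m}^U,{\bf m}^U]$ is spanned by $[e_1,e_2]$, $[e_1,e_3]$, $[e_2,e_3]$, so a priori $\dim {\bf g}^U\le 6$. I normalise the Lie triple system of type {\bf 2 a} so that the only nonvanishing triple product of basis vectors is $(e_2,e_3,e_3)=e_1$; then property (i) of an imbedding gives $[[e_2,e_3],e_3]=e_1$, while $e_1$ is central in ${\bf m}$, so $[[e_1,e_j],e_k]=0$ and $[[e_i,e_j],e_1]=0$ for all $i,j,k$, and in particular $[[e_2,e_3],e_2]=0$.

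First I would establish $\dim {\bf g}^U\le 5$. Writing out the Jacobi identity for the triple $\big([e_2,e_3],\,e_3,\,e_2\big)$ and inserting $[[e_2,e_3],e_3]=e_1$ together with $[[e_2,e_3],e_2]=0$ yields $[e_1,e_2]=0$. Hence $[{\bf m}^U,{\bf m}^U]$ is already spanned by $e_4:=[e_2,e_3]$ and $e_5:=[e_3,e_1]$, so $\dim {\bf g}^U\le 5$. From property (i) one reads off $[e_4,e_1]=[e_4,e_2]=0$ and $[e_4,e_3]=e_1$, while a one-line Jacobi computation gives $[e_5,e_4]=0$; so $e_5$ is central, and the entire bracket of ${\bf g}^U$ is given by $[e_2,e_3]=e_4$, $[e_4,e_3]=e_1$, $[e_3,e_1]=e_5$.

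For the reverse estimate and the identification I would exhibit the $5$-dimensional Lie algebra $\mathfrak{n}$ on the basis $e_1,\dots,e_5$ having exactly these three nontrivial products, check that the Jacobi identity holds (only finitely many triples are non-obvious, and $\mathfrak{n}$ is visibly nilpotent), that $e_1,e_2,e_3$ generate $\mathfrak{n}$, and that $e_i\mapsto e_i$ defines an imbedding of ${\bf m}$, i.e.\ that $[[e_i,e_j],e_k]_{\mathfrak{n}}$ reproduces the type-{\bf 2 a} triple product. The universal property of ${\bf g}^U$ then yields a Lie algebra epimorphism ${\bf g}^U\to\mathfrak{n}$, which combined with $\dim {\bf g}^U\le 5$ forces ${\bf g}^U\cong\mathfrak{n}$; this is the first assertion.

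Finally, for the statement on the $4$-dimensional epimorphic images: such an image is ${\bf g}^U/I$ for a $1$-dimensional ideal $I$, and since ${\bf g}^U$ is nilpotent every such $I$ is contained in the centre. Reading the multiplication table above gives $Z({\bf g}^U)=\langle e_5\rangle$, which meets ${\bf m}^U$ trivially; hence the only $4$-dimensional epimorphic image of ${\bf g}^U$ is ${\bf g}^U/\langle e_5\rangle$, whose multiplication table $[e_2,e_3]=e_4$, $[e_4,e_3]=e_1$ is precisely that of the standard enveloping Lie algebra ${\bf g}^*$ described in {\bf 2 a}. I expect the one genuinely delicate point to be the upper bound $\dim {\bf g}^U\le 5$, i.e.\ uncovering the hidden relation $[e_1,e_2]=0$ forced by the Jacobi identity; everything after that is bookkeeping with the universal property and a short centre computation.
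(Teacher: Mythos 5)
Your proof is correct, and its core coincides with the paper's: the decisive step in both arguments is the Jacobi identity for the triple $(e_4,e_3,e_2)$ with $e_4=[e_2,e_3]$, whose only surviving term is $[e_1,e_2]$, so that the a priori $6$-dimensional ${\bf g}^U={\bf m}^U\oplus[{\bf m}^U,{\bf m}^U]$ collapses to the stated $5$-dimensional table. You are more explicit than the paper about the converse direction (exhibiting the $5$-dimensional algebra $\mathfrak{n}$, checking it is an enveloping algebra of ${\bf m}$, and invoking the universal property to get an epimorphism ${\bf g}^U\to\mathfrak{n}$ which, together with $\dim{\bf g}^U\le 5$, forces ${\bf g}^U\cong\mathfrak{n}$); the paper leaves this identification implicit. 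The one place where you take a genuinely different route is the uniqueness of the $4$-dimensional quotient: the paper notes that the commutator subalgebra of the image is the image of $({\bf g}^U)'=\langle e_1,e_4,e_5\rangle$, concludes that the image has $2$-dimensional commutator subalgebra, and then relies on Morozov's classification identifying ${\bf g}^*$ as the unique $4$-dimensional nilpotent Lie algebra with that property; you instead use the fact that a $1$-dimensional ideal of a nilpotent Lie algebra is central, compute $Z({\bf g}^U)=\langle e_5\rangle$, and read off that $\langle e_5\rangle$ is the only possible kernel. Your version is self-contained and slightly cleaner; the paper's trades the centre computation for a citation.
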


\begin{proof}
Since ${\bf g}^U={\bf m}^U \oplus [{\bf m}^U, {\bf m}^U]$ we may assume that the set $\{e_1,e_2,e_3 \}$ is the set of the
generators of ${\bf m}^U$ and the elements $e_4:=[e_2,e_3]$, $e_5:=[e_3,e_1]$ and $e_6:=[e_1,e_2]$ are  basis elements
of $[{\bf m}^U, {\bf m}^U]$.  The relations of the Lie triple system of type {\bf 2 a}
yield the following  multiplication table:
\[
[e_2,e_3]=e_4,\quad [e_4,e_3]=e_1, \quad [e_3,e_1]=e_5 , \quad  [e_1,e_2]=e_6. \]
Since $[[e_4,e_3],e_2]+[[e_3,e_2],e_4]+[[e_2,e_4],e_3]=e_6$
this multiplication satisfies the Jacobi identity if and only if $[e_1,e_2]=0$ and this is the
first assertion. The  Lie algebra ${\bf g}^U$ is nilpotent hence every epimorphic images of ${\bf g}^U$ is also
nilpotent. If ${\bf g}$ is a $4$-dimensional epimorphic image of  ${\bf g}^U$ then the commutator
subalgebra of ${\bf g}$ is image of the commutator subalgebra $({\bf g}^U)'$.  Since
$ \dim ({\bf g}^U)'=3$ we have $ \dim {\bf g}'=2$ and ${\bf g}$ is  the standard enveloping Lie algebra
${\bf g}^*$ (cf.\ {\bf 2 a}).
\end{proof}

Denote by $G$ the Lie group of the standard enveloping Lie algebra ${\bf g}^*$. Using the Campbell--Hausdorff series the
multiplication of  $G$ is defined by:
\begin{gather*}
(x_1,x_2,x_3,x_4) \ast (y_1,y_2,y_3,y_4 ) \\[2mm]
=\left ( \begin{array}{c}
x_1+y_1+\dfrac{1}{2}(x_4 y_3-x_3 y_4)+\dfrac{1}{12}( x_3^2 y_2-x_3 x_2 y_3)+ \dfrac{1}{12}(x_2 y_3^2-x_3 y_3 y_2) \\[1mm]
x_2+y_2 \\[1mm]
x_3+y_3 \\[1mm]
x_4+y_4+\dfrac{1}{2}(x_2 y_3-x_3 y_2) \end{array} \right ) \end{gather*}
(\cite{bouetou2}, p.\ 77).
A $1$-dimensional subalgebra ${\bf h}$ of ${\bf g}^*$ such that ${\bf h}$ does not contain any non-trivial ideal of
${\bf g}$ and  ${\bf h} \cap {\bf m}= \{ 0 \}$ holds has the form
\[
{\bf h}=\langle e_4+a_1 e_1+a_2 e_2 +a_3 e_3 \rangle, \quad a_i \in \mathbb R. \]
The automorphism group  of ${\bf g}$ consisting of the linear mappings
\[
\alpha (e_1)=b f^2 e_1,\quad  \alpha (e_2)=a e_1+b e_2,\quad \alpha (e_3)=d e_1+l e_2+f e_3,
 \quad   \alpha (e_4)=b f e_4, \]
where $a,b,d,l,f \in \mathbb R$ and $bf \neq 0$, leaves the subspace ${\bf m}=\langle e_1, e_2, e_3 \rangle $
invariant and maps the subalgebra ${\bf h}$ onto one of the following subalgebras
\[
{\bf h}_1=\langle e_4 \rangle ,\quad  {\bf h}_2=\langle e_4+e_1 \rangle ,\quad
{\bf h}_3=\langle e_4+e_2 \rangle ,\quad   {\bf h}_4=\langle e_4+e_3 \rangle  \]
(see \cite{bouetou2}). Since the  element $e_4+e_2 \in {\bf h}_3$ is conjugate to the element
$e_2- \frac{1}{2} e_1 \in {\bf m}$ under $g=( 0,0,-1,0) \in G$ and the element  $e_4+e_3 \in {\bf h}_4$ is conjugate to
the element $ e_3 \in {\bf m}$ under $g=(0,1,0,0) \in G$ we have a contradiction to  Lemma 1.
Therefore we have to consider only the cases $({\bf g}^*, {\bf h}_1)$ and $({\bf g}^*, {\bf h}_2)$.
In  \cite{bouetou2} it is proved that for these $2$ cases global Bol loops exist. The loop $L$ belonging to the triple
\[
(G,H_1= \exp {\bf h}_1= \{ (0, 0, 0, h ) \mid h \in \mathbb R \},\
\exp {\bf m}=\{ (a,b,c,0) \mid a,b,c \in \mathbb R\}) \]
is a Bruck loop. The  loop $L^*$ corresponding to
\[
(G,H_2= \exp {\bf h}_2=\{(h,0,0,h) \mid h \in \mathbb R \},\ \exp {\bf m}=\{(a,b,c,0) \mid a,b,c \in \mathbb R\}) \]
is a left A-loop, because of $[{\bf h}, {\bf m}] \subseteq {\bf m}$. But it is not a Bruck loop since there is no
involutory automorphism $\sigma :{\bf g} \to {\bf g}$ such that $\sigma ({\bf m})=- {\bf m}$ and
$\sigma ({\bf h}_2)={\bf h}_2$.

Since the conjugation by the element $g=( 0,0,-1,0) \in G$ maps the subalgebra ${\bf h}_1$ of $H_1$
onto the subalgebra ${\bf h}_2$ of $H_2$ the loop $L$ is isotopic to $L^*$.

\medskip
Now we consider the universal Lie algebra ${\bf g}^U$ defined in Lemma 3, which is  the Lie algebra $L_5^2$ in
\cite{morozov} (p.\ 162). Using the Campbell--Hausdorff series (\cite{reinsch}) the multiplication of the Lie group $G^U$
of  ${\bf g}^U$ is given as follows:
\begin{gather*}
(x_1,x_2,x_3,x_4,x_5) \ast  (y_1,y_2,y_3,y_4,y_5) \\[2mm]
=\left ( \begin{array}{c}
x_1+y_1+\dfrac{1}{2}(x_4 y_3-x_3 y_4)+\dfrac{1}{12}( x_3^2 y_2-x_3 x_2 y_3)+ \dfrac{1}{12}(x_2 y_3^2-x_3 y_3 y_2) \\[1mm]
x_2+y_2 \\[1mm]
x_3+y_3 \\[1mm]
x_4+y_4+\dfrac{1}{2}(x_2 y_3-x_3 y_2) \\[1mm]
x_5+y_5+\dfrac{1}{2}(x_3 y_1-x_1 y_3)+ \dfrac{1}{12}(- x_3^2 y_4+x_3 x_4 y_3) \\[1mm]
+\dfrac{1}{12}(-x_4 y_3^2+x_3 y_3 y_4)+ \dfrac{1}{24}(x_2 x_3 y_3^2-x_3^2 y_2 y_3)  \end{array} \right ). \end{gather*}
The class of the $2$-dimensional subalgebras ${\bf h}$ of ${\bf g}_1$, which does not
contain any non-trivial ideal and ${\bf h} \cap {\bf m}= \{ 0 \}$ has the following shape:
\[
{\bf h}_{a,b,a',b'}=\langle e_4+ a e_1 +b e_2, e_5 +a' e_1 +b' e_2 \rangle,\quad  a,b,a',b' \in \mathbb R,
 \ (a', b') \neq (0,0) \]
(\cite{bouetou2}, p.\ 80). There is no Bol loop $L$ such that the group topologically generated
by the left translations of $L$ is the group $G^U$ and the stabilizer of the identity  $e \in L$ in $G^U$ is  the group
\[
H_{a,b,a',b'}=\{(\lambda_1a+\lambda_2 a',\lambda_1 b+\lambda_2 b',0,\lambda_1,\lambda_2),\lambda_1,\lambda_2\in\mathbb R\},
 \quad a, b, a', b' \in \mathbb R , \]
where $(a',b') \neq (0,0)$. Namely we show  that for given  $a, b, a', b' \in \mathbb R $ with  $(a', b')\neq (0,0)$ we can
find $(0,0) \neq (\lambda _1, \lambda _2)  \in \mathbb R^2$  and an element $x=(x_1,x_2,x_3,x_4,x_5) \in G^U$ such that
\[
Ad_x ( \lambda_1( e_4+ a e_1 +b e_2) + \lambda_2 (e_5 +a' e_1 +b' e_2)) \in {\bf m} \backslash \{ 0 \} \]
where ${\bf m}=\{ y_1 e_1+y_2 e_2+y_3 e_3;\  y_1,y_2,y_3 \in \mathbb R \}$.
This is a consequence of  the fact  that  the following system of equations:
\begin{gather*}
y_1=\lambda_1 \left(a -\frac{1}{2} x_3\right) + \lambda _2 a', \quad y_2= \lambda _1 b + \lambda _2 b', \quad y_3=0 \\[2mm]
\lambda_1 (1-x_3 b)- \lambda_2 b' x_3=0,\quad \lambda_2 (1+x_3 a') + \lambda_1 \left(x_3 a - \frac{1}{3} x_3^2\right)
 =0 \end{gather*}
has a solution  $x_3 \neq 0$, $(\lambda_1, \lambda_2) \neq (0,0)$ and $(y_1,y_2,y_3) \neq (0,0,0)$
which holds true  since  there exists  $x_3 \neq 0$ such that
\[
1+ x_3 (a' -b) + x_3^2 ( b' a - b a') - \frac{1}{3} b' x_3^3=0. \]
Summarizing our discussion we obtain

\begin{Theo}
There is only one isotopism class $\cal {C}$ of  $3$-dimensional connected differentiable Bol loops such that the group
$G$ topologically generated by their left translations is a nilpotent Lie group. The group $G$ is isomorphic to the
$4$-dimensional non-decomposable nilpotent Lie group. The class  $\cal {C}$ consists of precisely two isomorphism classes
${\cal C}_1$ and ${\cal C}_2$ which may be represented by the Bruck loop $L$ having the group
$H= \{(0,0,0,h) \mid h \in \mathbb R \}$ as the stabilizer of $e \in L$ in $G$ respectively  by the left A-loop $L^*$
having the group $H=\{ (h, 0, 0, h ) \mid h \in \mathbb R \}$ as the stabilizer
of $e \in L^*$ in $G$.
\end{Theo}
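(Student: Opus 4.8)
The plan is to prove Theorem~4 by piecing together the three facts that the preceding discussion has already established, and then verifying that the list of candidate pairs $({\bf g},{\bf h})$ is exhaustive. First I would argue that any $3$-dimensional connected differentiable Bol loop $L$ whose left-translation group $G$ is nilpotent must have a nilpotent enveloping Lie algebra ${\bf g}$ of the associated Lie triple system ${\bf m}$, of dimension $4$ or $5$ (using the general bound $n\le\dim{\bf g}\le n(n+1)/2$ for $n=3$). By the classification of $3$-dimensional solvable Lie triple systems in Section~3, the only type whose universal (hence every) enveloping algebra is nilpotent, and which can produce a proper Bol loop, is type \textbf{2~a}: the abelian type is excluded by Section~4, and all remaining solvable types have non-nilpotent enveloping algebras (or, as in types \textbf{2~b}, \textbf{2~c}, \textbf{3}, the relevant $G$ is not nilpotent). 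This pins down ${\bf m}$ to type \textbf{2~a} and leaves exactly two candidate enveloping algebras: the universal algebra ${\bf g}^U$ of Lemma~3 ($\dim=5$) and its unique $4$-dimensional epimorphic image ${\bf g}^*$.

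Next I would invoke the two computations already carried out: for ${\bf g}^U$ it was shown that \emph{no} complementary subalgebra ${\bf h}_{a,b,a',b'}$ avoids conjugates of elements of ${\bf m}$, so by Lemma~1 no global Bol loop corresponds to $({\bf g}^U,{\bf h})$; hence $G$ cannot be $5$-dimensional and must be the $4$-dimensional non-decomposable nilpotent Lie group $G$ with Lie algebra ${\bf g}^*$. For this $G$, the complementary $1$-dimensional subalgebras with ${\bf h}\cap{\bf m}=0$ were parametrised as $\langle e_4+a_1e_1+a_2e_2+a_3e_3\rangle$, and the automorphism group of ${\bf g}^*$ reduces these to the four normal forms ${\bf h}_1,\dots,{\bf h}_4$; of these, ${\bf h}_3$ and ${\bf h}_4$ are excluded by Lemma~1 (explicit conjugating elements were exhibited), leaving only ${\bf h}_1=\langle e_4\rangle$ and ${\bf h}_2=\langle e_4+e_1\rangle$, for both of which global Bol loops $L$ and $L^*$ are known to exist by \cite{bouetou2}.

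Finally I would assemble the isotopy/isomorphism bookkeeping. Since conjugation by $g=(0,0,-1,0)\in G$ carries ${\bf h}_1$ onto ${\bf h}_2$, the criterion recalled in Section~2 (two loops with the same $G$ are isotopic iff an inner automorphism of $G$ matches their stabilizers) shows $L$ and $L^*$ lie in the same isotopism class $\mathcal{C}$; and since every Bol loop with $G$ as its left-translation group arises, up to isomorphism, from one of the two pairs, $\mathcal{C}$ is the unique such isotopism class. To see $\mathcal{C}$ splits into exactly two isomorphism classes, I would note $L$ is a Bruck loop (an involutory automorphism $\sigma$ of ${\bf g}^*$ with $-1$-eigenspace ${\bf m}$ and $+1$-eigenspace ${\bf h}_1$ exists) while $L^*$ is a left A-loop that is \emph{not} a Bruck loop (no involutory $\sigma$ fixes ${\bf h}_2$ and negates ${\bf m}$), so $L\not\cong L^*$; and conversely any loop in $\mathcal{C}$ is isomorphic to one obtained from $({\bf g}^*,{\bf h}_1)$ or $({\bf g}^*,{\bf h}_2)$ via an automorphism of $G$ leaving $\exp{\bf m}$ invariant, hence to $L$ or $L^*$. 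I expect the main obstacle to be nothing deep but rather the careful justification that the enumeration of candidate pairs $({\bf g},{\bf m},{\bf h})$ is genuinely complete — i.e.\ that no other solvable Lie triple system yields a nilpotent $G$ and that the automorphism-group reduction to ${\bf h}_1,\dots,{\bf h}_4$ misses nothing — since everything else is a direct appeal to Lemma~1, to \cite{bouetou2}, and to the isotopy criterion of Section~2.
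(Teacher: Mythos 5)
Your proposal is correct and follows essentially the same route as the paper: Theorem~4 is stated there as a summary of the preceding discussion in Section~5.1, which consists exactly of the steps you outline — reduction to the type \textbf{2 a} Lie triple system, exclusion of the $5$-dimensional universal enveloping algebra and of ${\bf h}_3,{\bf h}_4$ via Lemma~1, existence of the loops for ${\bf h}_1,{\bf h}_2$ by \cite{bouetou2}, isotopy via the conjugating element $(0,0,-1,0)$, and the Bruck/non-Bruck distinction separating the two isomorphism classes. No essential difference from the paper's argument.
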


\subsection{Bol loops corresponding to a Lie triple system which is a direct product of its centre and a non-abelian
Lie triple system}

We  discuss here the Lie triple systems  characterized in {\bf 2 b} in Section 3.

\begin{Lemma}
The universal Lie algebras ${\bf g}^U_{(+)}$ and ${\bf g}^U_{(-)}$  of the Lie triple systems
${\bf m}^+\times\langle e_3 \rangle$ or  ${\bf m}^- \times \langle e_3 \rangle$ respectively, are defined by:
\[
[e_1,e_2]=e_4, \quad [e_4,e_2]= \varepsilon\  e_1, \quad   [e_2,e_3]=e_5, \]
where $\varepsilon =1$ for ${\bf g}^U_{(+)}$ and $-1$ for ${\bf g}^U_{(-)}$, and the other products are zero.

The unique $4$-dimensional epimorphic image of  ${\bf g}^U_{(-)}$ is
(up to isomorphisms) the standard enveloping Lie algebra ${\bf g}^*_{(-)}$ described in {\bf 2 b}.

The $4$-dimensional epimorphic images of  ${\bf g}^U_{(+)}$ are (up to isomorphisms)
either the standard enveloping Lie algebra ${\bf g}^*_{(+)}$ given in  {\bf 2 b}  or the
Lie algebra ${\bf g}$ given by:
\[
[e_1,e_2]=e_1, \quad  [e_2,e_3]=e_4, \]
whereas  the other products are zero.
\end{Lemma}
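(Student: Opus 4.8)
The plan is to follow the pattern of the proofs of Lemma~3 and Lemma~5. For the first assertion I would construct ${\bf g}^U_{(\pm)}$ directly from the universal property. Since ${\bf g}^U={\bf m}^U\oplus[{\bf m}^U,{\bf m}^U]$ with ${\bf m}^U\cap[{\bf m}^U,{\bf m}^U]=\{0\}$, I may take $e_1,e_2,e_3$ as a basis of ${\bf m}^U$ and $e_4:=[e_1,e_2]$, $e_5:=[e_2,e_3]$, $e_6:=[e_3,e_1]$ as generators of $[{\bf m}^U,{\bf m}^U]$. The single defining relation $[[e_1,e_2],e_2]=\varepsilon e_1$ of ${\bf m}^{\pm}\times\langle e_3\rangle$, together with the fact that $e_3$ lies in the centre of the Lie triple system, yields $[e_4,e_2]=\varepsilon e_1$ and $[e_4,e_1]=[e_4,e_3]=[e_5,e_i]=[e_6,e_i]=0$ for $i=1,2,3$. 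The Jacobi identity then pins down the rest: expanding $[[e_1,e_2],[e_2,e_3]]$ via the Jacobi identity in two different ways yields $-\varepsilon e_6$ on one side and $0$ on the other, whence $e_6=0$; after this a routine check of all Jacobi identities shows that the $5$-dimensional algebra with $[e_1,e_2]=e_4$, $[e_4,e_2]=\varepsilon e_1$, $[e_2,e_3]=e_5$ is consistent, and it is therefore ${\bf g}^U_{(\pm)}$.

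For the remaining assertions I would describe the $4$-dimensional epimorphic images of ${\bf g}^U_{(\pm)}$ as its quotients by $1$-dimensional ideals and enumerate these ideals. A short computation gives $({\bf g}^U_{(\pm)})'=\langle e_1,e_4,e_5\rangle$ and $Z({\bf g}^U_{(\pm)})=\langle e_5\rangle$. A $1$-dimensional ideal $I$ is either central, hence $I=\langle e_5\rangle$, or else $[{\bf g}^U,I]=I\subseteq({\bf g}^U)'$; writing $I=\langle ae_1+de_4+fe_5\rangle$ and imposing invariance under $\operatorname{ad}\,e_2$ leads to the eigenvalue equation $\lambda^{2}=\varepsilon$ (and forces $f=0$ unless $a=d=0$). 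For $\varepsilon=-1$ this has no real solution, so $\langle e_5\rangle$ is the only $1$-dimensional ideal and ${\bf g}^U_{(-)}$ has a unique $4$-dimensional quotient; in it $[e_2,e_3]$ vanishes and $e_3$ becomes central, and one recognises ${\bf g}^*_{(-)}$ of {\bf 2 b}. For $\varepsilon=1$ one finds, besides $\langle e_5\rangle$, exactly the two ideals $\langle e_1+e_4\rangle$ and $\langle e_1-e_4\rangle$; the quotient by $\langle e_5\rangle$ is ${\bf g}^*_{(+)}$, while for either of the other two an explicit change of basis identifies the quotient with the algebra $[e_1,e_2]=e_1$, $[e_2,e_3]=e_4$, so these two quotients are isomorphic to each other. (This last algebra is not isomorphic to ${\bf g}^*_{(+)}$, since ${\bf g}^*_{(+)}$ is decomposable and this one is not.)

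All of this is elementary computation; the two places where care is needed are (a) the sign bookkeeping in the Jacobi identity, so that $e_6=0$ is genuinely forced and no spurious relation slips through, and (b) the enumeration of the $1$-dimensional ideals, where the decisive phenomenon is that $\operatorname{ad}\,e_2$ acts on the plane $\langle e_1,e_4\rangle$ with the real eigenvalues $\pm1$ when $\varepsilon=1$ but as a rotation without real eigenvalues when $\varepsilon=-1$; this is precisely what makes ${\bf g}^U_{(+)}$ have three $1$-dimensional ideals and ${\bf g}^U_{(-)}$ only one. I expect (b) to be the main obstacle.
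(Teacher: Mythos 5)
Your proposal is correct and follows essentially the same route as the paper: impose the triple-system relations on the generators $e_4,e_5,e_6$ of $[{\bf m}^U,{\bf m}^U]$, use the Jacobi identity to force $e_6=0$, and then obtain the $4$-dimensional images as quotients by the $1$-dimensional ideals, which are $\langle e_5\rangle$ together with $\langle e_1\pm e_4\rangle$ exactly when $\varepsilon=1$. The only difference is that you justify the list of $1$-dimensional ideals via the eigenvalues of $\operatorname{ad} e_2$ on $\langle e_1,e_4\rangle$, a detail the paper states without proof; your sign bookkeeping and the identification of the two non-central quotients with $[e_1,e_2]=e_1$, $[e_2,e_3]=e_4$ both check out.
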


\begin{proof}
For a  basis  of the universal Lie algebras ${\bf g}^U={\bf m}^U \oplus [{\bf m}^U,{\bf m}^U]$ one can choose the elements
$e_1$, $e_2$, $e_3$, $e_4$, $e_5$, $e_6$, where $e_1, e_2, e_3$ are the generators of ${\bf m}^U$ and
$e_4:=[e_1,e_2]$, $e_5:=[e_2,e_3]$, $e_6:=[e_1,e_3]$ are the generators of
$[{\bf m}^U,{\bf m}^U]$. Using the relations of the Lie triple systems of type {\bf 2 b}  we obtain the following
multiplication table:
\[
[e_1,e_2]=e_4, \quad [e_4,e_2]= \pm e_1, \quad  [e_2,e_3]=e_5, \quad  [e_1,e_3]=e_6 \]
and the other products are zero. Since for the elements $e_2,e_3,e_4$ one has
\[
[[e_2,e_3],e_4]+[[e_3,e_4],e_2]+[[e_4,e_2],e_3]= \pm e_6, \]
this multiplication satisfies the Jacobi identity precisely if $[e_1,e_3]=0$,  and we obtain
the universal Lie algebras ${\bf g}^U_{(\pm)}$. The unique $1$-dimensional ideal of  ${\bf g}^U_{(-)}$ is the centre  of
${\bf g}^U_{(-)}$, which is generated by $e_5$. Moreover, the epimorphic image
$\alpha ({\bf g}^U_{(-)})$ under the mapping $\alpha (e_i)=e_i$, $i=1,2,3,4$,
 $\alpha (e_5)=0$ is the Lie algebra ${\bf g}^*_{(-)}$.

The  $1$-dimensional ideals of ${\bf g}^U_{(+)}$ are $i_1=\langle e_5 \rangle$,
$i_2=\langle e_1+e_4 \rangle$, $i_3=\langle e_4-e_1 \rangle$. The image of  ${\bf g}^U_{(+)}$ under the epimorphism $\beta
(e_i)=e_i$, $i=1,2,3,4$ and $\beta (e_5)=0$ is the Lie algebra ${\bf g}^*_{(+)}$.  The Lie algebras
${\bf g}^U_{(+)}/ \langle e_1+e_4 \rangle$ and ${\bf g}^U_{(+)}/ \langle e_4-e_1 \rangle$ are determined by
\begin{align*}
[e_1,e_2]& =-e_1, & &  [e_2,e_3]=e_4; \qquad \text{and  by} \\[2mm]
[e_1,e_2] & =e_1, &&   [e_2,e_3]=e_4 \end{align*}
respectively. This shows that ${\bf g}^U_{(+)}/ \langle e_1+e_4 \rangle$ is isomorphic to
${\bf g}^U_{(+)}/ \langle e_4-e_1 \rangle$ under the isomorphism $\gamma (e_i)=e_i$, $i=1,4$ and
$\gamma (e_j)=-e_j$, $j=2,3,$ and the assertion follows.
\end{proof}

First we seek for Bol loops having the standard enveloping Lie algebra ${\bf g}^*_{(+)}$ given
in {\bf 2 b} as the Lie algebra of the group topologically generated by their left translations.
The Lie group $G$ of ${\bf g}^*_{(+)}$ is the direct product $G=G_1 \times G_2$, where $G_1$ is the $3$-dimensional
solvable Lie group having precisely two $1$-dimensional normal subgroups and $G_2$ is a  $1$-dimensional Lie group.
Since the Lie triple system is the direct product of its centre $C$ and a $2$-dimensional non-abelian Lie triple system
$A$ one has $\exp {\bf m}= \exp {\bf m}_1 \times \exp {\bf m}_2$, where $\exp {\bf m}_1$ respectively  $\exp {\bf m}_2$
corresponds to $A$ respectively to $C$. Moreover, $\exp {\bf m}_1 \subseteq G_1$ and $\exp {\bf m}_2=G_2$.

First we assume that the $1$-dimensional Lie group $H=\exp{\bf h}$ is contained in $G_1 \times \{ 1 \}$. Then the loop
$L$ is the direct product of a $2$-dimensional Bol loop $L_1$ and a $1$-dimensional Lie group (\cite{loops},
Proposition 1.19, p.\ 28). The loop $L_1$ has $G_1$ as the group generated by its left translations, and it is isomorphic
to precisely one of the non-isomorphic loops $L_{\alpha }$, $\alpha \in \mathbb R$ with $\alpha \le -1$ given in
 Theorem 23.1 of  \cite{loops}. All loops $L_{\alpha }$ and hence also $L_1$ are isotopic to the pseudo-euclidean plane
loop  (\cite{loops}, Remark 25.4, p.\ 326).

If the $1$-dimensional Lie group $H=\exp {\bf h}$ is not contained in $G_1 \times \{1\}$ then $H$ is isomorphic to
$\mathbb R$ since $G_1$ does not contain any discrete normal subgroup $\neq 1$. Therefore  $G_2 \cong  \mathbb R$,
$\exp {\bf m}=\exp {\bf m}_1  \times \mathbb R$  and $H$ has the shape $\{ (h_1, \varphi (h_1)\mid h_1 \in H_1 \}$, where
$H_1 \cong \mathbb R$ is a subgroup of $G_1$ and $\varphi :H_1 \to G_2$ is a monomorphism. For a loop
$L$ corresponding to the pair $(G,H)$ the group $G_2$ is a normal  subgroup
of  $L$ and the factor loop $L / G_2$ is isomorphic to a loop $L_1$ defined on the
factor space $G_1/H_1$. According to  Theorem 23.1 in \cite{loops} the loop $L_1$ is isomorphic to a loop $L_{\alpha }$.
 Then the Proposition  2.4 in \cite{loops} yields that  $L$ is a  Scheerer extension of the
group $\mathbb R$ by a loop  $L_{\alpha }$.

\medskip
Now we deal with  the standard enveloping Lie algebra ${\bf g}^*_{(-)}$ given in {\bf 2~b}. The Lie  group $G$ of
${\bf g}^*_{(-)}$ is the direct product $G=G_1 \times G_2$ of the $3$-dimensional solvable Lie group $G_1$ having no
non-trivial normal subgroup and a $1$-dimensional Lie group $G_2$. Since $\exp {\bf m}$ decomposes into the
topological product $\exp {\bf m}= \exp {\bf m}_1 \times \exp { \bf m}_2$ with  $\exp {\bf m}_1 \subset G_1$ and
$\exp { \bf m}_2=G_2$ the $1$-dimensional Lie group $H$ has the form $(H_1, \varphi (H_1))$, where $\varphi :H_1 \to G_2$
is a homomorphism. Hence  the loop belonging to $(G,H, \exp {\bf m})$ is  a Scheerer extension of a
$1$-dimensional Lie group and a $2$-dimensional loop $\tilde{L}$ (cf.\ \cite{loops} Proposition~1.19, p.\ 28 and
Proposition~2.4, p.\ 44). But the group $G_1$ cannot be the group topologically generated by the left
translations  of  $\tilde{L}$ (cf.\ \cite{loops} Lemma 23.15, p.\ 312).
Therefore there is no differentiable Bol loop corresponding to the group $G$.

\medskip
Now we investigate the Lie algebra ${\bf g}$ in Lemma 5, which
 consists of the matrices
$$
v e_1+u e_2+z e_3+k e_4 \mapsto \left ( \begin{array}{ccccc}
0 & v & 0 & 0 & 0 \\
0 & u & 0 & 0 & 0 \\
0 & 0 & 0 & u & k \\
0 & 0 & 0 & 0 & z \\
0 & 0 & 0 & 0 & 0 \end{array} \right );\quad u,v,k,z \in \mathbb R. $$
It is a  central extension of $\mathbb R$ by the direct product of $\mathbb R$ and the non-abelian $2$-dimensional
Lie algebra (see  \cite{mubarak1}, pp.\ 120--121).
The multiplication of the Lie group $G$ of ${\bf g}$ is defined by
\[
(x_1,x_2,x_3,x_4) \ast (y_1,y_2,y_3,y_4)=(y_1+x_1 e^{y_2}, x_2+y_2, x_3+y_3, x_4+y_4+x_2 y_3). \]
The $1$-dimensional subalgebras ${\bf h}$ of ${\bf g}$ which  complement ${\bf m}=\langle e_1,e_2,e_3 \rangle$ have the
shapes:
\[
{\bf h}_{a_1,a_2,a_3}=\langle e_4+a_1 e_1+a_2 e_2+a_3 e_3 \rangle, \]
where $a_1,a_2,a_3 \in \mathbb R$. For $a_1=a_2=a_3=0$ the Lie algebra ${\bf h}_{0,0,0}=\langle e_4 \rangle $
 is an ideal of ${\bf g}$. Therefore we have $(a_1,a_2,a_3) \neq (0,0,0)$. The automorphisms $\gamma $ of ${\bf g}$
 leaving ${\bf m}$ invariant are determined by the linear mappings
$$
\gamma (e_1)=a e_1,\quad \gamma (e_2)=b_1 e_1+e_2+b_3 e_3, \quad \gamma (e_3)=d e_3,\quad \gamma (e_4)=d e_4,$$
such that $a,d \in \mathbb R \backslash \{ 0 \}$ and $b_1,b_3 \in \mathbb R$. A suitable automorphism
$\gamma $ of ${\bf g}$ with $\gamma ({\bf m})={\bf m}$ maps the subalgebra ${\bf h}_{a_1,a_2,a_3}$
onto one of the following Lie algebras:
\[
{\bf h}_1=\langle e_4+e_2 \rangle ,\ {\bf h}_2=\langle e_4+a_3 e_3 \rangle ,\ a_3 \in \mathbb R  \backslash \{0\},
 \quad {\bf h}_3=\langle e_4+e_1+a_3 e_3 \rangle ,\ a_3 \in \mathbb R .\]
Because of  $e_2=Ad_g(e_4+e_2) \in {\bf m}$ with $g=(0,0,-1,0) \in G$ the Lie algebra ${\bf h}_1$ is excluded. Since for
$a_3 \neq 0$ and $g=(0,a_3^{-1},0,0) \in G$ one has $a_3 e_3=Ad_g(e_4+a_3 e_3) \in {\bf m}$ and
$[\exp (a_3^{-1})] e_1+a_3 e_3 = Ad_g(e_4+e_1+a_3 e_3) \in {\bf m}$ we have to investigate only the triple
$({\bf g}, {\bf h}=\langle e_4+e_1 \rangle, {\bf m})$ (cf.\ Lemma 1). For the exponential image of
${\bf m}=\langle e_1,e_2,e_3 \rangle$ we obtain
\begin{align*}
\exp\ {\bf m}& =\exp \{k_1 e_1+k_2 e_2+k_3 e_3; k_1,k_2,k_3 \in \mathbb R \} \\
&=\left \{ \left (k_1 \frac{e^{k_2}-1}{k_2}, k_2, k_3, \frac{1}{2} k_2 k_3 \right ), \ k_i \in \mathbb R,\ i=1,2,3\right \},
 \end{align*}
and the subgroup $H=\exp  \{a(e_4+e_1), a \in \mathbb R \}$ consists of the  elements $(a,0,0,a)$, $a \in \mathbb R$.

Since any element of $G$ decomposes uniquely as $(0,y_1,y_2,y_3)(a,0,0,a)$ we can conclude that $\exp {\bf m}$ determines a
 global  Bol loop if and only if  each element $g=(0,y_1,y_2,y_3) \in G$, $y_i \in \mathbb R$, $i=1,2,3$  can  be
written  uniquely as a product $g=m h$   or equivalently $m=g h^{-1}$  with $m \in \exp {\bf m}$ and
$h \in H$. This is the case since  for all given $y_1,y_2,y_3 \in \mathbb R$
the following system of equations
\[
y_1=k_2,\ y_2=k_3,\ y_3-a=\frac{1}{2} k_2 k_3,\ a=-k_1 \frac{e^{k_2}-1}{k_2} \]
has a unique solution $(a,k_1,k_2,k_3) \in \mathbb R^4$ given by
\[
k_2:=y_1,\ k_3:=y_2,\ a:=y_3-\frac{1}{2} y_1 y_2,\ k_1:=\frac{\frac{1}{2}y_1 y_2- y_3}{\frac{e^{y_1}-1}{y_1}}. \]
Hence the pair $(G,H=\{(a,0,0,a), a \in \mathbb R \} )$ corresponds to a
 $3$-dimensional Bol loop $L$. Because of $[{\bf h}, {\bf m}] \subseteq {\bf m}$ the loop $L$ is a left A-loop.

\smallskip
Now we summarize the discussion in

\begin{Theo}
Let $L$ be a $3$-dimensional connected differentiable Bol loop corresponding to a Lie triple system which is a
direct product of its centre and a non-abelian $2$-dimensional Lie triple system. If the group $G$ topologically
generated by the left translations of $L$ is $4$-dimensional,  then for $L$ and for  $G$ precisely one of the
following cases occur:

{\rm 1)} $G$ is the direct product of the $3$-dimensional solvable Lie group having precisely two $1$-dimensional
normal subgroups and a $1$-dimensional Lie group and $L$ is  either the direct product of the $1$-dimensional
compact Lie group $SO_2(\mathbb R)$ with a $2$-dimensional Bol loop $L_{\alpha }$ defined in  Theorem 23.1 of
\cite{loops}  or a Scheerer extension of the  group $\mathbb R$ by a  loop $L_{\alpha }$.

{\rm 2)} $G$ is the $4$-dimensional solvable Lie group with  the multiplication
\[
(x_1,x_2,x_3,x_4) \ast (y_1,y_2,y_3,y_4)=(y_1+x_1 e^{y_2}, x_2+y_2, x_3+y_3, x_4+y_4+x_2 y_3) \] and $L$ is
isomorphic to  the left A-loop  having  $H=\{ (a,0,0,a)\mid a \in \mathbb R \}$ as the stabilizer of the identity
of $L$.
\end{Theo}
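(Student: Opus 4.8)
The plan is to assemble into the three listed alternatives the case analysis already carried out between Lemma 5 and this statement. The first step is to pin down $G$: since $G$ is $4$-dimensional and its Lie algebra is an enveloping algebra of ${\bf m}^{+}\times\langle e_3\rangle$ or of ${\bf m}^{-}\times\langle e_3\rangle$, Lemma 5 leaves exactly three candidates for ${\bf g}$, namely the standard enveloping algebras ${\bf g}^*_{(+)}$ and ${\bf g}^*_{(-)}$ of {\bf 2 b} and the algebra with $[e_1,e_2]=e_1$, $[e_2,e_3]=e_4$. By the structural descriptions recorded in {\bf 2 b} and in Lemma 5 (one is a product with a $3$-dimensional solvable algebra with two $1$-dimensional ideals, one with such an algebra having no $1$-dimensional ideal, and one is a central extension of $\mathbb R$ by $\mathbb R\oplus(\text{non-abelian }2\text{-dim})$) these are pairwise non-isomorphic, so the three outcomes are mutually exclusive and may be treated separately.

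Next I would eliminate ${\bf g}^*_{(-)}$: here $G=G_1\times G_2$ with $G_1$ the $3$-dimensional solvable Lie group having no non-trivial normal subgroup, $\exp{\bf m}=\exp{\bf m}_1\times G_2$, and, as shown above, every complement ${\bf h}$ of ${\bf m}$ forces a corresponding loop to be a Scheerer extension of a $1$-dimensional Lie group by a $2$-dimensional loop $\tilde L$ whose left translations generate $G_1$, which is impossible by Lemma 23.15 of \cite{loops}. For ${\bf g}^*_{(+)}$ I would use the product $G=G_1\times G_2$ (with $G_1$ the $3$-dimensional solvable group with precisely two $1$-dimensional normal subgroups) together with $\exp{\bf m}=\exp{\bf m}_1\times G_2$, and split on whether $H=\exp{\bf h}\subseteq G_1\times\{1\}$. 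If it is, then $L\cong L_1\times G_2$ with $L_1$ a $2$-dimensional Bol loop having $G_1$ as translation group, hence $L_1\cong L_\alpha$ for some $\alpha\le-1$ by Theorem 23.1 of \cite{loops}; the subcase $G_2\cong SO_2(\mathbb R)$ gives the first alternative of 1), while $G_2\cong\mathbb R$ is the trivial Scheerer extension and falls under the second alternative. If $H\not\subseteq G_1\times\{1\}$, then $G_1$ having no discrete normal subgroup forces $H\cong\mathbb R$, so $G_2\cong\mathbb R$ and $H=(H_1,\varphi(H_1))$ with $\varphi$ a monomorphism; then $G_2$ is normal in $L$, $L/G_2\cong L_1\cong L_\alpha$, and Proposition 2.4 of \cite{loops} presents $L$ as a Scheerer extension of $\mathbb R$ by an $L_\alpha$, the second alternative of 1).

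The last and most substantive case is the third algebra of Lemma 5, where $G$ is the explicit $4$-dimensional group displayed above. The steps are: (i) check that the $1$-dimensional complements of ${\bf m}=\langle e_1,e_2,e_3\rangle$ are precisely the ${\bf h}_{a_1,a_2,a_3}=\langle e_4+a_1e_1+a_2e_2+a_3e_3\rangle$, with $(a_1,a_2,a_3)\ne(0,0,0)$ since ${\bf h}_{0,0,0}$ is an ideal; (ii) use the automorphisms of ${\bf g}$ fixing ${\bf m}$ to reduce to the normal forms ${\bf h}_1=\langle e_4+e_2\rangle$, ${\bf h}_2=\langle e_4+a_3e_3\rangle$ $(a_3\ne0)$, ${\bf h}_3=\langle e_4+e_1+a_3e_3\rangle$; (iii) invoke Lemma 1 to discard ${\bf h}_1$, ${\bf h}_2$, and ${\bf h}_3$ with $a_3\ne0$, whose generators are $G$-conjugate into ${\bf m}\setminus\{0\}$, leaving only ${\bf h}=\langle e_4+e_1\rangle$; and (iv) verify that $\exp{\bf m}$ is a global set of coset representatives for $H=\{(a,0,0,a)\mid a\in\mathbb R\}$ in $G$, i.e. that every element of $G$ factors uniquely as $mh$ with $m\in\exp{\bf m}$, $h\in H$, which comes down to the displayed system of equations having a unique solution; the left A-loop property is then immediate from $[{\bf h},{\bf m}]\subseteq{\bf m}$. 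I expect step (iv) — the passage from the local to the global loop — to be the real obstacle, since it is the only point that is not pure bookkeeping over the already-established lemmas and genuinely uses the explicit group law; the remaining work is organizing the subcases and confirming that they are disjoint and exhaustive.
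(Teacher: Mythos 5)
Your proposal is correct and follows essentially the same route as the paper: Lemma 5 reduces to the three candidate $4$-dimensional enveloping algebras, ${\bf g}^*_{(-)}$ is excluded via the Scheerer-extension argument and Lemma 23.15 of \cite{loops}, ${\bf g}^*_{(+)}$ is handled by splitting on whether $H\subseteq G_1\times\{1\}$, and the third algebra is treated by normalizing the complements ${\bf h}_{a_1,a_2,a_3}$ under automorphisms, discarding all but $\langle e_4+e_1\rangle$ via Lemma 1, and verifying the global unique factorization $g=mh$. The only (harmless) addition is your explicit remark that the three algebras are pairwise non-isomorphic, which the paper leaves implicit.
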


Finally we treat the universal Lie algebras ${\bf g}^U_{(\pm )}$ defined in Lemma 5. (The Lie algebra ${\bf g}^U_{(+)}$
is isomorphic to the Lie algebra $g_{5,8}$ with $\gamma =-1$ and  ${\bf g}^U_{(-)}$ is isomorphic to
the Lie algebra $g_{5,14}$ with $p=0$ in \cite{mubarak}, p.\ 105.)
 The multiplication of the Lie group $G^U_{(\pm)}$ corresponding to ${\bf g}^U_{(\pm)}$   is given by:
\[\left ( \begin{array}{c}
x_1 \\
x_2 \\
x_3 \\
x_4 \\
x_5  \end{array} \right ) \ast \left ( \begin{array}{c}
y_1 \\
y_2 \\
y_3 \\
y_4 \\
y_5 \end{array} \right )= \left ( \begin{array}{c}
y_1+ x_1 \bcos y_2 + \varepsilon x_4 \bsin  y_2  \\
y_2+ x_2 \\
y_3+ x_3 \\
y_4+ x_1 \bsin y_2 + x_4 \bcos  y_2  \\
y_5+x_5+ x_2 y_3 \end{array} \right ). \]
The triple $(\bcos  y_2, \bsin  y_2, \varepsilon)$ denotes $(\cosh y_2, \sinh y_2, 1)$ in case  $G^U_{(+)}$ and \\
$(\cos y_2, \sin y_2, -1)$ in case $G^U_{(-)}$.

The $2$-dimensional subalgebras ${\bf h}$ of ${\bf g}^U_{(\pm)}$   which are  complements to
${\bf m}=\langle e_1,e_2,e_3 \rangle$ have the shapes:
\[
{\bf h}_{a_1,a_3,b_1,b_3}=\langle e_4 + a_1 e_1 + a_3 e_3, e_5 + b_1 e_1 + b_3 e_3 \rangle, \]
where $a_1,a_3,b_1,b_3 \in \mathbb R$. Since the ideal $\langle e_5 \rangle$ of ${\bf g}^U_{(\pm)}$ lies in
${\bf h}_{a_1,a_3,0,0}$ and the ideal  $\langle e_4 \pm e_1 \rangle$ of
${\bf g}^U_{(+)}$ is contained in  ${\bf h}_{\pm 1,0,b_1,b_3}$
 we may suppose that $(b_1,b_3) \neq (0,0)$ in the case of
${\bf g}^U_{(+)}$ as well as of  ${\bf g}^U_{(-)}$ and
$(a_1,a_3) \neq (\pm 1,0)$ in the case ${\bf g}^U_{(+)}$.

For $b_1=0$  the element $0 \neq b_3 e_3 \in {\bf m}$  is conjugate to  $e_5+b_3 e_3 \in {\bf h}$ under
$g=(0,- b_3^{-1},0,0,0) \in G^U_{(\pm)}$ which  contradicts Lemma~1.

If  $b_1 \neq 0$ then the linear mapping $\alpha $ defined by
\[
\alpha (e_1)= \frac{1}{b_1} e_1,\ \alpha (e_2)= e_2,\ \alpha (e_3)= e_3,\ \alpha (e_4)= \frac{1}{b_1} e_4,\
\alpha (e_5)= e_5 \]
is an automorphism of  ${\bf g}^U_{(\pm)}$. This automorphism  leaves the subspace ${\bf m}$ invariant and
reduces  ${\bf h}_{a_1,a_3,b_1,b_3}$ to ${\bf h}_{a_1,a_3,1,b_3}$.

The Lie group $H_{a_1,a_3,1,b_3}=\exp {\bf h}_{a_1,a_3,1,b_3}$ consists of the elements
\[
\{(l a_1+k,0,l a_3+k b_3, l,k),\ l,k \in \mathbb R \} \]
and the exponential image of the subspace ${\bf m}$ has the form
\begin{align*}
\exp\ {\bf m}& =\exp \{k_1 e_1+k_2 e_2+k_3 e_3; k_1,k_2,k_3 \in \mathbb R \} \\[2mm]
& =\left\{\left ( \frac{k_1 \bsin  k_2}{k_2}, k_2, k_3, \varepsilon \frac{k_1(\bcos  k_2-1)}{k_2},
 \frac{1}{2}k_2 k_3\right),\ k_1,k_2,k_3 \in \mathbb R \right\}. \end{align*}
Every element of the Lie group $G^U_{(\pm)}$  can be written  uniquely as a  product
\[
(x_1,x_2,x_3,x_4,x_5)=(0,f_2,f_3,0,f_5)(l a_1+k,0,l a_3+k b_3, l,k), \]
where $(l a_1+k,0,l a_3+k b_3, l,k) \in H_{a_1,a_3,1,b_3}$. Each  element $g=(0,f_2,f_3,0,f_5)$, $f_i \in \mathbb R$ for
$i=2,3,5,$  has in $G^U_{(\pm)}$ a unique decomposition as   $g=m\ h$ or equivalently $m=g\ h^{-1}$ with
$m \in \exp {\bf m}$, $h \in H_{a_1,a_3,1,b_3}$ if and only if for all given
$f_2,f_3,f_5,a_1,a_3,b_3 \in \mathbb R$ the following system of equations
\begin{gather}
-l a_1-k=\frac{k_1 \bsin f_2}{f_2},\quad k_3=f_3-l a_3-k b_3,
 \quad l=-\varepsilon \frac{k_1 (\bcos f_2-1)}{f_2},\nonumber \\
-k+f_5+f_2(k_3-f_3)=\tfrac{1}{2} f_2 k_3, k_2=f_2 \tag{$*$} \end{gather}
has a unique solution  $(k_1,k_2,k_3,k,l) \in \mathbb R^5$.

In the group $G^U_{(-)}$ we find
\begin{gather}
k_2=f_2,\quad  k_1=\tfrac{f_2 (-2 f_5+ f_2 f_3)}{\tilde{n}}, \nonumber \\
k_3=\tfrac{2[(\cos f_2 -1)(f_3 a_1-f_3 a_3 f_2+f_3b_3f_2a_1+a_3 f_5-b_3 a_1 f_5)+\sin f_2 (f_3+f_3 b_3 f_2-b_3 f_5)]}
 {\tilde{n}}, \nonumber \\
k=\tfrac{ (2 f_5- f_2 f_3)[\sin f_2+a_1(\cos f_2 -1)]}{\tilde{n}}, \quad
 l=\tfrac{(\cos f_2 -1) (-2 f_5+ f_2 f_3)}{\tilde{n}}, \tag{1} \end{gather}
where $\tilde{n}=(\cos f_2 -1)(2 a_1-a_3 f_2+b_3 f_2 a_1)+(2+b_3 f_2)\sin f_2 $.

In  $G^U_{(+)}$  the system $(\ast )$ has the following solution:
\begin{gather}
k_2=f_2,\quad  k_1=\tfrac{2 f_2 e^{f_2} (-2 f_5+ f_2 f_3)}{(e^{f_2}-1)n}, \nonumber \\
k_3=\tfrac{2[(e^{f_2} -1)(-f_3 a_1+f_3 a_3 f_2-f_3b_3f_2a_1-a_3 f_5+b_3 a_1 f_5)
 +(e^{f_2}+1) (f_3+f_3 b_3 f_2-b_3 f_5)]}{n}, \nonumber \\
k=\tfrac{ (-2 f_5+ f_2 f_3)(a_1 e^{f_2}-e^{f_2}-a_1-1)}{n}, \quad l=\tfrac{(e^{f_2}-1)(2f_5-f_2 f_3)}{n},\tag{2}\end{gather}
where $n=(1-e^{f_2})(2 a_1-a_3 f_2+b_3 f_2 a_1)+(e^{f_2}+1) (2+b_3 f_2)$.

The solution (1) respectively the solution (2) is unique  if and only if $\tilde{n} \neq 0$ respectively $n \neq 0$. If
for a value $f_2$ one has $n(f_2)=0$ respectively $n'(f_2)=0$ then the coset $(0,f_2,f_3,0,f_5)  H_{a_1,a_3,1,b_3}$
contains no element of $\exp {\bf m}$.

Considering $f_2$ as a variable $x$ for the function $\tilde{n}(f_2)=\tilde{n}(x)$ one has
$\tilde{n}(x)=0$ if and only if  $a_3(x)=\big( \frac{2}{x}+b_3\big) \big( a_1+\frac{\sin x}{\cos x -1} \big)$,
where  $a_1,b_3 \in \mathbb R$ and $x \in \mathbb R \backslash \{2 \pi l \}$, $l \in \mathbb Z$. For all $a_1 \in\mathbb R$
the function $h(x):= a_1+\frac{\sin x}{\cos x -1}$  has period $2 \pi$.  It  is continuous and  strictly
increasing on the intervals $(2 \pi l, 2 \pi +2 \pi l )$, $l \in \mathbb Z$ such that
 $\lim _{x \searrow 2 \pi l} h(x)=-\infty$ and $\lim _{x \nearrow 2 \pi+2 \pi l} h(x)=\infty$.
The function $ \frac{2}{x}+b_3 $ is  for $b_3 \le -\frac{2}{3 \pi}$ continuous and negative in  $(4 \pi,6 \pi)$ and for
$b_3 > -\frac{2}{3 \pi}$ it is continuous and positive in  $(0,2 \pi)$. Hence  the restriction of the function $a_3(x)$ to
$(4 \pi, 6 \pi)$ respectively to  $(0,2 \pi)$  takes all real numbers as values. This means that for all given $a_1,a_3,b_3$
there is a value $p  \in \mathbb R \backslash \{2 \pi l \}$, $l \in \mathbb Z$ such that $\tilde{n}(p)=0$.

Replacing $f_2$ by the variable $x$ we investigate the function $n(f_2)=n(x)$. We have $n(0)=4$. We seek for
$p \in \mathbb R \backslash \{ 0 \}$ with  $n(p)=0$. Since $n(x)$ is continuous it is enough to prove that there is
$x \in \mathbb R \backslash \{ 0 \}$ with $n(x)<0$. This happens  for the following triples
$$
\begin{array}{llll}
\hbox{a)} & (b_3=0,\ a_3=0,\ a_1 \notin [ -1,1])\quad & \hbox{b)} & (b_3=0, a_3<0,\ a_1 \in \mathbb R)\\[2mm]
\hbox{c)} & (b_3 \in \mathbb R \backslash \{ 0 \},\ a_3 \le 0,\ a_1 \in \mathbb R) &
\hbox{d)} &  \Big(b_3<0,\ a_3>0,\ a_1 < \dfrac{a_3}{b_3}+1\Big) \\[2mm]
\hbox{e)} & \Big(b_3>0,\ a_3>0,\ a_1 > \dfrac{a_3}{b_3}-1\Big). & & \end{array}$$
Namely, in the case a)  $\lim _{x \to -\infty} \frac{n(x)}{e^x+1}<0$ for $a_1<-1$ and
$\lim _{x \to \infty} \frac{n(x)}{e^x+1}<0$ for $a_1>1$. In the cases b) and e) we have
$\lim _{x \to -\infty} \frac{n(x)}{e^x+1}=-\infty$ and in the case d) one obtains
$\lim _{x \to \infty} \frac{n(x)}{e^x+1}=-\infty$. Moreover, in the case c) one has $n(-\frac{2}{b_3}) \le 0$.
Thus for the above triples $(a_1,a_3,b_3)$ there is $p \in \mathbb R \backslash \{ 0 \}$ such that $n(p)=0$.

Let $\sigma :G^U_{(\pm )}/H_{a_1,a_3,1,b_3} \to G^U_{(\pm )}$ be a section belonging to a differentiable Bol loop $L$
with dimension $3$. If $\sigma (G^U_{(\pm)}/H_{a_1,a_3,1,b_3})$ contains $\exp{\bf m}$ then any coset
$(0,f_2,0,0,1) H_{a_1,a_3,1,b_3}$, $(f_2 \in \mathbb R)$ should contain precisely one element $s$ of
$\sigma (G^U_{(\pm)}/H_{a_1,a_3,1,b_3})$. For $f_2 \neq p$ we obtain in the case  $G^U_{(-)}$
\[
s=\left( -2 \frac{\sin f_2}{\tilde{n}}, f_2, k_3, \frac{2(\cos  f_2-1)}{\tilde{n}}, \frac{1}{2} f_2 k_3 \right) \]
and in the case  $G^U_{(+)}$
\[
s=\left( \frac{-2 (e^{ f_2}+1)}{n}, f_2, k_3, \frac{-4 e^{f_2}(\cosh  f_2-1)}{(e^{f_2}-1) n},\frac{1}{2} f_2 k_3 \right). \]
Since  $\sigma$ is continuous one has
\[
\sigma ((0,p,0,0,1) H_{a_1,a_3,1,b_3})=\lim _{f_2 \to p} \sigma ((0,f_2,0,0,1) H_{a_1,a_3,1,b_3})= \lim _{f_2 \to p} s. \]
But  $\lim _{f_2 \to p}  \frac{2(\cos  f_2-1)}{\tilde{n}}= \infty$ as well as
$ \lim _{f_2 \to p} \frac{-2 (e^{ f_2}+1)}{n} = \infty$ which are contradictions. Therefore the group $G^U_{(-)}$ cannot
be the group topologically generated by the  left translations of a differentiable $3$-dimensional Bol loop and for the
group  $G^U_{(+)}$ the parameters satisfying the conditions a) till e) are excluded.

\medskip
Now for  $G^U_{(+)}$ it remains to investigate the triples
$$
\begin{array}{llll}
\hbox{(i)} & (b_3=0,\ a_3=0,\ -1<a_1 <1) \quad& \hbox{(ii)} & (b_3=0,\ a_3>0,\ a_1 \in \mathbb R) \\[2mm]
\hbox{(iii)}& \Big(b_3<0,\ a_3>0,\ a_1 > \dfrac{a_3}{b_3}+1\Big) &
 \hbox{(iv)} & \Big(b_3>0,\ a_3>0,\ a_1 < \dfrac{a_3}{b_3}-1\Big)\\[2mm]
\hbox{(v)} & \Big(b_3>0,\ a_3>0,\ a_1 = \dfrac{a_3}{b_3}-1\Big) &
\hbox{(vi)} &  \Big(b_3<0,\ a_3>0,a_1 = \frac{a_3}{b_3}+1\Big).  \end{array}$$

In the  case (i)  the function $n(x)$ is positive. Therefore there is a  connected differentiable $3$-dimensional Bol loop,
which is realized on the factor space $G^U_{(+)}/H_{a_1,0,1,0}$  with  $-1<a_1 <1$.

\medskip
In  the case (ii)  we have
\[
n(x)=e^x(x a_3-2 a_1+2)-x a_3+2 a_1+2 \]
and for the derivations we obtain
\begin{align*}
n'(x)& = e^x(x a_3-2 a_1+2+a_3)-a_3 \\
n''(x)& = e^x(x a_3-2 a_1+2+2a_3) \\
n'''(x)& = e^x(x a_3-2 a_1+2+3a_3). \end{align*}
Since  $n''(x)=0$  only for  $u=\frac{2a_1-2-2a_3}{a_3}$ holds  and $n'''(u)=a_3>0$ the function $n'(x)$
assumes in $u$ its unique minimum.  Moreover, we have
\begin{align*}
& \lim _{x \to \infty} n'(x)=\infty, & & \lim _{x \to - \infty} n'(x)<0, \qquad \text{and} \\
& \lim _{x \to \infty} n(x)=\infty, & &  \lim _{x \to - \infty} n(x)=\infty. \end{align*} Therefore there is only
one value $p$ for which $n'(p)=0$ and in $p$ the function  $n(x)$ achieves its  unique minimum. One obtains
$n'(p)=0$ if and only if  $a_1=\frac{1}{2}(p a_3+2+a_3-\frac{a_3}{e^p})$. Furthermore,
 we have  $n(p)>0$ if and only if  $p=0$ or $0<a_3<\frac{4 e^p}{(e^p-1)^2}$ if  $p \in \mathbb R \backslash \{ 0 \}$.
Thus for the parameters $(a_3,a_1)$ satisfying the properties
$$
0<a_3  \quad  \hbox{and} \quad  a_1=1 $$
or
$$ 0<a_3<\frac{4 e^p}{(e^p-1)^2} \ \hbox{and} \ a_1=\frac{1}{2} \left (p a_3+2+a_3-\frac{a_3}{e^p} \right ) $$
there is a  connected differentiable $3$-dimensional Bol loop corresponding to the pair $(G^U_{(+)},H_{a_1,a_3,1,0})$.

\medskip
In the cases (iii) and (iv) we have
\[
n(x)=(e^{x}+1) (b_3 x+2)+ (1-e^{x})(x b_3 a_1-x a_3+2 a_1) \]
and for the derivations one obtains
\begin{align*}
n'(x)&=e^x(x(b_3+a_3-b_3 a_1)+b_3+a_3-b_3 a_1+2-2 a_1)+b_3+b_3 a_1 -a_3 \\
n''(x)&=e^x(x(b_3+a_3-b_3 a_1)+2b_3+2a_3-2b_3 a_1+2-2 a_1) \\
n'''(x)&=e^x(x(b_3+a_3-b_3 a_1)+3b_3+3a_3-3b_3 a_1+2-2 a_1). \end{align*}
The same arguments as above show that the function $n'(x)$ has only one minimum
in  $\frac{2(b_3 a_1-b_3-a_3+a_1-1)}{b_3+a_3-b_3 a_1}$ and that there exists only
one value $p$ such that  $n'(p)=0$; for this value $p$ the function  $n(x)$ takes its  unique minimum.

We have $n'(p)=0$ if and only if
\[
p=0 \quad \hbox{and} \quad  a_1=b_3+1 \]
or
\[
a_3=\frac{e^p(1+p)(b_3 a_1-b_3)+e^p(2 a_1-2)-b_3-b_3 a_1}{e^p(1+p)-1} \quad \hbox{if}\  p \in \mathbb R \backslash\{0 \}. \]
Putting $a_3$ into the expression of $n(x)$ we obtain the following: For the value $p$ one has  $n(p)>0$
if and only if one of the following  cases is satisfied
\begin{flalign*}
& \text{(I)} \quad && p=0 \ \ \text{and} \ \ a_1=b_3+1 \\
& \text{(II)} \quad && e^p(1+p)-1 <0 \ \text{and}\ p^2 e^p b_3 \,{-}\, a_1(e^{2p} +1)+ e^{2p}+2 p e^p -1 +2 a_1 e^p <0 \\
& \text{(III)} \quad && e^p(1+p)-1 >0 \ \ \text{and} \\
&     &&p^2 e^p b_3 - a_1(e^{2p}+1)+ e^{2p}+2 p e^p -1 +2 a_1 e^p >0.
\end{flalign*}
In the  case  (I) the conditions in (iii)  reduce to\\[1mm]
(iii)\ a) \quad $b_3<0, \ a_1=b_3+1, \ b_3^2<a_3$ \\[1mm]
and from the conditions in (iv) one gets\\[1mm]
(iv)\  b) \quad $b_3>0, \ a_1=b_3+1, \ b_3^2+2 b_3<a_3$. \\[1mm]
In both cases there is a connected differentiable $3$-dimensional Bol loop $L$ realized on the factor space
 $G^U_{(+)}/H_{a_1,a_3,1,b_3}$.

\medskip
Now we discuss the case (II). For the parameters satisfying (iii)  it is equivalent to the following system of
inequalities
\begin{flalign*}
&(\alpha ) \qquad p<0,\ \ b_3<0,\ \ a_1 b_3 < a_3+b_3, \quad  (\beta ) \quad a_3>0,&& \\
&(\gamma ) \qquad b_3<\frac{ a_1(e^{p}-1)^2- e^{2p}-2p e^p +1}{p^2 e^p}, &&\\
&(\delta ) \qquad a_3=\frac{e^p(1+p)(b_3 a_1-b_3)+e^p(2 a_1-2)-b_3-b_3 a_1}{e^p(1+p)-1}.&& \end{flalign*}
Using   $(\delta )$ the condition $(\alpha )$ may replaced by
\begin{flalign*}
(\alpha ') \quad a_1 <1, \quad  e^p(a_1 -1)<b_3 <0, \quad p<0. &&\end{flalign*}
The condition $(\beta )$
is satisfied if and only if
\begin{flalign*}
(\beta ') \quad \varepsilon b_3 <  \varepsilon \frac{e^p(2-2 a_1)}{e^p(1+p)(a_1-1)-(1+a_1)}\quad \text{and}\quad
\varepsilon a_1< \varepsilon \frac{1+e^p(1+p)}{-1+e^p(1+p)} &&\end{flalign*}
with $\varepsilon \in \{ 1,-1 \}$ holds. Since  $p<0$  the condition  $a_1 <1$ gives in  $ (\beta ')$ for $\varepsilon =1$
that $a_1< \frac{1+e^p(1+p)}{-1+e^p(1+p)}$ and for $\varepsilon =-1$ that $ \frac{1+e^p(1+p)}{-1+e^p(1+p)}< a_1 <1$.
Therefore the expression $ \frac{e^p(2-2 a_1)}{e^p(1+p)(a_1-1)-(1+a_1)}$ is positive for $\varepsilon =1$ and
negative for $\varepsilon =-1$.

Let $f(p)$, $l(p,a_1)$ and $k(p,a_1)$ be the following functions
\begin{align*}
f(p)&:= \frac{1+e^p(1+p)}{-1+e^p(1+p)}, \quad l(p,a_1):=e^p( a_1 -1),\\
 k(p,a_1)& := \frac{e^p(2-2 a_1)}{e^p(1+p)(a_1-1)-(1+a_1)}. \end{align*}
Thus for $\varepsilon =1$ the conditions $(\alpha ')$ and  $ (\beta ')$  yield
\begin{itemize}
\item[a)] \quad $l(p,a_1)<b_3<0$ \ \  and \ \ $a_1< f(p)$ \end{itemize}
whereas for $\varepsilon =-1$ the conditions $(\alpha ')$ and  $ (\beta ')$  give
\begin{itemize}
\item[b)] \quad $f(p)<a_1<1$ \ \  and \ \ $k(p,a_1)<b_3<0$\ \ which\  satisfy \ \ $ (\gamma )$. \end{itemize}
The function $n(p,a_1):=\frac{ a_1(e^{p}-1)^2- e^{2p}-2p e^p +1}{p^2 e^p}$ in $(\gamma )$ is non negative if and only if
\begin{itemize}
\item[(A)] \quad $a_1 \ge \dfrac{e^{2p}+2p e^p -1}{(e^p -1)^2}$. \end{itemize}
Denote by $g(p)$ the function $g(p)=\frac{e^{2p}+2p e^p -1}{(e^p -1)^2}$. Using for  all $p<0$ the inequality
\begin{itemize}
\item[(B)] \quad $g(p) < f(p) $ \end{itemize}
one sees that the condition a) holds if and only if  one of the following  systems of inequalities  is satisfied:
\begin{itemize}
\item[c)] \quad $g(p) \le a_1 <f(p)$\ \  and \ \ $l(p,a_1)< b_3<0$,
\item[d)] \quad $a_1 <g(p)$\ \  and \ \  $l(p,a_1)< b_3<n(p,a_1)$ \ \ \hbox{if} \ \ $l(p,a_1)< n(p,a_1)$. \end{itemize}
Because of $ p^2 e^{2p}-(e^p-1)^2<0$ for all $p<0$,  the condition  $l(p,a_1)< n(p,a_1)$ is satisfied if and only if
\[
\frac{ p^2 e^{2p} - e^{2p}-2 p e^p +1}{ p^2 e^{2p}- e^{2p}+2  e^p -1} < a_1. \]
Let $h(p)$ be the function $h(p)=\frac{ p^2 e^{2p} - e^{2p}-2 p e^p +1}{ p^2 e^{2p}- e^{2p}+2  e^p -1}$. Since $h(p)<g(p)$
for  all  $p<0$  the condition d) is satisfied if and only if
\begin{itemize}
\item[e)] \quad $h(p)<a_1 <g(p)$ \ \  and \ \ $l(p,a_1) < b_3 <n(p,a_1)$ \ \ holds. \end{itemize}
Thus for $p<0$ and $b_3<0$ there is a connected differentiable Bol loop $L$ such that the group topologically generated
by its left translations is the group $G^U_{(+)}$ and the stabilizer of $e \in L$ is the subgroup $H_{a_1,a_3,1,b_3}$
if and only if the parameters $a_1,a_3, b_3$ satisfy one of the systems of inequalities  b), c) or e) and
the condition~$(\delta )$.

\smallskip
For the parameters (iv) the case (II) yields the following system of inequalities
\begin{align*}
&(\alpha )\quad   p<0,\ b_3>0,\ a_1 b_3 < a_3-b_3,\qquad   (\beta) \quad a_3>0, \\
&(\gamma ) \quad b_3< n(p,a_1), \qquad
 (\delta) \quad a_3=\frac{e^p(1\,{+}\,p)(b_3 a_1{-}\,b_3)\,{+}\,e^p(2 a_1\,{-}\,2)\,{-}\,b_3\,{-}\,b_3 a_1}{e^p(1+p)-1}.
  \end{align*}
Using   $(\delta )$ the condition $ (\alpha )$ holds if and only if one of
the following cases is satisfied
\begin{flalign*}
&(\alpha ') \quad p<-1,\ \  a_1<1, \ \  0<b_3<\frac{a_1-1}{1+p},&&\\
& (\alpha '') \quad p=-1, \ \  a_1<1,\ \  0<b_3, && \\
& (\alpha ''') \quad -1<p<0,\ \ \max\left\{0,\frac{a_1-1}{1+p}\right \} <b_3. && \end{flalign*}
The condition $(\beta )$ may be replaced by\\[1mm]
$(\beta ') \quad \varepsilon b_3 <\varepsilon  k(p,a_1)\ \ \hbox{and} \ \ \varepsilon a_1< \varepsilon  f(p)$\\[1mm]
with $\varepsilon \in \{ 1,-1 \}$. Denote by $m(p,a_1)$ the function $\frac{a_1-1}{1+p}$. The conditions $ (\alpha ')$ and
$(\beta')$, $(\alpha'')$ and $(\beta')$, $(\alpha''')$ and $(\beta')$ yield for $\varepsilon=1$ the corresponding
conditions
\begin{itemize}
\item[a)] \quad $p<-1$,\ \ $a_1<f(p)$,\ \ $0< b_3< \min\{k(p,a_1),m(p,a_1) \}$,
\item[b)] \quad $p=-1$,\ \ $a_1<-1$,\ \ $0< b_3<k(-1,a_1)$,
\item[c)] \quad $-1<p<0$,\ \ $a_1<f(p)$,\ \ $0< b_3< k(p,a_1)$
\end{itemize}
and  for $\varepsilon =-1$ the  conditions
\begin{itemize}
\item[d)] \quad $p<-1$,\ \ $f(p)<a_1<1$, \ \ $0< b_3< k(p,a_1)$,
\item[e)] \quad $p=-1$,\ \ $a_1<1$,\ \ $0< b_3$,
\item[f)] \quad $-1<p<0$,\ \ $1<a_1$,\ \ $\max\{m(p,a_1),k(p,a_1) \} <b_3$,
\item[g)] \quad $-1<p<0$,\ \ $f(p)<a_1 \le 1$, \ \ $0< b_3$.
\end{itemize}
Now we deal with the condition $(\gamma )$. Using the inequalities $(A)$ and
$(B)$ the conditions a) till g) hold if and only if  the following
conditions in the same order as a) till g) are satisfied:
\begin{itemize}
\item[a')] \quad $p<-1$,\ \ $g(p) \le a_1<f(p)$,\ \ $0< b_3< \min\{k(p,a_1),m(p,a_1),n(p,a_1) \}$,
\item[b')] \quad $p=-1$,\ \ $g(-1) \le a_1<-1$,\ \ $0< b_3<\min \{k(-1,a_1),n(-1,a_1) \}$,
\item[c')] \quad $-1<p<0$,\ \ $g(p) \le a_1<f(p)$,\ \ $0< b_3<\min \{k(p,a_1),n(p,a_1) \}$,
\item[d')] \quad $p<-1$,\ \ $f(p)<a_1<1$, \ \ $0< b_3<\min \{ k(p,a_1),n(p,a_1) \}$,
\item[e')] \quad $p=-1$,\ \ $g(-1) \le a_1<1$,\ \ $0< b_3<n(-1,a_1)$,
\item[f')] \quad $-1<p<0$,\ \ $1<a_1$,\ \ and \ \ $\max\{m(p,a_1),k(p,a_1) \} <b_3 < n(p,a_1)$,
\item[ ]\quad  if \ \ $\max\{m(p,a_1),k(p,a_1) \} <b_3 < n(p,a_1)$,
\item[g')] \quad $-1<p<0$,\ \ $f(p)<a_1 \le 1$, \ \ $0< b_3< n(p,a_1)$. \end{itemize}
Since for $-1<p<0$ and $1<a_1$ one has $k(p,a_1)< m(p,a_1)$ as well as\newline  $(1+p)(e^p-1)^2-p^2 e^p<0$ the
inequality $\max \{m(p,a_1),k(p,a_1) \} <b_3 < n(p,a_1)$ in f') is satisfied if and only if
\[
a_1<\frac{(1+p)(e^{2p}+2 p e^p-1)-p^2 e^p}{(1+p)(e^{2p}-2 e^p+1)-p^2 e^p}. \]
The function $v(p)=\frac{(1+p)(e^{2p}+2 p e^p-1)-p^2 e^p}{(1+p)(e^{2p}-2 e^p+1)-p^2 e^p} $ is  greater than $1$ for
$-1<p<0$. Hence the condition f') is equivalent to\\[1mm]
h') $\quad -1<p<0,\ \ 1<a_1<v(p),\ \  m(p,a_1)<b_3 <n(p,a_1)$. It follows that  for $p<0$ and $b_3>0$ there is a
differentiable Bol loop $L$ defined on the factor space $G^U_{(+)}/H_{a_1,a_3,1,b_3}$ if and only if the
parameters $a_1$, $a_3$, $b_3$ satisfy one of the systems of inequalities  a') till h')  and the condition
$(\delta )$.

\smallskip
Now we discuss the case (III). For (iii)  we obtain  the following  system of inequalities
\begin{flalign*}
&(\alpha ) \qquad p>0,\ \ b_3<0,\ \ a_1 b_3 < a_3+b_3, &&\\
&(\beta ) \qquad a_3>0, \quad \quad (\gamma ) \quad \quad b_3> n(p,a_1), && \\
&(\delta ) \qquad a_3=\frac{e^p(1+p)(b_3 a_1-b_3)+e^p(2 a_1-2)-b_3-b_3 a_1}{e^p(1+p)-1}. && \end{flalign*}
Using $ (\delta )$ the condition $ (\alpha )$ yields\\[1mm]
$(\alpha ')$  \quad $b_3 <\  \hbox{min} \{0, e^p(a_1-1) \}$ \ \ and \ \ $p>0$.\\[1mm]
Furthermore,  $(\beta )$ is satisfied  if and only if\\[1mm]
$(\beta ')$ \quad $\varepsilon b_3 >\varepsilon  k(p,a_1)$
\ \hbox{and} \  $ \varepsilon a_1>  \varepsilon f(p) $ \\[1mm]
with $\varepsilon\in\{1,-1\}$ holds. Since $p>0$ the conditions $(\alpha')$ and $(\beta')$ give for $\varepsilon =1$\\[1mm]
a) \quad $a_1>f(p)$ \ and \  $k(p,a_1)<b_3<0$ \\[1mm]
whereas   for $ \varepsilon =-1$ we obtain  one of the following conditions\\[1mm]
b) \quad $1<a_1<f(p)$\  and \  $b_3<0$\\[1mm]
c) \quad $a_1<1$\ and \ $b_3< \min\{l(p,a_1),k(p,a_1) \}$.\\[1mm]
Since for $a_1<1$ and $p>0$ we have $l(p,a_1) <  k(p,a_1)$  the condition c) yields\\[1mm]
d) \quad $ a_1 <1$ \ \hbox{and} \ $b_3 <l(p,a_1)$.\\[1mm]
Now we investigate the condition  $(\gamma )$. The function $n(p,a_1)$ is non negative if and only if\\[1mm]
(C) \quad $a_1 \ge \frac{e^{2p}+2p e^p -1}{(e^p -1)^2}.$ \\[1mm]
Because of\\[1mm]
(D) \quad $ f(p)< g(p)$ \  for all \ $p>0$  \\[1mm]
the condition a) may be replaced by\\[1mm]
e) \quad $f(p)<a_1<g(p)$ \ and \ $\max  \{k(p,a_1),n(p,a_1) \}<b_3<0$. \\[1mm]
Moreover the condition b) is equivalent to\\[1mm]
f) \quad $1<a_1<f(p)$\  and \  $n(p,a_1)<b_3<0$ \\[1mm]
whereas the condition d) is equivalent to\\[1mm]
g) \quad $ a_1 <1$ \  and \ $n(p,a_1)<b_3 <l(p,a_1)$ \ for \ $n(p,a_1)<l(p,a_1)$. \\[1mm]
Since for  $p>0$ one has
\[
p^2 e^{2p}-(e^p-1)^2>0  \quad \hbox{and} \quad \   h(p) < 1 \]
the relation $n(p,a_1)<l(p,a_1)$ holds if and only if  $h(p)<a_1 $. Using  this  inequality and $h(p)<1$ the condition
g) is equivalent to\\[1mm]
h) \quad $h(p)<a_1<1$ \ and \ $n(p,a_1)<b_3<l(p,a_1)$. \\[1mm]
Thus for  $p>0$ and $b_3<0$ there  exists a  differentiable  Bol loop, which is realized on the factor space
$G^U_{(+)}/H_{a_1,a_3,1,b_3}$ if and only if $a_1$, $a_3$, $b_3$  satisfy one of the systems of inequalities  e),
f) or h) and the condition $(\delta )$.

\smallskip
For the parameters (iv)  the case (III) is equivalent to the following system of inequalities
\begin{flalign*}
&(\alpha ) \qquad p>0,\ \ b_3>0,\ \ a_1 b_3 < a_3-b_3,&& \\
&(\beta ) \qquad a_3>0, \qquad \qquad (\gamma ) \quad \quad b_3>n(p,a_1), & &\\
&(\delta ) \qquad a_3=\frac{e^p(1+p)(b_3 a_1-b_3)+e^p(2 a_1-2)-b_3-b_3 a_1}{e^p(1+p)-1}. && \end{flalign*}
Using $ (\delta )$ the condition $ (\alpha )$ may be replaced by the condition\\[1mm]
$(\alpha ')$  \quad $1<a_1$, \ $0<b_3<m(p,a_1)$ \ and \ $p>0$. \\[1mm]
Furthermore,  the condition $(\beta )$ is satisfied  if and only if\\[1mm]
$(\beta ')$ \quad $ \varepsilon b_3 >\varepsilon  k(p,a_1)$
\ \hbox{and} \  $ \varepsilon a_1>  \varepsilon f(p)$ \\[1mm]
with $\varepsilon\in\{1,-1\}$ holds. Since $p>0$ the conditions $(\alpha')$ and $(\beta')$ give for $\varepsilon=1$\\[1mm]
a)\quad $f(p)<a_1$ \ and \ $0<b_3< m(p,a_1)$ \\[1mm]
and  for $ \varepsilon =-1$\\[1mm]
b) \quad $1 < a_1 < f(p)$\ and \ $0<b_3<k(p,a_1)$.\\[1mm]
Now we deal with the property $(\gamma )$. Using the inequalities (C) and (D) one sees that  the inequalities in b)
satisfy  $(\gamma )$ and that the condition a) holds if and only if one of the following cases is true:\\[1mm]
c) \quad $f(p)< a_1 \le g(p)$ \ and \ $0 < b_3< m(p,a_1)$\\[1mm]
d) \quad $g(p)<a_1 $ \ and \ $n(p,a_1) < b_3 < m(p,a_1)$ \ if \ $n(p,a_1) < m(p,a_1)$. \\[1mm]
Since  $(1+p)(e^p-1)^2-p^2 e^p>0$ for $p>0$ the condition $n(p,a_1) < m(p,a_1)$ is equivalent to $a_1<v(p)$. Moreover, for
$p>0$ one has $g(p) <v(p)$ and the condition d) is satisfied if and only if\\[1mm]
e) \quad $g(p)<a_1<v(p)$ \ and \  $n(p,a_1)< b_3 < m(p,a_1)$. \\[1mm]
Hence for $p>0$ and $b_3>0$  there exists a differentiable  Bol loop $L$ having  $G^U_{(+)}$ as the group
topologically generated by the left translations and the subgroup $H_{a_1,a_3,1,b_3}$ as the stabilizer of
$e \in L$ in $G^U_{(+)}$ if and only if the parameters $a_1$, $a_3$, $b_3$ satisfy one of the
conditions  b), c) or e) and $(\delta )$.

\smallskip
For the parameters (v) we have $n'(p)=0$ if and only if
\[
p=0 \ \text{and} \  \frac{a_3}{b_3}=b_3+2 \ \hbox{or} \ a_3=b_3(p b_3+b_3+2)\quad \hbox{if} \
 p \in \mathbb R \backslash \{ 0 \}.  \]
Hence  $n(p)>0$ if and only if one of the following cases holds true:\\[1mm]
1) \quad $b_3>0$, $a_3=b_3(b_3+2)$, $a_1=\frac{a_3}{b_3} -1$ \ if \  $p=0$\\[1mm]
and\\[1mm]
2) \quad $b_3(p+1-e^p)+2>0$ \ for $p \in \mathbb R \backslash \{ 0 \}$.

\smallskip
For the parameters  in 1) there is a  differentiable  Bol loop $L$ having  $G^U_{(+)}$ as the group topologically
generated by its  left translations and the group $H_{a_1,a_3,1,b_3}$ as the stabilizer  in $G^U_{(+)}$.

\medskip
The case 2) is equivalent to the following system of inequalities\\[1mm]
$(\alpha )$ \ \ $b_3>0$, \ \ $b_3(p+1-e^p)+2>0$, \ \ $(\beta )$ \ \ $a_3>0$, \ \  $a_3=b_3(p b_3+b_3+2)$. \\[1mm]
Because of $p+1-e^p<0$ for all $p \in \mathbb R \backslash \{ 0 \}$ the condition $(\alpha )$ may be replaced by\\[1mm]
$(\alpha ')$ \quad $0<b_3<- \frac{2}{p+1-e^p}$. \\[1mm]
The condition $(\beta )$ is satisfied if and only if one of the following holds:
\begin{flalign*}
&(\beta') \qquad p>-1\ \text{and}\ b_3>-\frac{2}{p+1}, \quad\qquad
 (\beta '') \qquad p=-1 \ \text{and}\ b_3>0, && \\
& (\beta ''') \qquad p<-1\ \text{and}\ b_3<-\frac{2}{p+1}. && \end{flalign*}
Comparing the conditions $(\alpha')$ and $(\beta')$ respectively $(\alpha')$ and $(\beta'')$ we obtain that for
$p \ge -1$ one has $0< b_3< -\frac{2}{p+1-e^p}$. Since  $-\frac{2}{p+1}> -\frac{2}{p+1-e^p}$  for all $p<-1$
holds $(\alpha ')$ and $(\beta ''')$ reduces to $0< b_3< -\frac{2}{p+1-e^p}$. Hence for  $p\in\mathbb R \backslash\{0 \}$
there exists a  differentiable  Bol loop  realized on the factor space $G^U_{(+)}/H_{a_1,a_3,1,b_3}$ if and only if
\[
0< b_3< -\frac{2}{p+1-e^p},\quad a_3=b_3(p b_3+b_3+2),\quad a_1=\frac{a_3}{b_3} -1. \]

For the parameters (vi) we have $n''(x)=-2 e^x a_3 b_3^{-1}>0 $  for all $x \in \mathbb R$. Hence the function
$n'(x)=-2 e^x a_3 b_3^{-1}+2 b_3$ is strongly monotone increasing. Thus $n'(x)=0$ is  satisfied only for  $p=\ln
(b_3^2 a_3^{-1})$   and  $n(p)>0$ if and only if
\[
b_3 \left(\ln \frac{b_3^2}{a_3}-1 \right) +2+ \frac{a_3}{b_3}>0. \]
This condition is necessary and sufficient that a group $H_{a_1,a_3,1,b_3}$
with parameters in (vi) is the stabilizer of a differentiable Bol loop
realized on the factor spaces  $G^U_{(+)}/H_{a_1,a_3,1,b_3}$.

\smallskip
From the above discussion we obtain the main part of the following

\begin{Theo}
Let $L$ be a $3$-dimensional connected differentiable Bol loop corresponding to a solvable Lie triple system  which
is the  direct product of its centre and a non-abelian $2$-dimensional Lie triple system. If the group $G$
topologically generated by the left translations of $L$ is at  least $5$-dimensional then
 $G$ is the $5$-dimensional solvable Lie group  defined  by:
\begin{gather*}
(x_1, x_2, x_3, x_4,x_5) \ast (y_1,y_2,y_3,y_4,y_5)
=(y_1+ x_1 \cosh  y_2 + x_4 \sinh  y_2,\\ y_2+ x_2,y_3+ x_3,
y_4+ x_1 \sinh  y_2 + x_4 \cosh  y_2, y_5+x_5+ x_2 y_3 ). \end{gather*}
Let \\[1mm]
{\rm(a)}  \quad  $H_{a,0,0}=\{ (la+k,0,0,l,k);\ l,k \in \mathbb R \}$,\ $-1 < a<1$, \\[1mm]
{\rm (b)} \quad  $H_{a_1,a_3,0}=\{(la_1+k,0,la_3,l,k);\ l,k \in \mathbb R \}$, $a_3>0$,  such that either \\[1mm]
 $\ {\ }a_1=1$ or $a_3<\frac{4 e^p}{(e^p-1)^2}$ and $a_1=\frac{1}{2}(p a_3+2+a_3-\frac{a_3}{e^p})$ with
$p \in \mathbb R \backslash \{ 0 \}$. \\[1mm]
{\rm (c)}  \quad $H_{a_1,a_3,b_3}=\{(la_1+k,0,la_3+kb_3,l,k);\ l,k \in \mathbb R \}$
such that for the real parameters $a_1,a_3,b_3$ one of the following conditions is satisfied:
\begin{itemize}
\item[$(\alpha )$] \quad $b_3 <0$, $b_3^2<a_3$, $a_1=b_3+1$,
\item[$(\beta )$] \quad $b_3 >0$, $b_3^2+2 b_3 \le a_3$,  $a_1=b_3+1$,
\item[$(\gamma )$] \quad  $b_3<0$, $a_3>0$, $a_1= a_3 b_3^{-1}+1$,
$b_3 (\ln \frac{b_3^2}{a_3} -1)+2+\frac{a_3}{b_3} >0$.
\end{itemize}

Any subgroup in {\rm (a), (b)} and {\rm (c)} is the stabilizer of the identity $e$ of $L$ in~$G$. No loop having  the
stabilizer of $e$  in {\rm (a)} is isotopic to a loop  having  the stabilizer in {\rm (b)}. Moreover, the loops $L_a$ and
$L_b$ corresponding to the stabilizers $H_{a,0,0}$ respectively $H_{b,0,0}$ are isomorphic if and only if $b= \pm a$.
The loops $L_{1,a_3,0}$ and $L_{1,a_3',0}$ corresponding to the stabilizers $H_{1,a_3,0}$ respectively
 $H_{1,a_3',0}$ in {\rm (b)} are isotopic precisely if $a_3=a_3'$. No loop having the stabilizer of $e$  in {\rm (c)} is
isotopic to a loop with  the stabilizer of $e$ in {\rm(a)} as well as  to a loop $L_{1,a_3,0}$. There are
infinitely many non-isotopic loops having stabilizers in {\rm (c)}.

\smallskip
Denote by $f(p)$, $g(p)$, $h(p)$, $k(p,a_1)$, $l(p,a_1)$, $n(p,a_1)$, $m(p,a_1)$ and $v(p)$   the
following  functions of the real variables $p$ and $a_1$:
\begin{align*}
& f(p)=\frac{1+e^p(1+p)}{-1+e^p(1+p)},\quad  g(p)=\frac{e^{2p}+2p e^p -1}{(e^p -1)^2},\quad
h(p)=\frac{ p^2 e^{2p}{-}\,e^{2p}{-}\,2 p e^p {+}\,1}{ p^2 e^{2p} - e^{2p}+2  e^p -1}, \\[1mm]
& k(p,a_1)=\frac{e^p(2\,{-}\,2 a_1)}{e^p(1\,{+}\,p)(a_1{-}\,1)\,{-}\,(1\,{+}\,a_1)}, \quad l(p,a_1)=e^p( a_1{-}\,1),
\quad m(p,a_1)=\frac{a_1{-}1}{1{+}p}\\[1mm]
& n(p,a_1)=\frac{ a_1(e^{p}{-}\,1)^2{-}\, e^{2p}{-}\,2p e^p {+}\,1}{p^2 e^p},
\quad v(p)=\frac{(1+p)(e^{2p}+2 p e^p-1)-p^2 e^p}{(1+p)(e^{2p}-2 e^p+1)-p^2 e^p}. \end{align*}
If a loop $L$ has a stabilizer $H$ of $e$  not contained in {\rm (a), (b)} or  {\rm (c)} then
$H=H_{a_1,a_3,b_3}=\{(la_1+k,0,la_3+kb_3,l,k)$; $l,k \in \mathbb R \}$ and there exists either a real number $p<0$ such
that one of the following conditions is satisfied:
\begin{flalign*}
& \text{\rm (i)} && f(p) < a_1 <1,\   k(p,a_1) < b_3 <0, && \\[2pt]
& \text{\rm (ii)} &&   g(p) \le a_1 <f(p), \  l(p,a_1) <b_3<0,&&  \\[2pt]
& \text{\rm (iii)}  && h(p) <a_1 < g(p),  \  l(p,a_1) <b_3< n(p,a_1), && \\[2pt]
& \text{\rm (iv)}  &&  p<-1,\  g(p) \le a_1<f(p),\  0< b_3< \min\{k(p,a_1),m(p,a_1),n(p,a_1) \} && \\[2pt]
& \text{\rm (v)}  &&  p=-1,\  g(-1) \le a_1<-1,\  0< b_3<\min \{k(-1,a_1),n(-1,a_1) \}, &&\\[2pt]
& \text{\rm (vi)} & & -1<p<0,\  g(p) \le a_1<f(p),\ 0< b_3<\min  \{k(p,a_1),n(p,a_1) \}, &&\\[2pt]
& \text{\rm (vii)} &&  p<-1,\ \ f(p)<a_1<1, \ \ 0< b_3<\min\{ k(p,a_1),n(p,a_1) \},&& \\[2pt]
& \text{\rm (viii)}  &&    p=-1,\ \ g(-1) \le a_1<1,\ \ 0< b_3<n(-1,a_1), &&\\[2pt]
& \text{\rm (ix)}  && -1<p<0,\ \ f(p)<a_1 \le 1, \ \ 0< b_3< n(p,a_1),&& \\[2pt]
& \text{\rm (x)} &&  -1<p<0,\ \ 1<a_1<v(p),\ \  m(p,a_1)<b_3 <n(p,a_1),&& \\[2pt]
& \text{\rm (xi)} &&  0< b_3< -\frac{2}{p+1-e^p},&& \end{flalign*}
or there exists a real number $p>0$ such that one of the
following conditions holds:
\begin{flalign*}
& \text{\rm(xii)} && f(p)< a_1 < g(p), \ \max \left \{ k(p,a_1), n(p,a_1) \right \}< b_3 < 0,&& \\[2pt]
& \text{\rm(xiii)} && 1 < a_1 <f(p), \  n(p,a_1) < b_3 <0,&& \\[2pt]
& \text{\rm(xiv)} && h(p)< a_1< 1, \  n(p,a_1) < b_3 < l(p,a_1), &&\\[2pt]
& \text{\rm(xv)} && 1 < a_1 < f(p),\  0<b_3<k(p,a_1), &&\\[2pt]
& \text{\rm(xvi)} && f(p)< a_1 \le g(p), \ 0 < b_3< m(p,a_1),&& \\[2pt]
& \text{\rm(xvii)} && g(p)<a_1<v(p), \  n(p,a_1)< b_3 < m(p,a_1), &&\\[2pt]
& \text{\rm(xviii)} && 0< b_3< -\frac{2}{p+1-e^p}. &&\end{flalign*}

Moreover, one has $a_3= \frac{e^p(1+p)(b_3 a_1-b_3)+e^p(2 a_1-2)-b_3-b_3 a_1}{e^p(1+p)-1}$ in the cases
{\rm (i)} till {\rm (x)} and  {\rm (xii) till (xvii)}, whereas $a_3=b_3(p b_3+b_3+2)$ and $a_1=\frac{a_3}{b_3} -1$ holds
true in the cases {\rm(xi)} and {\rm(xviii)}.

There are infinitely many non-isotopic loops $L$ having stabilizers $H_{a_1,a_3,b_3}$ such that the parameters $a_1$,
$a_3$ and $b_3$ satisfy one of the conditions {\rm (i)} till {\rm (xviii)}.

No loop for which the parameters $a_1$, $a_3$ and $b_3$ satisfy one of  {\rm (i) till (xviii)} is isotopic to a loop
corresponding to a stabilizer contained in {\rm(a)}. Moreover, no loop for which the parameters $a_1$, $a_3$ and $b_3$
satisfy one of the conditions {\rm (i) till (iii), (x)} and {\rm (xii)} till {\rm (xviii)} is isotopic to a loop having as
stabilizer $H_{1,a_3,0}$ of~{\rm (b)}.
\end{Theo}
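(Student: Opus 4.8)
The statement is obtained by combining the structural reductions and the function-theoretic case analysis carried out above with an isotopy classification, which is the only new ingredient. One first determines $G$. For a differentiable Bol loop of the kind considered the group topologically generated by the left translations has dimension $4$ or $5$, and the $4$-dimensional case is disposed of in Theorem~6; hence a group $G$ of dimension at least $5$ has dimension exactly $5$, so its Lie algebra is a $5$-dimensional enveloping algebra of $\mathbf m^{+}\times\langle e_3\rangle$ or $\mathbf m^{-}\times\langle e_3\rangle$, and by Lemma~5 this algebra is $\mathbf g^U_{(+)}$ or $\mathbf g^U_{(-)}$. The continuity argument already given (the limit of $\frac{2(\cos f_2-1)}{\tilde n}$ along the coset through a zero $p\neq 0$ of $\tilde n$ is infinite) shows that $G^U_{(-)}$ carries no such global Bol loop, so $G=G^U_{(+)}$ with the displayed multiplication.

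Next one reduces the stabiliser. Every $2$-dimensional subalgebra $\mathbf h$ with $\mathbf g^U_{(+)}=\mathbf m\oplus\mathbf h$ and no non-trivial ideal is an $\mathbf h_{a_1,a_3,b_1,b_3}$; the ideals $\langle e_5\rangle$ and $\langle e_4\pm e_1\rangle$ exclude $(b_1,b_3)=(0,0)$ and $(a_1,a_3)=(\pm 1,0)$, Lemma~1 applied with $g=(0,-b_3^{-1},0,0,0)$ excludes $b_1=0$, and the automorphism scaling $e_1,e_4$ by $b_1^{-1}$ normalises $\mathbf h$ to $\mathbf h_{a_1,a_3,1,b_3}$. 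Within the framework of the paper a connected global Bol loop on $G^U_{(+)}/H_{a_1,a_3,1,b_3}$ exists precisely when $\exp\mathbf m$ is a global system of coset representatives of $H_{a_1,a_3,1,b_3}$, which by the explicit solution $(2)$ of the system $(\ast)$ holds precisely when $n(x)\neq 0$ for every $x\in\mathbb R\setminus\{0\}$ (recall $n(0)=4$); indeed a zero $p\neq 0$ of $n$ destroys the continuity of any global section at the coset through $p$. Running through the sign patterns of $(a_1,a_3,b_3)$: in the ranges a)--e) above $n$ attains a negative value and no loop exists; in the remaining ranges (i)--(vi) the signs of $n',n'',n'''$ show that $n'$ has a unique zero $p$, at which $n$ has its unique minimum, and imposing $n(p)>0$ --- together with the elementary inequalities between the auxiliary functions $f,g,h,k,l,m,n,v$ established above and the identity $(\delta)$ relating $a_3$ to $(p,a_1,b_3)$ --- yields exactly the families {\rm(a)},{\rm(b)},{\rm(c)}, on which $n>0$ throughout, and the families {\rm(i)}--{\rm(xviii)}, on which the minimum value $n(p)$ is positive. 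Since here $[\mathbf h,\mathbf m]\not\subseteq\mathbf m$ in general, these loops are typically not left $A$-loops. This part is a bookkeeping of the computations already performed.

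Finally one treats the isotopy and isomorphism statements. By the criteria of Section~2, two loops having the same group $G$ are isotopic iff some inner automorphism of $G$ carries one stabiliser onto the other, and isomorphic (keeping $\sigma(G/H)$) iff some automorphism of $G$ leaving $\mathbf m$ invariant does so. Thus I would determine the subgroup of the automorphism group of $\mathbf g^U_{(+)}$ fixing $\mathbf m$ --- it contains the scalings $e_1,e_4\mapsto ce_1,ce_4$ and $e_3,e_5\mapsto de_3,de_5$, the shear $e_3\mapsto e_3+te_2$, and a suitable involution negating $e_2,e_4,e_5$ and fixing $e_1,e_3$ --- and the action of the conjugations $Ad_g$, $g\in G^U_{(+)}$, on the family $\{\mathbf h_{a_1,a_3,1,b_3}\}$. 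From the explicit formulas one reads off that whether $a_3=0$, and --- among the loops with $a_3>0$ and $a_1=1$ --- the value $a_3$ itself, is an isotopy invariant; this yields the non-isotopy of {\rm(a)} from {\rm(b)}, the criterion ``$L_{1,a_3,0}$ isotopic to $L_{1,a_3'}$$_{,0}$ iff $a_3=a_3'$'', the isomorphism criterion $b=\pm a$ for the loops $L_a,L_b$ of {\rm(a)}, and, with the further invariants distinguishing the families {\rm(i)}--{\rm(xviii)} from {\rm(a)} and (in the stated sub-list) from $L_{1,a_3,0}$, the remaining non-isotopies. For the existence of infinitely many pairwise non-isotopic loops in {\rm(b)}, in {\rm(c)}, and among {\rm(i)}--{\rm(xviii)}, I would exhibit a continuous isotopy invariant (e.g.\ $a_3$, or an appropriate function of $(p,a_1,b_3)$) taking infinitely many values on each family. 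The main obstacle is precisely this last step: proving the non-isotopy assertions for \emph{all} parameter values, and showing that $b=\pm a$ and $a_3=a_3'$ are the \emph{only} coincidences, requires a complete description of the orbits of the inner automorphism group and of the full automorphism group of $G^U_{(+)}$ on the two-parameter family $\{\mathbf h_{a_1,a_3,1,b_3}\}$, which is considerably more delicate than the single-variable analysis of $n$ used in the earlier steps.
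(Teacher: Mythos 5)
Your first two paragraphs match the paper's structure: the identification of $G$ with $G^U_{(+)}$, the normalisation of the stabiliser to $H_{a_1,a_3,1,b_3}$, and the criterion ``global Bol loop exists iff $n(x)\neq 0$ for all $x\neq 0$'' are exactly the content of the discussion preceding the theorem, and the paper's displayed proof indeed begins with ``it remains to prove the assertions concerning the isotopisms.'' So the part of your plan that is carried out is correct and follows the paper's route.

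The gap is in your third paragraph, and it is exactly where the paper does its real work. You assert that the isotopy invariance of $a_3$ and the various non-isotopy statements can be ``read off,'' and then concede that a complete description of the orbits of the (inner) automorphism group on the family $\{\mathbf h_{a_1,a_3,1,b_3}\}$ is the main unresolved obstacle. The paper resolves it by one concrete observation plus one elimination: the subalgebras $\mathbf h_{a_1,a_3,b_3}$ all lie in the $4$-dimensional abelian normal subalgebra $\langle e_1,e_3,e_4,e_5\rangle$, and since $G$ is the semidirect product of the corresponding abelian normal subgroup by $\{(0,x_2,0,0,0)\}$, conjugation by elements of the abelian part acts trivially on that subalgebra; hence two stabiliser algebras are conjugate in $G$ if and only if they are conjugate under some $(0,x_2,0,0,0)$. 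This turns the orbit problem into a system of four scalar equations in the single unknown $x_2$; eliminating $a_3'$, $a_1'$, $b_3'$ shows that conjugacy forces $a_3'=a_3$ and reduces everything to one transcendental equation (the paper's equation (i)) in $x_2$. All the claims you flag as delicate then follow: $a_3$ is an isotopy invariant (separating (a) from (b) and (c), giving the infinitude of isotopy classes, and the criterion $a_3=a_3'$ for the loops $L_{1,a_3,0}$); for $a_3=b_3=0$ equation (i) has exactly the two solutions $x_2=0$ and $x_2=\ln\frac{1-a_1}{1+a_1}$, giving $b=\pm a$ together with an explicit automorphism realising the isomorphism; and for $a_1=1$, $a_3>0$, $b_3=0$ a monotonicity argument shows equation (i) has exactly one non-trivial root, whose image parameters $(a_1',a_3',b_3')$ satisfy none of the listed conditions, which yields the last non-isotopy assertion. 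Without this reduction your plan does not establish any of the ``only if'' directions, so as written the proof is incomplete at precisely the step the theorem's proof is devoted to.
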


\begin{proof}
It remains to prove the assertions concerning the isotopisms between loops having $G$ as the group topologically generated
by the left translations.

The loops $L_{a_1,a_3,b_3}$ and $L_{a_1',a_3',b_3'}$ corresponding to the pairs $(G,H_{a_1,a_3,b_3})$ and
$(G,H_{a_1',a_3',b_3'})$ are isotopic if there exists an element $g {\in} G$ such that
$g^{-1} {\bf h}_{a_1,a_3,b_3} g= {\bf h}_{a_1',a_3',b_3'}$, where ${\bf h}_{a_1,a_3,b_3}$ is the Lie algebra of the
stabilizer $H_{a_1,a_3,b_3}$. The group $G$ is the semidirect product of the $4$-dimensional normal abelian subgroups
\newline $\{(x_1,0,x_3,x_4,x_5);\ x_1,x_3,x_4,x_5 \in \mathbb R\}$ by the $1$-dimensional subgroup\newline
$\{(0,x_2,0,0,0);\ x_2 \in \mathbb R \}$.  Hence  ${\bf h}_{a_1,a_3,b_3}$ and  ${\bf h}_{a_1',a_3',b_3'}$ are conjugate
if and only if they are conjugate under an element $(0,x_2,0,0,0) \in G$. This is the case  if and only if there exists
$x_2 \in \mathbb R$ such that the following system (I) of equations
\begin{align*}
& -a_3+(a_1 a_3'+b_3'-a_1 b_3' a_1')  \sinh x_2+(a_3'+a_1 b_3'-a_1' b_3') \cosh x_2=0 \tag{1} \\
&(a_3'-a_1' b_3')  \sinh x_2 -b_3 +b_3' \cosh x_2=0 \tag{2} \\
&(a_1 -a_1'+a_3' x_2- a_1' b_3' x_2+a_1 b_3' x_2)  \cosh x_2  \\
&\quad\qquad +(1 -a_1 a_1'+  a_3' a_1 x_2+b_3' x_2 -a_1 a_1' b_3' x_2)  \sinh x_2=0 \tag{3}\\
& (1+b_3' x_2) \cosh x_2 -1 +(a_3' x_2-a_1' -a_1' b_3' x_2)\sinh x_2 =0 \tag{4} \end{align*}
has a solution. From the equation (2) we obtain that for $\sinh x_2 \neq 0$
\[
a_3'=\frac{b_3-b_3' \cosh x_2+a_1' b_3' \sinh x_2}{\sinh x_2}. \]
Putting this expression into  the equations (1),  (3) and (4) one obtains
\begin{align*}
& b_3'=-a_3 \sinh x_2-a_1 b_3 \sinh x_2 -b_3 \cosh x_2 \tag{1'} \\
& (a_1- a_1') \cosh x_2 \sinh x_2 +a_2 b_3 x_2 \sinh x_2 -1+a_1 a_1' \\
& \qquad \qquad +x_2 b_3 \cosh x_2 +(1-a_1 a_1') (\cosh x_2)^2-x_2 b_3'=0 \tag{3'} \\
& -1+\cosh x_2-a_1' \sinh x_2+x_2 b_3=0. \tag{4'} \end{align*}
The equation (4') yields for $\sinh x_2 \neq 0$ that
\[
a_1'=\frac{ \cosh x_2 +x_2 b_3 -1}{\sinh x_2}. \]
Using this expression for $a_1'$ the equation (3') reduces to
\begin{gather*}
-1+\cosh x_2- x_2 b_3' +a_1 \sinh x_2=0. \tag{3''} \end{gather*}
If we substitute  for $b_3'$ from the equation (1') in (3'') we see that
the system (I) is solvable if and only if $x_2$ is the solution of the
equation
\begin{gather*}
(a_1 b_3 x-a_3 x-a_1)(e^{2 x} -1)-(e^x-1)^2+b_3 x (e^{2x}+1)=0, \tag{i}
\end{gather*}
the parameters $b_3'$ respectively $a_1'$ satisfies  (1') respectively (4') and $a_3'=a_3$ holds.

The condition $a_3=a_3'$ yields  the  following claims: \\
No loop with stabilizer in (a) can be isotopic to a loop having
the stabilizer of $e$ not in (a). \\
The loops  $L_{1,a_3,0}$  and $L_{1,a_3',0}$ are not isotopic if $a_3 \neq a_3'$. \\
The loops having the stabilizers  $H_{b+1,b^2+1,b}$ and $H_{b'+1,b'^2+1,b'}$ with $b, b' <0$ and  $b \neq b'$ are not
isotopic. \\
Among the loops having the stabilizers $H_{a_1,a_3,b_3}$ such that the parameters $a_1$, $a_3$, $b_3$ satisfy one of the
conditions (i) till (xviii) there are infinitely many corresponding to different values of  $a_3$. Hence there are
infinitely many isotopism classes of such loops.

\smallskip
For $b_3=a_3=0$ and $0 \le a_1 <1$ the equation (i) reduces to
\[
(e^x-1)[(1+e^x)a_1+(e^x-1)]=0. \]
The solutions of this equation  are $x_2=0$ and $x_2=ln \ \frac{1-a_1}{1+a_1}$. Therefore the loop $L_{a_1}$ with the
stabilizer $H_{a_1,0,0}$ in (a) is isotopic to the loop $L_{-a_1}$ having the stabilizer $H_{-a_1,0,0}$. Since the
automorphism $\alpha $ of the Lie algebra ${\bf g}$ of $G$ given by
\[
\alpha (e_1)=-e_1, \alpha (e_5)=-e_5, \alpha (e_i)=e_i, i=2,3,4 \]
leaves the subspace $\bf m$ invariant and changes the Lie algebra ${\bf h}_{a_1,0,0}$ to
 ${\bf h}_{-a_1,0,0}$   the loops $L_{a_1}$ and $L_{-a_1}$ are already isomorphic.

\smallskip
For $b_3=0$, $a_1=1$ and $a_3>0$  the equation  (i) reduces to
\begin{gather*}
(e^x-1)[(1+e^x)(x a_3+1)+(e^x-1)]=0. \tag{ii} \end{gather*}
We consider the function
\[
f(y)=(1+e^y)(y a_3+1)+(e^y-1), \  \hbox{where} \  a_3>0. \]
For the derivations of $f(y)$ one has
\begin{align*}
f'(y)& =e^y(y a_3+a_3+2)+a_3, \\
f''(y) &=e^y(y a_3+2a_3+2), \\
f'''(y)&=e^y(y a_3+3a_3+2). \end{align*}
Since  $f''(y)=0$  only for $p=-2-\frac{2}{a_3}$ holds and $f'''(p)>0$, the function $f'(y)$ assumes in $p$ its
unique minimum. The function $f(y)$ is monotone increasing since $f'(p)=a_3(1-e^p)>0$. We have
$\lim _{y \to \infty} f(y)= \infty$ and  $\lim _{y \to -\infty} f(y)= - \infty$. Hence there is only one value $u$ for
which $f(u)=0$. Since $f(y)>0$ for all $y \ge 0$ we obtain that $u<0$ and thus the equation (ii) has precisely two
solutions $x_2=0$ and $x_2=u$. The unique loop isotopic to the loop $L_{1,a_3,0}$ corresponds to the stabilizer
$H_{a_1',a_3',b_3'}$    the parameters $a_1'$, $a_3'$, $b_3'$ of which satisfy
\[
a_3'=a_3>0,\quad  a_1'=\frac{e^u-1}{e^u+1}<0,\quad b_3'=\frac{a_3(1-e^{2u})}{2 e^u}>0. \]
But for such parameters none of the conditions $(\alpha )$, $(\beta )$, $(\gamma )$ in (c) and none of the conditions
(i) till (iii), (x) and (xii) till (xviii) is satisfied.
\end{proof}

\subsection{Bol loops corresponding to a Lie triple system which is a  non-split  extension of  its centre}

Now we treat the Lie triple systems  described in the case {\bf 2 c} in Section 3.

\begin{Lemma}
The universal Lie algebras ${\bf g}^U_{\pm}$ of the Lie triple systems ${\bf m}^{\pm}=\langle e_1,e_2,e_3\rangle$ of type
{\bf 2 c} coincide with the standard enveloping Lie algebras ${\bf g^*}_{(\pm)}$ given in {\bf 2 c}.
\end{Lemma}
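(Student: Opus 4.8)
The plan is to argue exactly as in the proofs of Lemma 3 and Lemma 5. Since ${\bf g}^U_{\pm}={\bf m}^U\oplus[{\bf m}^U,{\bf m}^U]$, I would take $e_1,e_2,e_3$ as a basis of ${\bf m}^U$ and $e_4:=[e_2,e_3]$, $e_5:=[e_3,e_1]$, $e_6:=[e_1,e_2]$ as generators of $[{\bf m}^U,{\bf m}^U]$. Because $\langle e_1\rangle$ is the centre of ${\bf m}^{\pm}$, every triple product of basis vectors in which $e_1$ occurs in any slot vanishes, and together with skew-symmetry in the first two arguments this shows that the only non-zero triple products of basis elements are $(e_2,e_3,e_2)=e_1$ and $(e_2,e_3,e_3)=\pm e_2$. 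Via the imbedding property $(X,Y,Z)^U=[[X^U,Y^U],Z^U]$ these translate in ${\bf g}^U_{\pm}$ into
\[
[e_4,e_2]=e_1,\qquad [e_4,e_3]=\pm e_2,\qquad [e_4,e_1]=0,
\]
together with $[e_5,e_i]=[e_6,e_i]=0$ for $i=1,2,3$; in particular $e_5$ and $e_6$ are central.

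Next I would impose the Jacobi identity. Evaluating it on the triple $e_2,e_3,e_4$ gives
\[
[[e_2,e_3],e_4]+[[e_3,e_4],e_2]+[[e_4,e_2],e_3]=[e_4,e_4]+[\mp e_2,e_2]+[e_1,e_3]=-e_5,
\]
so $e_5=[e_3,e_1]=0$, while evaluating it on the triple $e_1,e_3,e_4$ (and using $e_5=0$, so that $[e_1,e_3]=0$ and the first summand vanishes, as does the third because $[e_4,e_1]=0$) gives
\[
[[e_1,e_3],e_4]+[[e_3,e_4],e_1]+[[e_4,e_1],e_3]=[\mp e_2,e_1]=\pm e_6,
\]
so $e_6=[e_1,e_2]=0$. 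Hence $[{\bf m}^U,{\bf m}^U]=\langle e_4\rangle$ and ${\bf g}^U_{\pm}$ is the $4$-dimensional Lie algebra whose only non-trivial products are $[e_2,e_3]=e_4$, $[e_4,e_2]=e_1$, $[e_4,e_3]=\pm e_2$, i.e.\ precisely the standard enveloping algebra ${\bf g}^*_{(\pm)}$ of {\bf 2 c}. Thus $\dim{\bf g}^U_{\pm}\le 4$; since ${\bf g}^U_{\pm}$ admits an epimorphism onto the $4$-dimensional algebra ${\bf g}^*_{(\pm)}$, the dimension is exactly $4$ and this epimorphism is an isomorphism, which is the assertion.

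The only step that needs real attention is the first one: one must verify carefully that $(e_2,e_3,e_2)$ and $(e_2,e_3,e_3)$ genuinely exhaust the defining relations of ${\bf m}^{\pm}$ of type {\bf 2 c}, so that no further bracket relations among $e_1,\dots,e_6$ are forced before the Jacobi identity is invoked — this is where the centrality of $e_1$ and the alternating and cyclic identities of a Lie triple system are used. Once this bookkeeping is settled, the two Jacobi evaluations above and the dimension comparison with ${\bf g}^*_{(\pm)}$ complete the proof without any further computation.
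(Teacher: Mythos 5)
Your proposal is correct and follows essentially the same route as the paper: one writes down the candidate multiplication table on $\langle e_1,\dots,e_6\rangle$ dictated by the triple-product relations of type \textbf{2 c}, and then observes that the Jacobi identity on the triples $(e_2,e_3,e_4)$ and $(e_1,e_3,e_4)$ (the paper uses $(e_4,e_3,e_1)$, the same cyclic sum up to sign) forces $e_5=e_6=0$, leaving exactly ${\bf g}^*_{(\pm)}$. The only differences are cosmetic: your sign convention $e_5=[e_3,e_1]$ versus the paper's $[e_1,e_3]$, and your added (correct) justifications via the centrality of $e_1$ and the final dimension comparison with the standard imbedding.
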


\begin{proof}
Since for  ${\bf g}^U$ one has ${\bf m}^U \cap [{\bf m}^U, {\bf m}^U]=0$ we may assume that
${\bf m}^U=\langle e_1, e_2, e_3 \rangle$ and that a  basis of $[{\bf m}^U, {\bf m}^U]$ consists of  $e_4:=[e_2,e_3]$,
$e_5:=[e_1,e_3]$ and $e_6:=[e_1,e_2]$. Using the Lie triple system relations given in {\bf 2 c}  we have the following
multiplication:
\[
[e_2,e_3]=e_4, \quad  [e_4,e_2]= e_1, \quad  [e_4,e_3]=\pm e_2, \quad  [e_1,e_3]=e_5, \quad [e_1,e_2]=e_6 \]
and the other products are zero. Moreover,  one has
\begin{align*}
& [[e_2,e_3],e_4]+[[e_3,e_4],e_2]+[[e_4,e_2],e_3] = e_5 \\
& [[e_4,e_3],e_1]+[[e_3,e_1],e_4]+[[e_1,e_4],e_3] = \mp e_6. \end{align*}
Hence the Jacobi identity is satisfied if and only if $[e_1,e_3]=[e_1,e_2]=0$. From this  the assertion follows.
\end{proof}

\noindent
The  Lie groups  $G_{(+)}$ and $G_{(-)}$ corresponding to the Lie algebras
 ${\bf g}^*_{(+)}$ or   ${\bf g}^*_{(-)}$ respectively, are the semidirect products of the
$1$-dimensional Lie group
$$
 C = \left \{ \begin{pmatrix}
1 & 0 & 0 & 0 \\
0 & \bcos t & \bsin t & 0 \\
0 & \epsilon\ \bsin t & \bcos t & 0 \\
0 & 0 & 0 & 1 \end{pmatrix},\quad  t \in \mathbb R  \right \} $$
and the $3$-dimensional nilpotent Lie group
$$
B =\left \{ \begin{pmatrix}
1 & -x_2 & x_4 & x_1 \\
0 & 1 & 0 & x_4 \\
0 & 0 & 1 & x_2 \\
0 & 0 & 0 & 1 \end{pmatrix},\quad  x_1,x_2,x_4 \in \mathbb R  \right \}, $$
where the triple ($\bcos t$, $\bsin t$, $\epsilon$) denotes $(\cosh t, \sinh t, 1)$ in case  $G_{(+)}$ and ($\cos t$, $\sin t$, $-1)$ in case  $G_{(-)}$.
\newline
Denoting  the elements of  $G_{(\pm)}$  by $g(t,x_1,x_2,x_4)$ we see that
the  multiplication in $G_{(\pm)}$
is given by
\begin{align*}
& g(t_1,x_1,x_2,x_4) \cdot g(t_2,y_1,y_2,y_4) \\
&\qquad =g(t_1+t_2, x_1+y_1+ \epsilon y_4( x_2 \bcos t_2 - \epsilon x_4 \bsin t_2)
- \epsilon y_2( x_4 \bcos t_2 -x_2 \bsin t_2), \\
&y_2+x_2 \bcos t_2 - \epsilon x_4 \bsin t_2, y_4-x_2 \bsin t_2 +x_4 \bcos t_2). \end{align*}
A $1$-dimensional subalgebra ${\bf h}$ of ${\bf g}^{\ast }_{(\pm )}$  which complements
${\bf m}=\langle e_1, e_2, e_3 \rangle $, can be written as:
\[
{\bf h}=\langle e_4 + \alpha e_1 + \beta e_2 + \gamma e_3 \rangle \quad \hbox{with} \quad
\alpha , \beta , \gamma \in \mathbb R. \]
Any automorphism $\alpha$ of ${\bf g}^\ast_{(\pm )}$  leaving ${\bf m}=\langle e_1,e_2, e_3\rangle$ invariant is given  by
\[
\alpha (e_1)=\pm a^2 e_1,\quad \alpha (e_2)=\pm \epsilon  a c e_1 +a e_2,\quad
\alpha (e_3)=b e_1 +c e_2 \pm  e_3,\quad  \alpha (e_4)=\pm a e_4, \]
where $a \in \mathbb R \backslash \{ 0 \}$, $b,c \in \mathbb R$, $\epsilon =1$ in the case ${\bf g}^{\ast }_{(+)}$
and $\epsilon =-1$ for  ${\bf g}^{\ast }_{(-)}$. Using  suitable automorphisms of this form we can reduce ${\bf h}$ to one
of the following:
\[{\bf h}_1= \langle e_4 \rangle ,\quad {\bf h}_2= \langle e_4 + e_3\rangle ,\quad
 {\bf h}_{3,y}= \langle e_4 + y  e_2 \rangle , y >0, \quad {\bf h}_4= \langle e_4 +e_1 \rangle . \]
The exponential image of the subspace ${\bf m}$ has the shape
\begin{align*}
\exp {\bf m}& =\exp \{n e_1 + k e_2 +t e_3,  t,n, k \in \mathbb R \} \\
& =\Big \{ g(t,n+ \frac{k^2}{t}-\frac{k^2}{t^2} \bsin t,\frac{k}{t} \bsin t, \frac{k}{t}(1- \bcos t)),\
t,n, k \in \mathbb R\Big \} \end{align*}
(cf.\ \cite{bouetou3} p.\ 11 and p.\ 12) if we identify  ${\bf m}$ with the subspace generated by
$$
\left \langle  \left ( \begin{pmatrix}
0 & 0 & 0 & 0 \\
0 & 0 & t & 0 \\
0 & \epsilon t & 0 & 0 \\
0 & 0 & 0 & 0  \end{pmatrix},  \begin{pmatrix}
0 & -k & 0  & n  \\
0 & 0 & 0 & 0  \\
0 & 0 & 0 & k \\
0 & 0 & 0 & 0  \end{pmatrix} \right) \right \rangle. $$
First we investigate the group  $G_{(-)}$. The element  $g(\frac{\pi}{2},0,1,0) \in G_{(-)}$ conjugates
$\exp  e_4 \in H_1$  to  $\exp(-2 e_1) \in \exp {\bf m}$ and $\exp (e_4+e_1)\in H_4$ to $\exp(-e_2-e_1) \in \exp\ {\bf m}$.
Moreover,  $\exp \pi (e_4+e_3) \in H_2$ is conjugate to  $\exp \pi (e_1+e_3) \in \exp {\bf m}$ under
$g(0,0,-1,0) \in G_{(-)}$ and  $\exp  (e_4+y e_2) \in H_{3,y}$ is for all $y \in \mathbb R \backslash \{ 0 \}$
conjugate to $\exp [( \sin \arc \ctg y)^{-1} e_2] \in \exp {\bf m}$ under  $g(-\arc \ctg y ,0,0,0) \in G_{(-)}$.
Hence there is no $3$-dimensional differentiable Bol loop $L$ such that the
group topologically generated by its left translations is the Lie group $G_{(-)}$ (cf.\ Lemma~1).

\smallskip
Finally we deal with the group  $G_{(+)}$. The element $\exp(e_4+e_3)\in H_2$ is conjugate to
$\exp (e_3-e_1)$ of $\exp {\bf m}$ under  $g(0,0,1,0) \in G_{(+)}$. The element $\exp l(e_4+y e_2) \in H_{3,y}$  with
$l=-\sinh \big(\frac{1}{2} \ln \frac{y-1}{y+1}\big)$  is conjugate to $\exp e_2 \in \exp {\bf m}$ under
$g \big(\frac{1}{2}\ln \frac{y-1}{y+1},0,0,0 \big) \in G_{(+)}$ for all $y>1$. Therefore we may suppose that the
stabilizer of the identity of a Bol loop $L$ is either the Lie group $H_1$ or $H_4$ or  $H_{3,y}$, where $0<y \le 1$.

\smallskip
Each element $g \in G_{(+)}$ can  be represented  uniquely as a product $g=m h$, where  $m \in \exp {\bf m}$ and $h$ is an
element of $H_1$, $H_4$ or $H_{3,y}$ with  $0<y \le 1$ respectively, if and only if
for given $t_1,x_1, x_2,x_4 \in \mathbb R$ the equation
\[
g(t_1,x_1,x_2,x_4)=g\Big(t,n+\frac{k^2}{t}-\frac{k^2}{t^2} \sinh t,\frac{k}{t} \sinh t, \frac{k}{t}(1-\cosh t)\Big)\cdot h\]
is uniquely solvable for
\[
h=g(0,0,0,a) \in H_1,\   h=g(0,a,0,a) \in H_4,\ \hbox{and} \ h= g(0,0,l y,l) \in H_{3,y}. \]
In the case of $H_1$ the unique solution is given by:
\begin{gather*}
t :=t_1, \quad  k:= \frac{x_2}{\frac{\sinh t_1}{t_1}}, \quad a:=x_4 - \frac{x_2 (1-\cosh t_1)}{\sinh t_1}, \\
n:=x_1-x_4 x_2 + \frac{x_2^2 (1-\cosh t_1)}{\sinh t_1}- \frac{x_2^2 (t_1-\sinh t_1)}{\sinh ^2 t_1}. \end{gather*}
In the case of $H_4$ we obtain as unique solution
\begin{gather*}
t:=t_1, \quad  k:= \frac{x_2}{\frac{\sinh t_1}{t_1}}, \quad a:=x_4 - \frac{x_2 (1-\cosh t_1)}{\sinh t_1}, \\
n:=x_1 -(1+x_2) \left[ x_4- \frac{x_2 (1-\cosh t_1)}{\sinh t_1}\right]
  - \frac{x_2^2 (t_1-\sinh t_1)}{\sinh ^2 t_1}. \end{gather*}
Moreover, in the case of  $H_{3,y}$ for  $y \in (0,1]$ the unique solution is given  as follows:
\newline
For $t_1=0$ we have $t=0$, $l=x_4$, $k=x_2-x_4 y$, $n=x_1-x_4(x_2-y x_4)$,
\newline
whereas for $t_1 \neq 0$ we obtain
\begin{gather*}
t=t_1, l= \frac{x_4 \sinh t_1 +x_2 \cosh t_1 -x_2}{\sinh t_1 -y +y \cosh t_1},
 \quad  k= \frac{(x_2 - y x_4) t_1}{\sinh t_1 -y +y \cosh t_1}, \\[2mm]
n=x_1 +  \frac{(\sinh t_1 - t_1)(x_2 -y x_4)^2}{(\sinh t_1 -y +y \cosh t_1)^2}
- \frac{(x_4 \sinh t_1 +x_2 \cosh t_1 -x_2)(x_2 -y x_4)}{(\sinh t_1 -y +y \cosh t_1)}. \end{gather*}
It follows that the group $G_{(+)}$  is the group topologically generated by the left translations of infinitely many
non-isomorphic  differentiable $3$-dimensional Bol loops $L$. Every such loop $L$  has a normal subgroup
$N=\exp\{\lambda e_1,\ \lambda \in \mathbb R \}=\{g(0,\lambda ,0,0),\ \lambda \in \mathbb R \}$ isomorphic to $\mathbb R$
and the factor loop $L/N$  is isomorphic to a loop $L_{\alpha }$  with $\alpha \le -1$ defined in Theorem 23.1 of
\cite{loops} and thus isotopic to the pseudo-euclidean plane loop. Hence $L$  is an  extension of the
group $\mathbb R$ by a loop~$L_{\alpha }$.

The loop $L_1$ having  $H_1$ as  the stabilizer of $e \in L_1$ in $G_{(+)}$ is a  Bruck loop. The loop $L_2$
which is realized on the factor space $G/H_4$ is a left A-loop. The stabilizer $H_1$ is conjugate to  $H_4$ under
$g(0,0,-\frac{1}{2},0) \in G_{(+)}$ and to $H_{3,y}$ under $g(\text{artanh} (-y),0,y,1) \in G_{(+)}$ with $y \in
(0,1)$. Hence the loops corresponding to these stabilizers are isotopic. In contrast to this the loop
corresponding to $H_{3,1}= \{ g(0,0,l,l)$; $ l \in \mathbb R \}$ does not belong to the isotopism class of $L_1$.

These considerations yield  the following

\begin{Theo}
If $L$ is a $3$-dimensional connected differentiable Bol loop corresponding to a Lie triple system, which is a non-split
extension of its centre and a $2$-dimensional non-abelian Lie triple system, then the group $G$ topologically generated
by the left translations of $L$ is the  semidirect product of the normal group $\mathbb R$ and  the $3$-dimensional
non-abelian nilpotent Lie group such that the multiplication of $G$ is given by
\begin{align*}
& g(t_1,x_1,x_2,x_4) \cdot g(t_2,y_1,y_2,y_4) \\
&\quad =g(t_1+t_2, x_1+y_1+  y_4( x_2 {\cosh t_2} -  x_4 {\sinh t_2})-  y_2( x_4 {\cosh t_2} -x_2 {\sinh t_2}), \\
& y_2+x_2 {\cosh t_2} -  x_4 {\sinh t_2}, y_4-x_2 {\sinh t_2} +x_4 {\cosh t_2}). \end{align*}
All loops $L$  are extensions of the Lie group $\mathbb R$ by a loop $L_{\alpha }$ described in  Theorem~23.1
of~\cite{loops} and form precisely two isotopism classes ${\cal C}_1$, ${\cal C}_2$.

All  loops  in  ${\cal C}_1$ are  isomorphic and may be represented by the loop $L$ which has
the group  $H=\{ g(0,0,l,l);\ l \in \mathbb R \}$ as the stabilizer of its identity in $G$.

The class  ${\cal C}_2$ contains (up to isomorphisms) a  Bruck loop $L_1$  corresponding to
$H_1=\{ g(0,0,0,a),\ a \in \mathbb R \}$, a  left A-loop $L_2$  corresponding to $H_2=\{ g(0,a,0,a),\ a \in \mathbb R\}$
and the loops $L_y$ with $y \in (0,1)$ corresponding to the groups $H_{y}=\{ g(0,0,ly,l),\ l \in \mathbb R \}$  as the
stabilizers of the identity.
\end{Theo}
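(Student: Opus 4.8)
The plan is to assemble the statement from the data already collected — the enveloping‑algebra computation of Lemma 8, the list of one‑dimensional complements of $\mathbf{m}$ in $\mathbf{g}^{*}_{(\pm)}$, the explicit unique‑factorization formulas, and the isotopism criterion Theorem 1.11 of \cite{loops} — in four steps: fixing $G$, reducing the admissible stabilizers, reading off the extension structure, and sorting the surviving cases into isotopism and isomorphism classes.

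By Lemma 8 the universal enveloping algebra of a Lie triple system of type \textbf{2 c} coincides with its standard enveloping algebra $\mathbf{g}^{*}_{(\pm)}$, so the group topologically generated by the left translations is one of the two four‑dimensional groups $G_{(\pm)}=B\rtimes C$ written above. Applying Lemma 1 to the conjugating elements $g(\tfrac{\pi}{2},0,1,0)$ and $g(-\arc\ctg y,0,0,0)$ found in the running text shows that each of $\mathbf{h}_1,\mathbf{h}_2,\mathbf{h}_{3,y},\mathbf{h}_4$ is conjugate into $\mathbf{m}$ inside $G_{(-)}$, so $G_{(-)}$ carries no such Bol loop and $G=G_{(+)}$, with exactly the displayed multiplication. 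For $G_{(+)}$, Lemma 1 also deletes $\mathbf{h}_2$ and the $\mathbf{h}_{3,y}$ with $y>1$, while the unique‑factorization formulas for $H_1$, $H_4$ and $H_{3,y}$ ($0<y\le 1$) verified above show that each of these three families yields a global differentiable Bol loop; since every one‑dimensional complement of $\mathbf{m}$ in $\mathbf{g}^{*}_{(+)}$ is carried to one of $\mathbf{h}_1,\mathbf{h}_2,\mathbf{h}_{3,y},\mathbf{h}_4$ by an automorphism fixing $\mathbf{m}$, up to isomorphism every global Bol loop of this type arises from one of these three families.

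The centre $\langle e_1\rangle$ of the Lie triple system is a central ideal of $\mathbf{g}^{*}_{(+)}$ contained in $\mathbf{m}$, hence integrates to a normal subgroup $N=\{g(0,\lambda,0,0)\}\cong\mathbb{R}$ of every such loop $L$ lying inside $\exp\mathbf{m}$; the quotient $G/N$ is the three‑dimensional solvable group with precisely two one‑dimensional ideals, so $L/N$ is a loop on $G/N$ modulo $H/N$, which by Theorem 23.1 of \cite{loops} is an $L_{\alpha}$ with $\alpha\le -1$. Thus every $L$ is an extension of $\mathbb{R}$ by some $L_{\alpha}$ and, by Remark 25.4 of \cite{loops}, is isotopic to an extension of $\mathbb{R}$ by the pseudo‑euclidean plane loop.

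The core of the argument, and the step I expect to be the main obstacle, is the isotopy bookkeeping. By Theorem 1.11 of \cite{loops}, conjugacy of the stabilizer subalgebras in $G$ forces the corresponding loops to be isotopic and, conversely, isotopic loops among ours must have stabilizer subalgebras in a common $\mathrm{Aut}(\mathbf{g}^{*}_{(+)})$‑orbit; since every automorphism fixing $\mathbf{m}$ fixes $\langle e_4\rangle$, the whole question reduces to comparing the orbits of $\langle e_4\rangle$, $\langle e_4+e_1\rangle$ and $\langle e_4+y\,e_2\rangle$. A direct evaluation of the adjoint action gives $\mathrm{Ad}_{g}(e_4)\equiv\cosh t\,e_4-\sinh t\,e_2$ modulo $\langle e_1\rangle$ for $g=g(t,x_1,x_2,x_4)$, so $\langle e_4\rangle$ is $G$‑conjugate to $\langle e_4+e_1\rangle$ and, for $0<y<1$, to $\langle e_4+y\,e_2\rangle$ (choosing $\tanh t=-y$); hence $\mathbf{h}_1$, $\mathbf{h}_4$ and the $\mathbf{h}_{3,y}$ with $0<y<1$ all give isotopic loops, forming one class $\mathcal{C}_2$. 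For $y=1$ the equation $\tanh t=-1$ has no real solution, and a short eigenspace analysis of $\mathrm{ad}_{e_3}$ on the Heisenberg nilradical $\langle e_1,e_2,e_4\rangle$ shows that $\langle e_4+e_2\rangle$ is not even in the $\mathrm{Aut}(\mathbf{g}^{*}_{(+)})$‑orbit of $\langle e_4\rangle$; thus the loop with stabilizer $H_{3,1}=\{g(0,0,l,l)\}$ is neither isotopic nor isomorphic to $L_1$ and constitutes the second class $\mathcal{C}_1$. Combined with the reduction above these are the only two classes: every loop in $\mathcal{C}_1$ is isomorphic to the $H_{3,1}$‑loop, and every loop in $\mathcal{C}_2$ is isomorphic to $L_1$, $L_2$ or some $L_y$ ($0<y<1$), the $L_y$ being pairwise non‑isomorphic since an $\mathbf{m}$‑fixing automorphism only rescales $y$ to $\pm y$. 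Finally $\sigma(e_i)=-e_i$ ($i=1,2,3$), $\sigma(e_4)=e_4$ is an involutory automorphism of $\mathbf{g}^{*}_{(+)}$ with $(-1)$‑eigenspace $\mathbf{m}$ and $(+1)$‑eigenspace $\langle e_4\rangle$, so $L_1$ is a Bruck loop; $[\langle e_4+e_1\rangle,\mathbf{m}]\subseteq\mathbf{m}$ makes $L_2$ a left A‑loop, which is not Bruck because any involution of $\mathbf{g}^{*}_{(+)}$ restricting to $-\mathrm{id}$ on $\mathbf{m}$ must fix $e_4=[e_2,e_3]$ and hence has $(+1)$‑eigenspace $\langle e_4\rangle\neq\langle e_4+e_1\rangle$.
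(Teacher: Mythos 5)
Your proposal is correct and follows the same architecture as the paper's own argument: Lemma~8 to pin down the enveloping algebra, Lemma~1 applied to the explicit conjugations to kill $G_{(-)}$ and to discard $\mathbf{h}_2$ and $\mathbf{h}_{3,y}$ with $y>1$, the unique-factorization computations to certify that $H_1$, $H_4$ and $H_{3,y}$ ($0<y\le 1$) yield global loops, the central ideal $\langle e_1\rangle$ to exhibit the normal subgroup $N\cong\mathbb R$ with $L/N\cong L_\alpha$, and the conjugacy of stabilizers to sort the isotopism classes. The one place where you genuinely diverge is the separation of $\mathcal{C}_1$ from $\mathcal{C}_2$: the paper exhibits the concrete conjugating elements $g(0,0,-\tfrac12,0)$ and $g(\operatorname{artanh}(-y),0,y,1)$ and then merely \emph{asserts} that the loop with stabilizer $H_{3,1}$ lies outside the isotopism class of $L_1$, whereas you derive all of this from the single formula $Ad_g(e_4)\equiv\cosh t\,e_4-\sinh t\,e_2 \pmod{\langle e_1\rangle}$ together with the observation that $e_4+e_2$ is an eigenvector of $\operatorname{ad}_{e_3}$ on $\langle e_1,e_2,e_4\rangle$ while no conjugate of $e_4$ is (equivalently, $\tanh t=-1$ has no real solution), and that every $\mathbf{m}$-preserving automorphism sends $e_4$ to $\pm a e_4$. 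This buys you an actual proof of the non-isotopy claim that the paper leaves implicit, and it also cleanly justifies that the $L_y$, $0<y<1$, are pairwise non-isomorphic. Your closing remarks on the Bruck and left A-loop structure of $L_1$ and $L_2$ agree with the paper's. I see no gap.
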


\section{Bol loops corresponding to the Lie triple system\\ having  trivial centre}

Now we deal with the case {\bf 3} in Section 3.

\begin{Lemma}
The universal Lie algebra ${\bf g}^U$ of the Lie triple system ${\bf m}=\langle e_1, e_2, e_3 \rangle$ of type {\bf 3}
is the standard enveloping Lie algebra ${\bf g^*}$ characterized in {\bf 3} of  Section~3.
\end{Lemma}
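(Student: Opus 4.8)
The plan is to follow the pattern of the proofs of Lemmas~3 and~5. Since ${\bf g}^U={\bf m}^U\oplus[{\bf m}^U,{\bf m}^U]$ and ${\bf m}^U$ is $3$-dimensional, I would take $\{e_1,e_2,e_3\}$ as a basis of ${\bf m}^U$ and $e_4:=[e_2,e_3]$, $e_5:=[e_3,e_1]$, $e_6:=[e_1,e_2]$ as the generators of $[{\bf m}^U,{\bf m}^U]$, so that $\dim{\bf g}^U\le 6$ a priori. Translating the defining relations $[[e_2,e_3],e_3]=e_1$ and $[[e_3,e_1],e_3]=e_2$ of the Lie triple system of type~{\bf 3}, together with the vanishing of every other triple product of basis vectors, into the bracket, one obtains the action of $[{\bf m}^U,{\bf m}^U]$ on ${\bf m}^U$, namely
\[
[e_4,e_3]=e_1,\qquad [e_5,e_3]=e_2,
\]
while $[e_i,e_j]=0$ for all remaining pairs with $i\in\{4,5,6\}$ and $j\in\{1,2,3\}$. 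Using the identity $[[X,Y],[U,V]]=[[[X,Y],U],V]+[U,[[X,Y],V]]$, valid in any Lie algebra, one then checks that $[e_4,e_5]=[e_4,e_6]=[e_5,e_6]=0$, so that $[{\bf m}^U,{\bf m}^U]$ is abelian.

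Next I would impose the Jacobi identity on this putative Lie algebra. The decisive relation comes from the triple $e_2,e_3,e_4$: since
\[
[[e_2,e_3],e_4]+[[e_3,e_4],e_2]+[[e_4,e_2],e_3]=[e_4,e_4]+[-e_1,e_2]+[0,e_3]=-e_6,
\]
the Jacobi identity forces $e_6=[e_1,e_2]=0$. After discarding $e_6$ one is left with the $5$-dimensional bracket given by $[e_2,e_3]=e_4$, $[e_3,e_1]=e_5$, $[e_4,e_3]=e_1$, $[e_5,e_3]=e_2$ and zero otherwise; it then remains to verify that all the remaining Jacobi identities on triples of basis vectors are indeed satisfied, which is a short routine check (every such triple either involves $[{\bf m}^U,{\bf m}^U]$ only, and vanishes by abelianness, or produces only terms lying in $[{\bf m}^U,{\bf m}^U]$ that cancel in pairs).

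Finally, from $[e_4,e_3]=e_1\ne 0$ and $[e_5,e_3]=e_2\ne 0$ one sees that neither $e_4$ nor $e_5$ can vanish, so $\dim{\bf g}^U=5$; equivalently, since the standard enveloping algebra ${\bf g}^*$ of~{\bf 3} is $5$-dimensional and is an epimorphic image of ${\bf g}^U$, the bounds $5\le\dim{\bf g}^U\le 5$ already force the epimorphism ${\bf g}^U\to{\bf g}^*$ to be an isomorphism. Replacing $e_5$ by $-e_5$ turns the above table into the presentation $[e_2,e_3]=e_4$, $[e_4,e_3]=e_1$, $[e_1,e_3]=e_5$, $[e_5,e_3]=-e_2$ of ${\bf g}^*$ given in Section~3, whence ${\bf g}^U={\bf g}^*$. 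I do not expect a genuine obstacle here; the only points demanding care are the sign bookkeeping in the three non-zero brackets and the verification that, apart from $e_6$, no further collapse of $[{\bf m}^U,{\bf m}^U]$ is imposed — the latter being guaranteed by the universality of ${\bf g}^U$ together with the already-known dimension of ${\bf g}^*$.
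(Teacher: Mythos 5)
Your proposal is correct and follows essentially the same route as the paper: introduce $e_4,e_5,e_6$ as the generators of $[{\bf m}^U,{\bf m}^U]$, observe that the Jacobi identity on $e_2,e_3,e_4$ forces $[e_1,e_2]=0$, and conclude that ${\bf g}^U$ collapses to the $5$-dimensional algebra ${\bf g}^*$ (your sign $-e_6$ in the decisive Jacobi sum is in fact the correct one; the paper writes $e_6$, which makes no difference to the conclusion). The extra checks you include (abelianness of $[{\bf m}^U,{\bf m}^U]$, the remaining Jacobi identities, and the dimension-count argument) are sound and merely make explicit what the paper leaves implicit, up to the harmless convention $e_5=[e_3,e_1]$ versus $[e_1,e_3]$.
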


\begin{proof}
Because  of  ${\bf m}^U \cap [{\bf m}^U, {\bf m}^U]=0$ we may assume that ${\bf m}^U\!=\langle e_1, e_2, e_3 \rangle$ and
take for a  basis of $[{\bf m}^U, {\bf m}^U]$ the vectors  $e_4:=[e_2,e_3]$, $e_5:=[e_1,e_3]$ and $e_6:=[e_1,e_2]$.
The relations of the Lie triple system of type {\bf 3} yield  the following multiplication:
\[
[e_2,e_3]=e_4, \quad [e_4,e_3]= e_1, \quad  [e_1,e_3]=e_5, \quad [e_1,e_2]=e_6, \]
whereas the other products are zero. For $e_2$, $e_3$, $e_4$ one has
\[
[[e_2,e_3],e_4]+[[e_3,e_4],e_2]+[[e_4,e_2],e_3]=e_6 \]
and the Jacobi identity is satisfied   if and only if  $[e_1,e_2]=0$. This is the assertion.
\end{proof}

The mapping $\beta $
\begin{align*}
\beta(e_1)& = \frac{1}{2} \sqrt{2} e_1- \frac{1}{2} \sqrt{2} e_4-\frac{1}{2} \sqrt{2} e_2+\frac{1}{2} \sqrt{2} e_5, \\
\beta(e_2)& = \frac{1}{2} \sqrt{2} e_1- \frac{1}{2} \sqrt{2} e_4+\frac{1}{2} \sqrt{2} e_2-\frac{1}{2} \sqrt{2} e_5, \\
\beta(e_3)& = \frac{1}{2} \sqrt{2} e_3, \quad \beta(e_4)=e_1+e_4, \quad \beta(e_5)=-e_2-e_5 \end{align*}
yields an isomorphism of ${\bf g}^*$ onto the Lie algebra ${\bf g}$ defined  by the following non-trivial products:
\[
[e_1,e_3]= e_1 -e_2, \quad [e_2,e_3]=e_1+e_2, \quad [e_4,e_3]=- e_4+e_5, \quad [e_5,e_3]=-e_5-e_4. \]
(We remark, that ${\bf g}$ is isomorphic to the Lie algebra $g_{5,17}$ for $s=-1$, $q=-1$, $p=1$ in
\cite{mubarak} (p.\ 105)). The elements $x e_1+y e_2+z e_3+u e_4+v e_5$ of  ${\bf g}$ can be identify with the matrices
\[
\begin{pmatrix}
0 & y & x & 0 & 0 & 0 \\
0 & z & z &  0 & 0 & 0 \\
0 & -z &  z & 0 & 0 & 0 \\
0 & 0 & 0 & 0 & v & u \\
0 & 0 & 0 & 0 & -z & -z \\
0 & 0 & 0 & 0 & z & -z \end{pmatrix};\quad x,y,z,u,v \in \mathbb R . \]
Then the multiplication in $G$ is determined  by
\begin{align*}
& g(a_1,b_1,c_1,d_1,f_1) g(a_2,b_2,c_2,d_2,f_2) \\
&\qquad =g(a_2+b_1 e^{c_2} \sin{c_2}+a_1 e^{c_2} \cos{c_2}, b_2 +b_1 e^{c_2}  \cos{c_2}-a_1  e^{c_2} \sin{c_2}, \\
& c_1+c_2, d_2 -f_1 e^{-c_2} \sin{c_2}+d_1 e^{-c_2}  \cos{c_2},
f_2+f_1 e^{-c_2}  \cos{c_2}+d_1 e^{-c_2} \sin{c_2}). \end{align*}
The isomorphism $\beta $ maps the Lie triple system $\langle e_1,e_2,e_3 \rangle $ onto the Lie triple system
${\bf m}=\langle e_1-e_4,e_2-e_5,e_3 \rangle$ and one has
\begin{align*}
\exp {\bf m}& =\exp \{n(e_1-e_4)+m(e_2-e_5)+s e_3; n,m,s \in \mathbb R \} \\[1mm]
& =\left \{ g \left( \frac{(n-m)(e^{s} \cos{s}-1)+(n+m)e^{s} \sin{s}}{2 s},\right.\right.\\[1mm]
&\qquad \frac{(n+m)(e^{s} \cos{s}-1)+(m-n) e^{s} \sin{s}}{2 s},s, \\[1mm]
& \qquad \frac{(n-m)(e^{-s} \cos{s}-1)- (m+n) e^{-s} \sin{s}}{2 s}, \\[1mm]
&\qquad \left.\left. \frac{(n+m)(e^{-s}\cos{s}-1)+(n-m)e^{-s} \sin{s}}{2 s} \right),\ n,m,s \in \mathbb R \right \}.
\end{align*}
The $2$-dimensional subalgebras ${\bf h}$ of ${\bf g}$ with the property
${\bf h} \cap \langle e_3 \rangle=\{ 0 \}$ have the following forms:
\begin{align*}
{\bf h}_{a_2,a_4,b_2}& =\langle e_5+a_2 e_2+a_4 e_4, e_1+b_2 e_2 \rangle && \hbox{with}  \  a_2,a_4,b_2 \in \mathbb R, \\
{\bf h}_{a_1,a_4,b_1}&=\langle e_5+a_1 e_1+a_4 e_4, b_1 e_1+e_2 \rangle, && \hbox{where} \ a_1,a_4,b_1 \in \mathbb R, \\
{\bf h}_{a_1,a_2,b_1,b_2}&=\langle e_5+a_1 e_1+a_2 e_2, e_4+b_1 e_1+b_2 e_2 \rangle, &&
\hbox{where} \ a_1,a_2,b_1,b_2 \in \mathbb R. \end{align*}
The automorphism  $\alpha: {\bf g} \to {\bf g}$ given by
\begin{gather*}
\alpha(e_1)=b_2 e_1+e_2,\quad \alpha(e_2)=-e_1+b_2 e_2,\quad \alpha(e_3)=e_3, \\*
\alpha(e_4)=b_2 e_4-e_5,\quad  \alpha(e_5)=e_4+b_2 e_5, \end{gather*}
where $b_2 \in \mathbb R$, and the automorphism $\beta: {\bf g} \to {\bf g}$
determined by
\begin{gather*}
\beta(e_1)=e_1+b_1 e_2,\quad \beta(e_2)=-b_1 e_1+e_2,\quad \beta(e_3)=e_3, \\
\beta(e_4)= e_4-b_1 e_5,\quad  \beta(e_5)=b_1 e_4+e_5 \end{gather*}
where $b_1\in \mathbb R$, leave the subspace ${\bf m}$ invariant. If $b_2\neq a_4$ then $\alpha$ maps
${\bf h}_{a_2,a_4,b_2}$ onto
\[
{\bf h}_{a,b}= \langle e_5+a e_1+b e_4, e_2 \rangle  \quad  \hbox{with} \quad a,b \in \mathbb R \]
and if $b_2=a_4$ then  $\alpha $ reduces ${\bf h}_{a_2,a_4,b_2}$ to
\[
{\bf h}_a= \langle e_4+a e_1, e_2 \rangle   \quad  \hbox{with} \quad a \in \mathbb R. \]
For $b_1 \neq \frac{1}{a_4}$ the automorphism  $\beta $ maps  ${\bf h}_{a_1,a_4,b_1}$ to  ${\bf h}_{a,b}$,
whereas  for  $b_1= \frac{1}{a_4}$ the subalgebras  ${\bf h}_{a_1,a_4,b_1}$ reduce to  ${\bf h}_a$.
Since  ${\bf h}_{a,b} \cap {\bf m}$ is not trivial if  $a=-b$  we may assume  that for ${\bf h}_{a,b}$ one has $a \neq -b$.

For $a_1=a_2=b_1=b_2=0$ the subalgebra ${\bf h}_{0,0,0,0}=\langle e_5, e_4 \rangle$ is an ideal of ${\bf g}$.
Therefore we suppose  that in ${\bf h}_{a_1,a_2,b_1,b_2}$ not all parameters $a_1,a_2,b_1,b_2$ are $0$. Moreover,
$(a_2+1)(1+b_1)-a_1 b_2 \neq 0$, since otherwise ${\bf h}_{a_1,a_2,b_1,b_2} \cap {\bf m} \neq 0$.

The Lie groups  corresponding to  the Lie algebras ${\bf h}_a$, ${\bf h}_{a,b}$, ${\bf h}_{a_1,a_2,b_1,b_2}$ have the forms
\begin{itemize}
\item[a)] \quad $H_a=\exp {\bf h}_a=\{ g(k a, l,0,k,0);\ k,l \in \mathbb R \}$, \ $a \in \mathbb R$
\item[b)] \quad $H_{a,b}=\exp {\bf h}_{a,b}=\{ g(k a, l,0,kb,k);\ k,l \in \mathbb R \}$, \ $a,b \in \mathbb R$, $a \neq b$
\item[c)] \quad
$H_{a_1,a_2,b_1,b_2}=\exp {\bf h}_{a_1,a_2,b_1,b_2}=\{ g(k a_1+l b_1,k a_2+l b_2,0,l,k);\ k,l \in \mathbb R \}$,
\end{itemize}
where  $(a_2+1)(1+b_1)-a_1 b_2 \neq 0$ and  not all $a_1$, $a_2$, $b_1$, $b_2$ are equal $0$.

Each element of $G$ has a unique decomposition as
\begin{align*}
g(x_1,x_2,x_3,x_4,x_5) & =g(y_1,0,y_2,0,y_3) g(k a,l,0,k,0) \ \hbox{in\ the\ case\ a)} \\
g(x_1,x_2,x_3,x_4,x_5)&=g(y_1,0,y_2,y_3,0) g(k a,l,0,k b,k) \ \hbox{in\ the\ case\ b)} \\
g(x_1,x_2,x_3,x_4,x_5)&=g(y_1,y_3,y_2,0,0)
g(k a_1+l b_1, k a_2+l b_2, 0, l,k); l,k \in \mathbb R\} \\
&\quad \ \hbox{in\ the\ case\ c)}. \end{align*}

A differentiable Bol loop $L$  exists precisely if in the case a) every element $g(y_1,0,y_2,0,y_3)$, in the case b) every
element $g(y_1,0,y_2,y_3,0)$ and in the case c) every element $g(y_1,y_3,y_2,0,0)$, $y_i \in \mathbb R$, $i=1,2,3$, can
be written uniquely as a product $g=m h$ or equivalently $m=g h^{-1}$, where $m \in \exp {\bf m}$ and $h$ is a suitable
element of the stabilizer $H_a$, $H_{a,b}$ or $H_{a_1,a_2,b_1,b_2}$ respectively. This  happens if and only if for given
$y_1,y_2,y_3 \in \mathbb R$ the following system of equations
\begin{gather*}
s=y_2,\quad A=\frac{u(e^s \cos{s}-1)+v e^s \sin{s}}{2s}, \quad B=\frac{v(e^s \cos{s}-1)-u e^s \sin{s}}{2s}, \\
C=\frac{u(e^{-s} \cos{s}-1)-v e^{-s} \sin{s}}{2s}, \quad
 D=\frac{v(e^{-s} \cos{s}-1)+u e^{-s} \sin{s}}{2s}, \tag{I} \end{gather*}
with $A=y_1-k a$, $B=-l$, $C=-k$, $D=y_3$ in the case a), \\[2pt]
with $A=y_1-k a$, $B=-l$, $C=y_3-k b$,\ $D=-k$ in the case b) and \\[2pt]
$A=y_1-k a_1-l b_1$,\  $B=y_3-k a_2-l b_2$, $C=-l$,\ $D=-k$, in the case c) \\[2pt]
has a unique solution $(u,v,s,k,l) \in \mathbb R^5$.

Assuming  $y_2 \neq 0$ and putting
$$
\begin{array}{ll}
m_{11} = e^{y_2} \cos{y_2} -1-a (e^{-y_2} \cos{y_2}-1),\qquad & m_{21} = e^{-y_2} \sin{y_2}, \\
m_{12} = e^{y_2} \sin{y_2}+ae^{-y_2} \sin{y_2} , & m_{22} = e^{-y_2} \cos{y_2}-1 \end{array}$$
in the case a),
$$
\begin{array}{ll}
m_{11}  = e^{y_2} \cos{y_2} -1+a e^{-y_2} \sin{y_2}, \quad &
m_{12}  =  e^{y_2} \sin{y_2}-a (e^{-y_2} \cos{y_2}-1), \\
m_{21}  =  e^{-y_2} \cos{y_2} -1- b e^{-y_2} \sin{y_2}, &
m_{22}  =  -e^{-y_2} \sin{y_2} -b (e^{-y_2} \cos{y_2}-1) \end{array}$$
in the case b) and
\begin{align*}
m_{11} & =  e^{y_2} \cos{y_2}+1-a_1 e^{-y_2} \sin{y_2} -b_1 (e^{-y_2} \cos{y_2}-1), \\
m_{12} & =  e^{y_2} \sin{y_2}-a_1(e^{-y_2} \cos{y_2} -1)+b_1 e^{-y_2} \sin{y_2}, \\
m_{21} & =  -e^{y_2} \sin{y_2-a_2 e^{-y_2} \sin{y_2} -b_2 (e^{-y_2} \cos{y_2}-1)}, \\
m_{22} & =  e^{y_2} \cos{y_2}+1-a_2(e^{-y_2} \cos{y_2} -1)+b_2 e^{-y_2} \sin{y_2} \end{align*}
in the case c), we see that the system (I) yields  the following system of linear equations
\begin{gather*}\abovedisplayskip=0pt
m_{11} u+m_{12} v = 2 y_1 y_2 \\
m_{21} u+m_{22} v = 2 y_2 y_3. \tag{II} \end{gather*}
If $y_1=y_3=0$ and $\det{(m_{ij})}  = 0$ $i,j \in \{ 1,2 \}$ then the
system (II) has infinitely many solutions.

The condition $\det{ (m_{ij}) } = 0$ holds if and only if in the case a) the function
\[
f(x)=-(e^{x}+e^{-x}) \cos{x} -a(e^{-2 x}-2 e^{-x} \cos{x}+1)+ 2 \cos^2{x}, \]
in the case b) the function
\begin{align*}
g(x)& =(2 a e^{-2 x}-2 b) \cos^2{x}-(2+2 a b e^{-2 x}) \cos{x} \sin{x} +b e^{x} \cos {x} \\*
&\quad +(b-2 a) e^{-x} \cos{x}+e^{x} \sin{x}+(2 a b +1) e^{-x} \sin{x} +a -a e^{-2x} \end{align*}
and in the case c) the function
\begin{align*}
h(x)&=e^{2x}+e^{-2x}(b_1 a_2-a_1 b_2)+ (e^{x}+e^{-x}) \sin{x}(a_1-b_2) \\
&\quad +e^{x} \cos{x}(a_2+b_1-2)+e^{-x} \cos{x}(2 a_1 b_2-2 a_2 b_1 +b_1+a_2) \\
&\quad +1+(2 b_2-2 a_1) \sin{x} \cos{x}-(2 b_1+2 a_2) \cos^2 {x}+ b_1 a_2-b_2 a_1 \end{align*}
assumes the value $0$.

\smallskip
If $k=\max \{ 100, 2 |a| \}$ then  for  $x=2 \pi k$ and $y= \pi+ 2 \pi k$ we obtain that  $f(x)<0$ and  $f(y)>0$. Hence
in the open  interval $\big (2 \pi k, \pi  +2 \pi k \big )$  there is a value $y_2$ such that $f(y_2)=0$.

\smallskip
For $p_1=\frac{\pi }{2}+2 \pi k$ and $p_2=\frac{3 \pi }{2}+ 2 \pi k$ with  $k=\max\{100,2|a|,4 |a b|\}$ one has $g(p_1)>0$
and  $g(p_2)<0$. Hence  the open interval $(\frac{\pi }{2}+2 \pi k,\frac{3 \pi }{2}+ 2 \pi k)$  contains a value $y_2$
such that $g(y_2)=0$.

Therefore there is no  $3$-dimensional differentiable Bol loop $L$ such that the group topologically generated by its
left translations is the group $G$ and the stabilizer of $e \in L$ in $G$ is a  subgroup $H_a$ or  $H_{a,b}$.

\smallskip
In the case c) one has
\begin{itemize}
\item[a)] \quad $\lim _{x \to + \infty} h(x)=+ \infty$,
\item[b)] \quad $\lim _{x \to - \infty} h(x)=- \infty$ if $b_1 a_2-a_1 b_2<0$
\item[c)] \quad $\lim _{x \to - \infty} h(x)= \infty$ if $b_1 a_2-a_1 b_2>0$.
\end{itemize}
The first and second derivative  of $h(x)$ are
\begin{align*}
h'(x)&=2 e^{2x}+(a_1-b_2)[(e^x-e^{-x}) \sin{x}+(e^x+e^{-x}) \cos{x}] \\
 &\quad +(a_2+b_1-2) (e^x \cos{x} -e^x \sin{x})-2e^{-2x}(b_1 a_2-a_1 b_2) \\
 &\quad -(b_1+a_2-2 a_2 b_1+2 a_1 b_2)(e^{-x} \cos{x}+e^{-x} \sin{x}) \\
 &\quad +(2 b_2-2 a_1)(\cos^2{x}-\sin^2{x})+4 \cos{x} \sin{x}(b_1+a_2) \end{align*}
and
\begin{align*}
h''(x)&=4 e^{2x}+4e^{-2x}(b_1 a_2-a_1 b_2)+2(a_1-b_2)(e^x-e^{-x}) \cos{x} \\
 &\quad +2(b_1+a_2-2 a_2 b_1+2 a_1 b_2)e^{-x} \sin{x}-2(a_2+b_1-2) e^x \sin{x} \\
 &\quad +4 (b_1+a_2)(\cos^2{x}-\sin^2{x})  -8(b_2-a_1) \cos{x} \sin{x}. \end{align*}
One obtains $h(0)=h'(0)=0$ and $h''(0)=4+4(b_1+a_2)+4(a_2 b_1-a_1 b_2)$. Since $h''(0) \neq 0$ we have two
possibilities: $h''(0)<0$ or $h''(0)>0$. The function $h(x)$ has in $0$ a maximum or a minimum according as
$h''(0)<0$ or  $h''(0)>0$. Now from the properties a) and b) it follows that for $h''(0)<0$ and for $h''(0)>0$ with
$b_1 a_2-a_1 b_2 < 0$ there is a   value $p \in \mathbb R \backslash \{ 0 \}$  such that $h(p)=0$.

For $b_1 a_2-a_1 b_2= 0$ one has
\begin{align*}
h(x)& =e^{2x}+ (e^{x}+e^{-x}) \sin{x}(a_1-b_2)+e^{x} \cos{x}(a_2+b_1-2) \\
&\quad +e^{-x} \cos{x}(b_1+a_2)+1+(2 b_2-2 a_1) \sin{x} \cos{x}-(2 b_1+2 a_2) \cos^2 {x}. \end{align*}
First we assume that $a_1-b_2 \neq 0$. Then we have $\varepsilon h(p_1) >0$ and $\varepsilon h(p_2) <0$ if
$p_1=- (\frac{\pi}{2}+2 \pi k)$ and $p_2=-(\frac{3 \pi}{2}+2 \pi k )$, where $k=\max \big \{100,\frac{4}{|a_1-b_2|}\big \}$
and $\varepsilon =1$ if $a_1-b_2 <0$, whereas $\varepsilon =-1$ for  $a_1-b_2 >0$. Hence in the open interval
$\big (-\frac{3 \pi}{2}-2 \pi k, -\frac{\pi}{2}-2 \pi k \big )$ the function $h$ assumes $0$.

For $a_1=b_2$ we obtain
\[
h(x)=e^{2x}+(b_1+a_2)(e^{x}+e^{-x}) \cos{x} -2 e^{x} \cos{x}+1 -(2 b_1+2 a_2) \cos^2 {x}. \]
If $p_1=-2 \pi k$ and $p_2=- \pi-2 \pi k$ then we have $\varepsilon h(p_1) >0$
and $\varepsilon h(p_2) <0$, where $k=\max \big \{100,\frac{4 |1-2b_1-2a_2|}{|b_1+a_2|} \big \}$ and
$\varepsilon =1$ or $\varepsilon =-1$ according as  $b_1+a_2>0$ or   $b_1+a_2<0$. Therefore the interval
$(- \pi-2 \pi k, -2 \pi k)$ contains a value $p \in \mathbb R \backslash \{ 0 \}$ such that $h(p)=0$.

It follows that a differentiable  Bol loop $L$ does not  exist if  the parameters $a_1$, $a_2$, $b_1$, $b_2$ satisfy either
\[
1+b_1+a_2+a_2 b_1-a_1 b_2 <0 \]
or
\[
1+b_1+a_2+a_2 b_1-a_1 b_2 >0 \quad  \hbox{and} \quad  a_2 b_1-a_1 b_2 \le 0 . \]
For $y_2=0=s$ the system (I) reduces to
\begin{gather*}
y_1-k a_1-l b_1=n,\ y_3-k a_2-l b_2=m,\ l =n,\  k =m, \quad  \hbox{with} \  n,m \in \mathbb R. \tag{III} \end{gather*}
Since for the parameters $a_1$, $a_2$, $b_1$, $b_2$ one has $(a_2+1)(1+b_1)-a_1 b_2 \neq 0$ the system (III)  has
precisely one solution for all $y_1,y_3 \in \mathbb R$. Namely, if $b_1 \neq -1$ we obtain
\[
l=n=\frac{y_1-m a_1}{1+b_1}, \quad  k=m=\frac{y_3(1+b_1)-f_1 b_2}{(a_2+1)(1+b_1)-a_1 b_2} , \]
whereas for $b_1=-1$ one has $ a_1 b_2 \neq 0$ and
\[
k=m=\frac{y_1}{a_1},\quad l=n=\frac{y_3 a_1-y_1 a_2-y_1}{b_2 a_1}. \]
The above discussion  yields the following

\begin{Theo}
If $L$ is a $3$-dimensional  connected differentiable Bol loop corresponding to a Lie triple system which has trivial
centre, then the group topologically generated by its left translations is the $5$-dimensional Lie group $G$ the
multiplication of which is given by
\begin{align*}
& g(a_1,b_1,c_1,d_1,f_1) g(a_2,b_2,c_2,d_2,f_2)  \\
&\qquad =g(a_2+b_1 e^{c_2} \sin{c_2}+a_1 e^{c_2} \cos{c_2}, b_2 +b_1 e^{c_2}  \cos{c_2}-a_1  e^{c_2} \sin{c_2}, \\
& c_1+c_2,d_2-f_1 e^{-c_2}\sin{c_2}+d_1 e^{-c_2} \cos{c_2},f_2+f_1 e^{-c_2} \cos{c_2}+d_1e^{-c_2} \sin{c_2}). \end{align*}
Moreover, the stabilizer of the identity of $L$ in $G$ is the subgroup
\[
H_{a_1,a_2,b_1,b_2}=\{ g(k a_1+l b_1, k a_2+l b_2, 0, l, k);\ k,l \in \mathbb R \} \] such that  the parameters
$a_1,a_2,b_1,b_2$ satisfy
\[
1+b_1+a_2+a_2 b_1-a_1 b_2 >0 \quad \hbox{and} \quad  a_2 b_1-a_1 b_2>0 \]
and the function
\begin{align*}
h(x) & =e^{2x}+e^{-2x}(b_1 a_2-a_1 b_2)+ (e^{x}+e^{-x}) \sin{x}(a_1-b_2) \\
&\quad +e^{x} \cos{x}(a_2+b_1-2)+e^{-x} \cos{x}(2 a_1 b_2-2 a_2 b_1 +b_1+a_2) \\
&\quad +1+(2 b_2-2 a_1) \sin{x} \cos{x}-(2 b_1+2 a_2) \cos^2 {x}+ b_1 a_2-b_2 a_1 \end{align*}
is positive for all $x \in \mathbb R \backslash \{ 0 \}$.
\end{Theo}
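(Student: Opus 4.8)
The plan is to assemble the facts established in this section into a proof in four steps.

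\emph{Step 1: pinning down $G$.} Since the Lie triple system ${\bf m}$ has trivial centre, it is the system of type {\bf 3} of Section~3, and by Lemma~10 its universal enveloping Lie algebra coincides with the (smallest) standard enveloping algebra ${\bf g}^{*}$; by the dimension remarks of Section~2 this forces ${\bf g}^{*}$ to be the only enveloping algebra, so $G$ is $5$-dimensional. Transporting ${\bf g}^{*}$ by the isomorphism $\beta$ produces the explicit Lie algebra ${\bf g}$ displayed above, hence the connected group $G$ with the stated multiplication, and ${\bf m}$ becomes $\langle e_1-e_4,\,e_2-e_5,\,e_3\rangle$ with the exponential image $\exp{\bf m}$ as computed. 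This gives the first assertion.

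\emph{Step 2: reducing the stabilizer and reformulating existence.} Every $2$-dimensional subalgebra ${\bf h}$ with ${\bf h}\cap\langle e_3\rangle=\{0\}$ lies in one of the families ${\bf h}_{a_2,a_4,b_2}$, ${\bf h}_{a_1,a_4,b_1}$, ${\bf h}_{a_1,a_2,b_1,b_2}$, and the automorphisms $\alpha,\beta$ of ${\bf g}$ that fix ${\bf m}$ reduce ${\bf h}$ to one of ${\bf h}_a$, ${\bf h}_{a,b}$ (with $a\neq b$), or ${\bf h}_{a_1,a_2,b_1,b_2}$ (with not all parameters zero and $(a_2+1)(1+b_1)-a_1b_2\neq0$, else ${\bf h}$ is an ideal or meets ${\bf m}$). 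Since isotopic loops have conjugate stabilizers and isomorphic loops automorphism-equivalent ones (Section~2), it suffices to decide for each normal form whether $\exp{\bf m}$ is a global section for $H=\exp{\bf h}$; by Lemma~1 this means every element of the chosen transversal of $H$ in $G$ factors \emph{uniquely} as $g=m\,h$ with $m\in\exp{\bf m}$, $h\in H$. Writing this out gives the system~(I); for $y_2=s\neq0$ it linearises to the $2\times2$ system~(II) in $(u,v)$ with matrix $(m_{ij})$, so unique solvability for all right-hand sides amounts to $\det(m_{ij})\neq0$ for every $x=y_2\in\mathbb R\setminus\{0\}$, the case $y_2=0$ being handled by system~(III), uniquely solvable exactly under the standing condition $(a_2+1)(1+b_1)-a_1b_2\neq0$.

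\emph{Step 3: analysing the determinant.} In cases a) and b), $\det(m_{ij})$ equals $f(x)$, resp.\ $g(x)$; exploiting that the trigonometric factors are $2\pi$-periodic while the exponential factors are unbounded, one exhibits arguments $2\pi k$ and $\pi+2\pi k$ (resp.\ $\tfrac{\pi}{2}+2\pi k$ and $\tfrac{3\pi}{2}+2\pi k$), with $k$ large in terms of the parameters, where the determinant changes sign, so the intermediate value theorem yields a zero at some $x\neq0$ and no global Bol loop exists --- which excludes the stabilizers $H_a$ and $H_{a,b}$. In case c), $\det(m_{ij})$ vanishes exactly where $h(x)$ does; here $h(0)=h'(0)=0$ and
\[
h''(0)=4+4(b_1+a_2)+4(a_2b_1-a_1b_2)=4\bigl((a_2+1)(1+b_1)-a_1b_2\bigr),
\]
while $h(x)\to+\infty$ as $x\to+\infty$, and $h(x)\to-\infty$ as $x\to-\infty$ if $a_2b_1-a_1b_2<0$ whereas $h(x)\to+\infty$ if $a_2b_1-a_1b_2>0$. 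If $h''(0)<0$ then $0$ is a local maximum and $h$ must vanish on $(0,\infty)$; if $h''(0)>0$ but $a_2b_1-a_1b_2<0$ then $h$ must vanish on $(-\infty,0)$; and the degenerate case $a_2b_1-a_1b_2=0$ splits into $a_1\neq b_2$ and $a_1=b_2$, each disposed of by the same sign-change device. Hence a global Bol loop can occur only when $1+b_1+a_2+a_2b_1-a_1b_2>0$ and $a_2b_1-a_1b_2>0$, and then precisely when $h(x)>0$ for all $x\neq0$; conversely in that case systems~(II) and~(III) are uniquely solvable, so $\exp{\bf m}$ is a global section and a loop with stabilizer $H_{a_1,a_2,b_1,b_2}$ exists, which is the remaining assertion.

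I expect the main obstacle to be Step~3: choosing the sign-change arguments correctly in cases a), b) and in the degenerate subcase of c) so the bounded trigonometric terms are dominated by the exponential ones, and carrying out the case split in c) according to the signs of $h''(0)$ and of $a_2b_1-a_1b_2$. The bookkeeping identity $h''(0)=4\bigl((a_2+1)(1+b_1)-a_1b_2\bigr)$, which reconciles the existence criterion with the non-degeneracy of the complement ${\bf h}$, is another place where a slip would be easy.
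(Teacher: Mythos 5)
Your proposal follows the paper's own argument essentially step for step: the same reduction via Lemma~10 and the isomorphism $\beta$ to the explicit $5$-dimensional group, the same normal forms ${\bf h}_a$, ${\bf h}_{a,b}$, ${\bf h}_{a_1,a_2,b_1,b_2}$ for the complement, the same translation of global existence into non-vanishing of $\det(m_{ij})$ via systems (I)--(III), and the same intermediate-value/second-derivative analysis of $f$, $g$ and $h$ (including the degenerate subcases of c)). The identity $h''(0)=4\bigl((a_2+1)(1+b_1)-a_1b_2\bigr)$ you highlight is a correct and useful way to see why $h''(0)\neq 0$, consistent with the paper's computation.
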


\smallskip
There are many differentiable $3$-dimensional Bol loops on the factor space $G/H_{a_1,a_2,b_1,b_2}$.
For instance  choosing  $a_1,a_2,b_1,b_2 \in \mathbb R$ such that
$a_1=b_2$, $a_2=2-b_1$  and  $c=b_1 a_2-a_1 b_2=-(b_1-1)^2-b_2^2+1$ with   $\frac{3}{7}<c \le 1$ the function
\[
h(x)=e^{2x}+(2-2c) e^{-x} \cos{x} +c e^{-2x}+1+c-4 \cos^2{x} \]
of Theorem 11 is positive for all $x \in \mathbb R\backslash \{ 0 \}$. To prove this it is enough to show that the function
\[
k(x)=e^{2x}+(2-2c) e^{-x} \cos{x} +c e^{-2x}+c-3 \]
is positive for all $x \in \mathbb R \backslash \{ 0 \}$. The second derivative
\[
k''(x)=4 e^{2x}+4 c e^{-2x}+4(1-c) e^{-x} \sin{x} \]
is positive  if and only if
\[
4 e^{2x}+4 c e^{-2x}-4(1-c) e^{-x} >0 \]
or
\[
l(x)= e^{4x}+ (c-1) e^{x}+c >0 \quad \hbox{for\ all}\ x \in \mathbb R. \]
For the derivations of  $l(x)$ we obtain
\[
l'(x)=4e^{4x}+ (c-1) e^{x}, \quad l''(x)=16e^{4x}+ (c-1) e^{x}. \]
One has $l'(p)=0$ if and only if  $p=\frac{1}{3} \ln{\frac{1-c}{4}}$. For this value $p$ the function $l(x)$ takes its
unique minimum since $l''(p)=\big (\frac{1-c}{4} \big )^{\frac{1}{3}}(3-3c)>0$. Because of $\frac{3}{7}<c \le 1$ we get
$l(p)=c-3 \big (\frac{1-c}{4} \big )^{\frac{4}{3}} \ge c-\frac{3}{4}(1-c)>0$. It follows $k''(x)>0$ for all
$x \in \mathbb R $ and therefore $k'(x)$ is a strictly monotone increasing function. Since
$k'(0)=0$ the value $0$ is the unique minimum of $k(x)$. Furthermore one has $k(x) \ge 0$ because of
$k(0)=0$ and   $\lim _{x \to -\infty} k(x)=\lim_{x \to + \infty} k(x)= + \infty$.

\smallskip
Let  $L_{a_1,a_2}$ be the Bol loop belonging to the triple $(G,H_{a_1,a_2,2-a_2,a_1}, \exp {\bf m})$, where
$-\frac{4}{7}<-(a_2-1)^2-a_1^2 \le 0$.  Among these  loops  only the loop   $L_{0,1}$ is a left A-loop.
Since there is no element $g \in G$ such that $g^{-1} {\bf h}_{a_1,a_2,2-a_2,a_1} g= {\bf h}_{a_1',a_2',2-a_2',a_1'}$
for two different pairs $(a_1,a_2)$, $(a_1',a_2')$ holds  the loops  $L_{a_1,a_2}$ and $L_{a_1',a_2'}$ are not
 isotopic. Therefore  there are infinitely many non-isotopic Bol loops $L_{a_1,a_2}$.

Author's address: \\
    Mathematisches Institut der Universit\"at Erlangen-N\"urnberg\\
    D-91054 Erlangen, Bismarckstr. 1 $\frac{1}{2}$\\
    Germany, figula@mi.uni-erlangen.de \\

    Department of Mathematics\\
    University of Debrecen\\
    H-4010 Debrecen, P.O. Box 12\\
    Hungary, figula@math.klte.hu


\begin{thebibliography}{37}

\bibitem{bouetou1} T. B. Bouetou, Classification of solvable Lie triple systems of dimension $3$, M. Sc. Thesis, Moscow Friendship of Nations University, Moscow, 1992, Russian. 


\bibitem{bouetou2} T. B. Bouetou, On Bol algebras, Webs and Quasigroups,  Tver State University, 1995, 75--83. 


\bibitem{bouetou3} T. B. Bouetou, On the classification of one class of three-dimensional Bol algebras with solvable
enveloping Lie algebras of small dimension I, arXiv:math. AG/0309122. 

\bibitem{figula} \'A. Figula, $3$-dimensional Bol loops as sections in non-solvable Lie groups, Forum Math. 17, 2005, 
431--460. 

\bibitem{locally} P. M. D. Furness, Locally symmetric structures on
surfaces, Bull. London Math. Soc. 8, 1976, 44--48. 


\bibitem{jacobson} N. Jacobson, Lie algebras, Wiley Interscience
Publishers, New York, 1962. 

\bibitem{jacobson2} N. Jacobson, General representation theory of Jordan algebras, Trans. Amer. Math. Soc. 70, 1951, 
509--530. 

\bibitem{kiechle} H. Kiechle, Theory of K-Loops, Lect. Notes in
Math. 1778, Springer-Verlag, Berlin, 2002. 

\bibitem{structure} W. G. Lister, A structure theory of Lie triple
systems, Trans. Amer. Math. Soc. 72, 1952, 217--242. 

\bibitem{symmetric} O. Loos, Symmetric Spaces, Vol I and Vol II, Benjamin, New York, 1969, 1970. 


\bibitem{quasigroups} P. O. Miheev, L. V. Sabinin, Quasigroups
and Differential Geometry, Chapter XII, Quasigroups and Loops: Theory and
Applications, Sigma Series in Pure Math. 8, Heldermann-Verlag, Berlin, 1990, 357--430. 

\bibitem{morozov} V. V. Morozov, Klassifikation der nilpotenten Lieschen Algebren sechster Ordnung, Izv. Vyssh. Uchebn. Zaved., Mat. 4{\rm (5)}, 1958, 161--171, Russian. 

\bibitem{mubarak1} G. M. Mubarakzjanov, \"Uber aufl\"osbare Lie-algebren, Izv. Vyssh. Uchebn. Zaved., Mat. 1{\rm(32)}, 1963, 114--123, Russian. 


\bibitem{mubarak} G. M. Mubarakzjanov, Klassifikation reeller Strukturen von Lie-Algebren f\"unfter Ordnung, Izv. Vyssh. Uchebn. Zaved., Mat. 3{\rm (34)}, 1963, 99--106, Russian. 

\bibitem{loops} P. T. Nagy, K. Strambach, Loops in group theory and Lie theory, de Gruyter Expositions in Math. 35,  Walter de Gruyter, Co. Berlin, New York, 2002. 

\bibitem{reinsch} M. W. Reinsch, A simple expression for the terms in the Baker -- Campbell -- Hausdorff series, J. Math. Phys. 41, 2000, 2434--2442. 


\end{thebibliography}
\end{document}